\documentclass[12pt]{article}
\usepackage[english]{babel}
\usepackage{amsfonts,amsmath,amsxtra,amsthm,amssymb,latexsym}
\usepackage{mathrsfs}

\textwidth=180mm \textheight=230mm
\hoffset=-20mm \voffset=-25mm
\pagestyle{myheadings}

\newtheorem{thm}{Theorem}[section]
 \newtheorem{cor}[thm]{Corollary}
 \newtheorem{lem}[thm]{Lemma}
 \newtheorem{prop}[thm]{Proposition}
 \theoremstyle{definition}
 \newtheorem{defn}[thm]{Definition}
 \theoremstyle{remark}
 \newtheorem{rem}[thm]{Remark}
 \newtheorem{ex}[thm]{Example}
 \numberwithin{equation}{section}


\newcommand{\limfunc}[1]{\mathop{\rm #1}}
\DeclareMathOperator{\im}{Im}

\DeclareMathOperator{\sgn}{sgn}

\DeclareMathOperator{\loc}{loc}

\DeclareMathOperator{\disc}{disc}
\DeclareMathOperator{\ess}{ess}
\DeclareMathOperator{\ac}{ac}
\DeclareMathOperator{\ran}{ran}
\DeclareMathOperator{\dom}{dom}
\DeclareMathOperator{\supp}{supp}

\def\R{\mathbb R}
\def\C{\mathbb C}
\def\N{\mathbb N}

\def\la{\lambda}
\def\H{\mathcal{H}}
\def\I{\mathcal{I}}
\def\A{\mathcal{A}}
\def\uph{\upharpoonright}
\def\ep{\varepsilon}
\def\wh{\widehat}
\newcommand{\F}{\mathcal{F}}
\newcommand{\Rs}{\mathcal{R}}
\newcommand{\D}{\mathcal{D}}
\newcommand{\Sl}{\mathrm{N}}
\newcommand{\pr}{\mathfrak{P}}
\def\LimInMed{\limfunc{l.i.m.}}
\def\MpmMm{\Phi}

\def\mur{\mu^r}
\def\mul{\mu^l}

\def\aexp{\mathfrak{a}}
\def\lexp{\mathfrak{l}}
\def\Dset{\mathfrak{D}}
\def\Sset{\mathcal{S}}

\def\c{\mathfrak c}

\def\L{\mathscr{L}}
\def\Iroot{\mathscr{S}}

\begin{document}

\title{A functional model, eigenvalues, and finite singular critical points for indefinite Sturm-Liouville operators.}

\author{I. M. Karabash}

\date{}

\maketitle

\begin{abstract}
Eigenvalues in the essential spectrum of a weighted Sturm-Liouville operator are studied under the assumption
that the weight function has one turning point. An abstract approach to the problem is given via a functional model
for indefinite Sturm-Liouville operators. Algebraic multiplicities of eigenvalues are obtained. Also,
operators with finite singular critical points are considered.
\end{abstract}


\quad

MSC-classes: 47E05, 34B24, 34B09 (Primary) 34L10, 47B50 (Secondary)

\quad

Keywords: essential spectrum, discrete spectrum,
eigenvalue, algebraic and geometric multiplicity,  J-self-adjoint operator, indefinite weight function, non-self-adjoint differential operator, singular critical point

\quad

\section{Introduction}

Let $J$ be a signature operator 
in a complex Hilbert space $H$ (i.e., $ J=J^*=J^{-1}$).
Then $J=P_+ - P_- $ and $H=H_+ \oplus H_-$, where $P_\pm $ are the orthogonal
projections onto $H_\pm :=\ker (J \mp I)$.

Recall that a closed symmetric operator $S$ (in a Hilbert space $H$) is said to be
simple if there is no nontrivial reducing subspace in which $S$ is self-adjoint.

This paper is concerned mainly with $J$-self-adjoint operators $T$ such that
$T_{\min}:=T \cap T^*$ is a simple densely defined symmetric
operator in $H$ with the deficiency indices $n_+ (T_{\min}) = n_- (T_{\min})=2 $.
This class includes weighted Sturm-Liouville operators
\begin{gather}
A = \frac 1{r} \left( -\frac{d}{dx} p \frac{d}{dx} +q \right), \quad r, \frac{1}{p}, q \in L^1_{\loc} (a,b) , \quad -\infty \leq a <0 <b \leq + \infty, \label{e Aint}\\
xr(x) >0 \ \text{a.e. on} \ \R, \quad p >0 \ \text{a.e. on} \ \R , \quad q \ \text{is real-valued}, \label{e rint}
\end{gather}
equipped with separated self-adjoint boundary conditions at $a$ and $b$.
This statement is a consequence of the fact that the weight function $r$ has one turning point (i.e.,
the point where $r$ changes sign), see e.g. \cite{KarMMM07} and Section \ref{ss SLModel}. \eqref{e rint} fixes the turning point of $r$ at $0$, and therefore $A$ is $J$-self-adjoint in the weighted space $L^2 \left( (a,b) , \, |r(x)| dx \right)$ with the operator $J$
defined by $(Jf) (x):= (\sgn x)f(x)$.
Note that the case of one turning point of $r$ is principal for applications in kinetic theory 
(see \cite{BP83,B85,GvdMP87} and a short review in \cite[Section 1]{Kar08}).

The eigenvalue problem  for a regular indefinite Sturm-Liouville operator was
studied in a number of papers starting from Hilbert \cite{Hilbert} (see e.g. \cite{M83,B85,AM87,AM89,CL89,FlMon,Z05} and references therein). Till 2005, the spectral properties of singular  differential operators with an indefinite weight were studied mainly under the assumption of quasi J-nonnegativity, for $A$ this means that $\sigma(JA) \cap (-\infty,0)$ is finite, for the definition and basic results see \cite{CL89,FlMon}.
In last decades, big attention have been attracted by the problem of similarity of $A$
to a self-adjoint operator and the close problem of regularity of critical points (see a short review in \cite{KarKosM09}).

In this paper, the problem under consideration  is a  detailed description of the spectrum $\sigma (T)$ of the operator $T$,
of the set of eigenvalues (the point spectrum) $\sigma_p (T)$, and of algebraic and geometric multiplicities of eigenvalues.
In Section \ref{ss FM Js}, after some analysis of the more general case $n_+ (T_{\min}) = n_- (T_{\min}) < \infty$,
we assume the above conditions on $T_{\min}$ and construct a functional model of $T$ based on that of symmetric operators
\cite{DM95,GT00,MM03}. It occurs that, for the operator $A$, the main objects of this model are the spectral measures $d\Sigma_+$ and $d\Sigma_-$
of the classical Titchmarsh-Weyl m-coefficients associated with $A$ on $(a,0)$ and $(0,b)$
(
see Section \ref{ss SLModel} for details). In the abstract case, $d\Sigma_\pm$
are the spectral measures of two abstract Weyl functions $M_\pm$ (see \cite{DM91,DM95} for basic facts) naturally associated with $T$ and the signature operator $J$.

In Section \ref{s point s}, the model is used to find all eigenvalues of $T$ and their algebraic multiplicities in terms of $M_\pm$ and $d\Sigma_\pm$  (all geometric multiplicities are equal 1, the latter is obvious for the operator $A$). In turn, we obtain
 a description of the discrete and essential spectra and of the exceptional case when the resolvent set $\rho(T)$ is empty. For the operator $A$, these abstract results on the spectra of $T$ reduce the eigenvalue problem to
the problem of description of $M_\pm$ and $d\Sigma_\pm$ (or some of their properties) in terms of coefficients $p,q,r$. The latter problem is difficult,
but, fortunately, for some classes of coefficients is important for mathematical physics and is studied enough to get results on spectral properties of $A$
(see Sections \ref{ss nonempty} and \ref{s apl}).

 Non-emptiness  of $\rho(A)$ is nontrivial and essential for the spectral analysis of $A$
(see \cite{AM87,Ming04} and \cite[Problem 3.3]{KarKos08}). In Section \ref{ss nonempty}, the author generalizes slightly non-emptiness results noticed in \cite{KarKr05,KarMMM07,KarKos08}.

A part of this paper was obtained in the author's candidate thesis \cite{KarDis}, announced in the short communication \cite{KarKr05},
and used essentially in \cite{KarMMM07,KarKos08}. Some of these applications, as well as connections with \cite{KarKosM09,CL89,KarTr07} and with the similarity problem, are discussed in Section \ref{s apl}. 


Section \ref{s sing0} 
provides an alternative approach to the examples of $J$-self-adjoint Sturm-Liouville operators with the singular critical point $0$ given in
\cite[Sections 5 and 6]{KarKos08} and \cite[Section 5]{KarKosM09}. A class of operators with the singular critical point $0$
is constructed.
Relationships of the paper \cite{CL89}  with the example of \cite[Sections 6.1]{KarKos08} and with Theorem \ref{t s p} are discussed in Section \ref{s disc}.

The main advance of the method of the present paper is that it provides description of real eigenvalues and their algebraic multiplicities.
The answer is especially nontrivial and has a rich structure in the case of embedded eigenvalues. The interest to the latter problem is partially motivated by
the theory of kinetic equations of the Fokker-Plank (Kolmogorov) type (see references in Section \ref{ss other appl}).
Also we drop completely the conditions of quasi-$J$-positivity and definitizability.

The method of the paper is essentially based on the abstract approach to the theory of extensions of symmetric operators via boundary triplets, e.g. \cite{Koch75,DM91,DHMS00}. 
Some results on eigenvalues of non-self-adjoint extensions of symmetric operators were obtained in \cite{DM91,DM95,D99,Dth} with the use of this abstract approach. 
Relationships of these results with the results of the present paper are indicated in Remarks \ref{r sa} and \ref{r Sterms}.
There is a kindred approach to eigenvalue problem through characteristic functions, we refer the reader to the references in
\cite{DM99}. The characteristic function for the operator $A$ was calculated in \cite[Proposition 3.9]{KarMMM07}, but the analysis
of \cite{KarMMM07} shows that it is difficult to apply this method to eigenvalue problem for the operator $A$.
Connections with definitizability and local definitizability of $A$ and $T$ (see e.g. \cite{Lan82,J03} for basic facts and definitions) are given in
Remarks \ref{r defin} and \ref{r loc def}.


\textbf{Notation.}
Let $H$ and $\H$ be Hilbert spaces with the scalar products $ ( \cdot , \cdot )_{H}$ and $ ( \cdot , \cdot )_{\H}$, respectively.
The domain, kernel, and range of a (linear) operator  $S$ in $H$ is denoted
by $\dom (S)$, $\ker (S)$, and $\ran (S)$, respectively.
If $\Dset$ is a subset of $H$, then $S \Dset$ is the image of $\Dset$, $S \Dset := \{Sh \ : \ h \in \Dset \}$,
and $\overline \Dset$ is the closure of $\Dset$.

The discrete spectrum $\sigma_{\disc} (S)$ of $S$
is the set of isolated
eigenvalues of finite algebraic multiplicity. The essential spectrum is defined
by $\sigma_{\ess} (S):= \sigma (S) \setminus \sigma_{\disc} (S)$.
The continuous spectrum is understood in the sense
\[
\sigma_{c} (S) := \{ \lambda \in \C \setminus \sigma_{p} (S): \ran (S-\lambda) \neq \overline{\ran (S-\lambda)} = H \ \};
\]
$ \Rs_S (\lambda ):=\left( S-\lambda I\right)^{-1} $, $\lambda \in
\rho(S)$, is the resolvent of $S$.
Recall that an eigenvalue $\lambda$ of $S$ is called \emph{semi-simple} if $ \ker (S - \lambda)^2 = \ker (S - \lambda) $, and \emph{simple}
if it is semi-simple and $\dim \ker (S - \lambda) = 1$.
By $\Iroot_{\lambda} (S)$ we denote  the \emph{root subspace} (the algebraic eigensubspace) of $S$ corresponding to the point $\lambda$. That is,
$ \Iroot_{\lambda} (S)$ 
is the closed linear hull of the subspaces $\ker (S-\la )^k$, $k\in \N$.
 
If $S$ is a symmetric operator, $n_\pm (S)$ denote the deficiency indices of $S$ (see the Appendix).

The topological support $\supp d\Sigma$ of a Borel measure
$d\Sigma$ on $\R$ is the smallest closed set $\Sset$ such that
$d\Sigma (\R \setminus \Sset) = 0$; $d\Sigma (\{\la\})$ denotes the measure of point $\la$ (i.e., $d\Sigma (\{\la\}):= \Sigma (\la +0) - \Sigma (\la -0))$ if the measure $d\Sigma$ is determined by a function of bounded variation $\Sigma$.
We denote the indicator function of a set
$\Sset$ by $\chi_{\Sset} ( \cdot )$.
We write $f\in L^1_{\loc} (a,b)$  ($f\in AC_{\loc} (a,b)$) if the function $f$ is Lebesgue integrable
(absolutely continuous) on every closed bounded interval $[a',b'] \subset (a,b)$.

\section{The functional model for indefinite Sturm-Liouville operators with one turning point}

\subsection{Preliminaries; the functional model of a  symmetric operator}
\label{ss FM Sym}

Recall a functional model of symmetric operator
following \cite[Section 5.2]{DM95}, \cite[Section 7]{MM03} (a close version of a functional model can be found in \cite{GT00}).
In this paper, we need only the case of
deficiency indices $(1,1)$.

Let $\Sigma (t) $ be a nondecreasing scalar function satisfying the conditions
       \begin{multline}
\int_\R \frac{1}{1+t^2} d\Sigma (t) < \infty ,
\quad  \int_\R d\Sigma (t) = \infty \ , \\ 
\Sigma (t) = \frac 12 (\Sigma(t-0) + \Sigma (t+0)), \quad
\Sigma (0) = 0.      \label{e S assump}
         \end{multline}
The operator of multiplication $Q_{\Sigma} : f(t) \rightarrow tf(t)$ is self-adjoint  in
$L^2 (\R,d\Sigma (t))$. It is assumed that $Q_{\Sigma}$ is defined on its natural domain 
\[
\dom (Q_{\Sigma})= \{ f \in L^2 (\R,d\Sigma (t)) \, : \,
\int_\R |tf(t)|^2 d\Sigma (t) < \infty \}. 
\]
Consider the following restriction of $Q_{\Sigma}$:
\[
T_\Sigma = Q_\Sigma \upharpoonright \dom ( T_\Sigma), \qquad
\dom ( T_\Sigma) = \{ f \in \dom (Q_\Sigma) : \int_\R f(t)d\Sigma(t) =0\}.
\]
Then $ T_\Sigma$ is a simple densely defined symmetric operator in
$L^2 (\R, d\Sigma (t))$ with deficiency indices (1,1). The adjoint operator
$T_\Sigma^*$ has the form
\begin{multline}
\dom (T_\Sigma^*) = \{ f=f_Q + \c \, \frac{t}{t^2+1} \ : \
f_Q \in \dom (Q_\Sigma), \ \c \in \C \}, \\ 
T_\Sigma^* f = t f_Q - \c \, \frac 1{t^2 +1} ,  \label{e whT*}
\end{multline}
where the constant $\c$ is uniquely determined by the inclusion $f - \c t(t^2+1)^{-1} \in \dom (Q_\Sigma)$ due to the second condition in (\ref{e S assump}).

Let $C$ be a fixed real number.  Define linear mappings  $\Gamma_0^{\Sigma}$, $\Gamma_1^{\Sigma,C}$ from $\dom( T_\Sigma^*)$ onto $\C$ by
\begin{gather}
\Gamma_0^{\Sigma} f = \c , \qquad
\Gamma_1^{\Sigma,C} f = \c \,C +\int_\R f_Q (t) d\Sigma(t), \label{e GammaS} \\
\text{where } \qquad f= f_Q + \c \, \frac{t}{t^2+1} \in \dom( T_\Sigma^*),
\qquad f_Q \in \dom (Q_\Sigma), \qquad \c \in \C.
\notag
\end{gather}
Then $\{ \C, \Gamma_0^{\Sigma} , \Gamma_1^{\Sigma,C}\}$ is a boundary triplet for $T_\Sigma^*$ (see \cite[Proposition 5.2 (3)]{DM95}, basic facts on boundary triplets and abstract Weyl functions are given in the Appendix). The function
\begin{gather} \label{e Msc}
 M_{\Sigma,C} (\lambda) := C+\int_\R \left( \frac 1{t-\lambda} -
\frac t{1+t^2} \right) d\Sigma(t) , \quad \lambda
\in \C \setminus \supp d\Sigma,
\end{gather}
is the corresponding Weyl function of $  T_\Sigma $.

Another way to describe the operator
$ T_\Sigma^*$ is the following (see \cite{DM95}).
Note that the domain 
$ \dom \left( T_\Sigma^* \right) $
consists of the functions $f \in L^2 (\R, d\Sigma(t))$
such that for some constant $\c \in \C $ the function
$ \widetilde f  (t) := tf (t) - \c $ belongs to
$ L^2 (\R, d\Sigma (t))$.
It follows from \eqref{e S assump} that the constant $\c$
is uniquely determined and coincides with the constant $\c$ introduced in (\ref{e whT*}). Therefore,
\begin{gather}
\c = \Gamma_0^{\Sigma} f  \quad \text{and} \quad 
 T_\Sigma^* f =  \widetilde f \label{e hatA*} .
\end{gather}

%

\subsection{The functional model for J-self-adjoint extensions
of symmetric operators}
\label{ss FM Js}

Let $J$ be a signature operator in a Hilbert space $H$, i.e.,
$\ J=J^*=J^{-1}$. Then $J=P_+ - P_- $ and $H=H_+ \oplus H_-$, where $P_\pm $ are the orthogonal
projections onto $H_\pm :=\ker (J \mp I)$.

Let $T$ be a $J$-self-adjoint operator in $H$, i.e.,
the operator $B=JT$ is self-adjoint.
The domains of $T$ and $B$ coincide, we denote them by
\[
 \D := \dom(T) \ (= \dom (B)).
\]
Put
\[
T_{\min} := T \cap T^*, \qquad \D_{\min} := \dom(T_{\min}).
\]
By the definition, the operator $T_{\min}$ is a symmetric operator
and so is
\begin{equation} \label{e defBmin}
B_{\min} := B \upharpoonright \D_{\min} \quad =J T_{\min} .
\end{equation}


Let $\Sigma_+$ and $\Sigma_-$ be nondecreasing scalar functions satisfying
\eqref{e S assump}. Let $C_+$ and $C_-$ be real constants.
Consider the operator
$\widehat A = \widehat A \; \{ \Sigma_+ , C_+ , \Sigma_- , C_- \}$
in $L^2 (\R, d\Sigma_+ ) \oplus L^2 (\R, d\Sigma_-)$ defined by
\begin{equation}
 \widehat A \ \{ \Sigma_+ , C_+ , \Sigma_- , C_- \} =
T_{\Sigma_+}^* \oplus T_{\Sigma_-}^* \upharpoonright
\dom (\widehat A)  ,  \label{e A hat}
\end{equation}
\begin{multline*}
\dom (\widehat A) = \{ \ f=f_+ + f_- \ : \ f_\pm \in \dom (T_{\Sigma_\pm}^*) , \\
  \Gamma_0^{\Sigma_+} f_+ = \Gamma_0^{\Sigma_-} f_- , \
\Gamma_1^{\Sigma_+, C_+} f_+ = \Gamma_1^{\Sigma_-, C_-}  f_- \ \} ,
\end{multline*}
where $ T_{\Sigma_\pm}^*$ are the operators defined in Subsection \ref{ss FM Sym}.

One of main results of this paper is the following theorem.

\begin{thm} \label{p CModel}
Let $J$ be a signature operator in a separable Hilbert space $H$ and let $T$ be a $J$-self-adjoint operator such that
$T_{\min}:=T \cap T^*$ is a simple densely defined symmetric
operator in $H$ with deficiency indices (2,2).
Then there exist nondecreasing scalar
functions $\Sigma_+$, $\Sigma_-$ satisfying \eqref{e S assump} and real constants
$C_+$, $C_-$ such that $T$ is unitarily equivalent to
the operator $\widehat A \ \{ \Sigma_+ , C_+ , \Sigma_- , C_- \}$.
\end{thm}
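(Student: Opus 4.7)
The plan is to use the $J$-structure to decompose $T_{\min}$ into two deficiency-$(1,1)$ pieces, invoke the functional model of Subsection \ref{ss FM Sym} on each, and then arrange the boundary triplets so that the coupling conditions describing $\dom(T)$ coincide with the canonical ones in \eqref{e A hat}.

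First, $J$-self-adjointness of $T$ means $B:=JT$ is self-adjoint, equivalently $T^*=JTJ$. Hence $\D_{\min}=\dom(T)\cap\dom(T^*)$ is $J$-invariant and $T_{\min}$ commutes with $J$, so $T_{\min}$ is reduced by $H=H_+\oplus H_-$, yielding $T_{\min}=T_+\oplus T_-$ with symmetric $T_\pm$ in $H_\pm$. Simplicity of $T_{\min}$ descends to each $T_\pm$, since any reducing subspace of $T_\pm$ on which $T_\pm$ is self-adjoint would be a reducing subspace of $T_{\min}$ of the same kind. A short case analysis of the possible deficiency indices, using that $B_{\min}=T_+\oplus(-T_-)$ must have equal deficiency indices in order to admit the self-adjoint extension $B$, while simplicity forbids either $T_\pm$ from being self-adjoint, forces $n_+(T_\pm)=n_-(T_\pm)=1$.

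Second, I apply the functional model of Subsection \ref{ss FM Sym} separately to each $T_\pm$. Given any boundary triplet $\{\C,\gamma_0^\pm,\gamma_1^\pm\}$ for $T_\pm^*$, its Weyl function admits a Nevanlinna representation of the form \eqref{e Msc}, producing a nondecreasing $\Sigma_\pm$ satisfying \eqref{e S assump} and a real constant $C_\pm$; there is then a unitary $U_\pm\colon H_\pm\to L^2(\R,d\Sigma_\pm)$ intertwining $T_\pm$ with $T_{\Sigma_\pm}$ and carrying the chosen triplet onto $\{\C,\Gamma_0^{\Sigma_\pm},\Gamma_1^{\Sigma_\pm,C_\pm}\}$.

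The crux of the argument, and the main obstacle, is to arrange, by a suitable initial choice of the triplets $\{\C,\gamma_0^\pm,\gamma_1^\pm\}$, that under $U=U_+\oplus U_-$ the subspace $U\dom(T)$ is cut out precisely by the canonical coupling conditions $\Gamma_0^{\Sigma_+}f_+=\Gamma_0^{\Sigma_-}f_-$ and $\Gamma_1^{\Sigma_+,C_+}f_+=\Gamma_1^{\Sigma_-,C_-}f_-$. Relative to any initial triplets, $\dom(T)/\D_{\min}$ is a two-dimensional subspace of $\C^4$ which, by self-adjointness of $B$, is Lagrangian with respect to the Hermitian form of the boundary triplet $\{\C^2,(\gamma_0^+,\gamma_0^-),(\gamma_1^+,-\gamma_1^-)\}$ of $B_{\min}^*$. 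Changing the individual triplets on each side gives an action of two copies of the Krein-type boundary transformation group (each of real dimension $4$) on the space of such Lagrangians, which is parameterised by $U(2)$, also of real dimension $4$; the stabiliser of the canonical (``diagonal'') Lagrangian is the diagonal subgroup, again of real dimension $4$, so a dimension count suggests that the action is transitive. The remaining work is to verify transitivity by exhibiting the required transformations, which reduces to a linear-algebra statement about self-adjoint $2\times 2$ Cayley transforms in the spirit of \cite{DM91,DM95}. Once the canonical form is achieved, $U$ intertwines $T$ with $\widehat A\{\Sigma_+,C_+,\Sigma_-,C_-\}$ by direct comparison of domains.
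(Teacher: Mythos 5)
Your first two steps (the reduction $T_{\min}=T_+\oplus T_-$ with $n_\pm(T_\pm)=1$, and the invocation of the scalar model of Subsection \ref{ss FM Sym} on each half) are sound and essentially parallel the paper, though you rule out $N^\pm=0$ via simplicity where the paper uses the injectivity of the projections $\pr_\pm=P_\pm\uph\D/\D_{\min}$. The problem is the third step, which you yourself call the crux: you do not prove it, and the route you sketch would not work as stated. A dimension count never establishes transitivity of a group action, and here transitivity actually \emph{fails}: the group of independent triplet changes on the two sides acts block-diagonally on $\C^2\oplus\C^2$, so it preserves the class of ``decoupled'' Lagrangian planes (e.g.\ $\{\Gamma_0^+h_+=0,\ \Gamma_0^-h_-=0\}$, which corresponds to a direct sum of self-adjoint extensions and is a perfectly good Lagrangian for the form of $B_{\min}^*$), and no element of this group can carry such a plane to the diagonal one. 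The orbit of the diagonal Lagrangian consists exactly of those planes that project \emph{isomorphically} onto each factor $\D_{\max}^\pm/\D_{\min}^\pm$, so the missing ingredient is a proof that $\D/\D_{\min}$ has this property. That is precisely what the hypothesis $T_{\min}=T\cap T^*$ with $\dim(\D/\D_{\min})=2=\dim(\D_{\max}^\pm/\D_{\min}^\pm)$ buys: if $P_+h\in\D_{\min}^+$ for some $h\in\D$, then $P_-h=h-P_+h\in\D\cap H_-=\D_{\min}^-$, whence $h\in\D_{\min}$; so $\pr_\pm$ are injective, hence bijective.

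Once that is in hand, the paper's argument (Theorem \ref{l G+=G-}(ii)) finishes constructively rather than by a transitivity argument: keep an arbitrary triplet $\{\C,\Gamma_0^+,\Gamma_1^+\}$ on the plus side and \emph{define} $\Gamma_j^-:=\Gamma_j^+\pr_+\pr_-^{-1}$ on the minus side, which makes the coupling diagonal by construction; the nontrivial check is that $\{\C,\Gamma_0^-,\Gamma_1^-\}$ satisfies the Green identity for $T_{\max}^-$, and this is extracted from the self-adjointness of $B=JT$ via the computation \eqref{e Bsa}--\eqref{e G-+B-}. Your proposal omits both the injectivity argument and this verification, so as written it has a genuine gap exactly at the step you flag as the main obstacle.
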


First, we prove several propositions that describe the structure of
$T$ as an extension of the symmetric operator $T_{\min}$, and then prove Theorem \ref{p CModel} at the end of this subsection.

\begin{prop} \label{p Am pm}
Let $T$ be a J-self-adjoint operator.
Let the operators $T_{\min}^\pm$ be defined by
\begin{equation} \label{e Dmin}
T_{\min}^\pm := T \upharpoonright \D_{\min}^\pm , \quad
\D_{\min}^\pm = \dom (T_{\min}^\pm) := \D_{\min} \cap H_\pm.
\end{equation}
 Then:
\begin{description}
\item[(i)]  $T_{\min}^\pm$ is a symmetric operator in the Hilbert space $H_\pm$ and
\begin{gather} 
T_{\min} = T_{\min}^+ \oplus T_{\min}^- , \quad
B_{\min} = B_{\min}^+ \oplus B_{\min}^-, \quad \text{where} \quad
B_{\min}^\pm := \pm T_{\min}^\pm . \label{e Bmin=B+B}
\end{gather}
\item[(ii)] If any of the following two conditions  \\
(a) \quad $\rho (T) \neq \emptyset$ , \\
(b) \quad $n_+ (T_{\min}) = n_- (T_{\min})$, \\
is satisfied, then
$n_+ (T_{\min}^+) = n_- (T_{\min}^+)$ and
$n_+ (T_{\min}^-) = n_- (T_{\min}^-)$.\\
In particular, (a) implies (b).
\end{description}
\end{prop}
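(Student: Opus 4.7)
For part (i), I would start from the characterization of $J$-self-adjointness: $JT$ self-adjoint is equivalent to $T^* = JTJ$ with $\dom(T^*) = J\D$. Consequently $\D_{\min} = \D \cap \dom(T^*) = \D \cap J\D$ is invariant under $J$, so the spectral projections $P_\pm = \tfrac12(I \pm J)$ carry $\D_{\min}$ onto $\D_{\min}^\pm = \D_{\min} \cap H_\pm$, giving the algebraic direct sum $\D_{\min} = \D_{\min}^+ \dotplus \D_{\min}^-$. To see that $T$ respects this splitting, note that any $f \in \D_{\min}$ satisfies $Tf = T^*f = JTJf$, i.e.\ $JTf = TJf$; applied to $f_+ \in \D_{\min}^+$ (so $Jf_+ = f_+$) this yields $JTf_+ = Tf_+$, hence $Tf_+ \in H_+$, and similarly $Tf_- \in H_-$. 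Thus $T_{\min}^\pm$ is a well-defined operator in $H_\pm$, symmetric there by restriction from $T_{\min}$, and $T_{\min} = T_{\min}^+ \oplus T_{\min}^-$. The analogous decomposition for $B_{\min} = JT_{\min}$ follows by applying $J$ on each summand: $B_{\min}f_\pm = JTf_\pm = \pm Tf_\pm$, so $B_{\min}^\pm = \pm T_{\min}^\pm$.

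For part (ii), the crucial tool is that $B_{\min}$ is a symmetric restriction of the self-adjoint operator $B$, so $B_{\min}$ admits a self-adjoint extension in the same Hilbert space $H$; this forces $n_+(B_{\min}) = n_-(B_{\min})$. Since deficiency indices add over orthogonal direct sums and $n_\pm(-S) = n_\mp(S)$, the identity $B_{\min} = T_{\min}^+ \oplus (-T_{\min}^-)$ yields
\begin{equation*}
n_+(T_{\min}^+) + n_-(T_{\min}^-) = n_-(T_{\min}^+) + n_+(T_{\min}^-).
\end{equation*}
Setting $k_\pm := n_+(T_{\min}^\pm) - n_-(T_{\min}^\pm)$, this says $k_+ = k_-$. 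If (b) holds, applying the same addition formula to $T_{\min} = T_{\min}^+ \oplus T_{\min}^-$ gives $n_+(T_{\min}) - n_-(T_{\min}) = k_+ + k_- = 2 k_+ = 0$, so $k_\pm = 0$, which is the desired equality of deficiency indices.

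The remaining task, proving (a) $\Rightarrow$ (b), is what I expect to be the main obstacle. The identity $T^* = JTJ$ yields $\sigma(T^*) = \sigma(T)$, and combining with the general fact $\sigma(T^*) = \overline{\sigma(T)}$ shows that $\rho(T)$ is symmetric about $\R$; being open and non-empty, it therefore meets both half-planes. For any $\lambda \in \rho(T)$ the map $T - \lambda : \D \to H$ is bijective, so its restriction to $\D_{\min}$ is injective with range $\ran(T_{\min} - \lambda)$; passing to quotients,
\begin{equation*}
\dim(\D/\D_{\min}) = \dim\bigl(H/\ran(T_{\min}-\lambda)\bigr) = \dim \ker(T_{\min}^* - \bar\lambda).
\end{equation*}
By the constancy of defect numbers across each half-plane, the right-hand side equals $n_-(T_{\min})$ when $\im \lambda > 0$ and $n_+(T_{\min})$ when $\im \lambda < 0$; since both describe the same quotient dimension, $n_+(T_{\min}) = n_-(T_{\min})$, which is (b). The only real subtleties lie in the sign book-keeping for $n_\pm(-S)$ and in invoking the symmetry of $\rho(T)$; everything else reduces to a direct application of von Neumann's formula.
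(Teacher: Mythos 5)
Your part (ii) is correct and, for the implication (a) $\Rightarrow$ (b), takes a genuinely different route from the paper. The paper argues by contraposition: writing $m:=n_\pm(B_{\min})$, it shows that $n_+(T_{\min})\neq n_-(T_{\min})$ forces one open half-plane into the point spectrum of $T$ and makes $\ran(T-\la)$ non-dense on the other, whence $\rho(T)=\emptyset$; the piecewise equalities $n_+(T_{\min}^\pm)=n_-(T_{\min}^\pm)$ are then extracted from the two linear relations among the four indices of $B_{\min}^\pm$. Your direct argument --- $\rho(T)$ is symmetric about $\R$ because $T^*=JTJ$ is unitarily equivalent to $T$, so a non-empty $\rho(T)$ meets both half-planes, and the quotient isomorphism $\D/\D_{\min}\cong H/\ran(T_{\min}-\la)$ pins $\dim(\D/\D_{\min})$ to both $n_+(T_{\min})$ and $n_-(T_{\min})$ --- is cleaner and gives (a) $\Rightarrow$ (b) without any case analysis; your bookkeeping with $k_\pm$ then recovers the paper's conclusion. (Both arguments tacitly subtract deficiency indices, hence assume they are finite; that is the regime in which the proposition is applied.)

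There is, however, one genuine slip in part (i): the identity $\D_{\min}=\D\cap\dom(T^*)=\D\cap J\D$ is false in general. Since $T_{\min}=T\cap T^*$ is an intersection of graphs, one has $\D_{\min}=\{f\in\D\cap J\D \,:\, Tf=T^*f\}$, equivalently $JBf=BJf$; already for $2\times2$ matrices one can have $\D\cap J\D=H$ while $T\neq T^*$, so the commutation constraint cannot be dropped. Consequently the $J$-invariance of $\D_{\min}$ is not immediate from your set identity: one must also check that the constraint survives conjugation by $J$, which is the one-line computation the paper performs --- for $g=Jf$ with $f\in\D_{\min}$ one gets $JBg=J(BJf)=J(JBf)=Bf=BJg$, so $g\in\D_{\min}$. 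With that line inserted, the remainder of your part (i) (the splitting of $\D_{\min}$ by $P_\pm$, the invariance $T\D_{\min}^\pm\subset H_\pm$ obtained from $JTf_\pm=Tf_\pm$, and the identification $B_{\min}^\pm=\pm T_{\min}^\pm$) coincides with the paper's argument and is correct.
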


\begin{proof}
\textbf{(i)} \
Since $B=B^*$ and
$ \D = \dom (T) = \dom(B)$, we have $T^*=BJ$ and
$\D_{\min} = \{ f \in \D \cap J \D : JB f = BJ f \}$.
So if $f \in \D_{\min}$ and $g=Jf $, then 
$g \in \D \cap J \D$ and 
\[
JB g = JBJf= JJBf = Bf = B Jg .
\]
This implies $J \D_{\min} \subset \D_{\min}$ (and 
in turn  $J \D_{\min} = \D_{\min}$ since $J$
is a unitary operator).
Hence, for $ f \in \D_{\min}$ we have
$ P_+ f + P_- f \in \D_{\min}$ and
$ P_+ f - P_- f \in \D_{\min}$.
So
$ P_\pm f \in \D_{\min}$ and
$\D_{\min} \subset ( \D_{\min} \cap  H_+ ) \oplus (\D_{\min} \cap H_-) $. The inverse inclusion is obvious, and we see that
\[
\D_{\min} = ( \D_{\min} \cap  H_+ ) \oplus 
(\D_{\min} \cap H_-) .
\]

Now note that
$T_{\min} (\D_{\min} \cap H_\pm )  \subset  H_\pm $ .
Indeed, let $f_\pm \in \D_{\min} \cap H_\pm $. Since
$J f_\pm = \pm f_\pm$ and $T f_\pm= T^* f_\pm$, we see that
\begin{equation} \label{e Bfpm}
 JB f_\pm = BJ f_\pm = \pm B f_\pm .
\end{equation}
Note that $g \in H_\pm$ is equivalent to 
$J g = \pm g$. So (\ref{e Bfpm}) implies $B f_\pm \in H_\pm$, and therefore the vector $T_{\min} f_\pm = T f_\pm = JBf_\pm $ belongs to $H_\pm$.

The first part of \eqref{e Bmin=B+B} is proved.
Since $T_{\min}$ is a symmetric operator in $H$, the operators $T_{\min}^\pm$ are symmetric too.
Finally, the second part of \eqref{e Bmin=B+B} follows from \eqref{e defBmin} and \eqref{e Bfpm}.

\textbf{(ii)}
Since $B=B^*$, it easy to see that
\begin{gather}
n_+ (B_{\min}^+ ) + n_+ (B_{\min}^-) = n_+ (B_{\min}) = n_- (B_{\min})
= n_- (B_{\min}^+ ) + n_- (B_{\min}^-) =: m  . \label{e nB}
\end{gather}
The equalities \ $n_\pm (T_{\min}^+) = n_\pm (B_{\min}^+)$ \ and  \ $\ n_\pm (T_{\min}^-) = n_\mp (B_{\min}^-)$ \ imply \begin{gather} \label{e nT}
n_\pm (T_{\min}) = n_\pm (T_{\min}^+ ) + n_\pm (T_{\min}^-) = n_\pm (B_{\min}^+ ) + n_\mp (B_{\min}^-) .
\end{gather}

It follows from \eqref{e nB} and \eqref{e nT} that $n_+ (T_{\min}) >m$ yields $n_- (T_{\min}) < m$. In this case, $\C_- \subset \sigma_p (T)$
and $ H \neq \overline{(T-\lambda I) \dom (T)}$ for $\la \in \C_+$. Hence, $\rho (T) = \emptyset $.
The case $n_+ (T_{\min}) <m$, $n_- (T_{\min}) > m$ is similar.
Thus, if $\rho (T) \neq \emptyset$ or $n_+ (T_{\min}) = n_- (T_{\min})$, then
\[
n_+ (B_{\min}^+ ) + n_- (B_{\min}^-) = n_- (B_{\min}^+ ) + n_+ (B_{\min}^-) = m.
\]

Using \eqref{e nB}, we get \ $n_+ (B_{\min}^\pm) = n_- (B_{\min}^\pm)$ \ and, therefore,
 \ $n_+ (T_{\min}^\pm) = n_- (T_{\min}^\pm)$.
\end{proof}

Assume now that the operator $T_{\min}$ is densely defined in $H$. Put, for convenience' sake,
\[
T_{\max}^\pm = (T_{\min}^\pm)^*, \qquad B_{\max}^\pm = (B_{\min}^\pm)^*.
\]
 Clearly,
\begin{equation} \label{e TmaxBmax}
T_{\max}^\pm = \pm B_{\max}^\pm \quad \text{and} \quad \dom( T_{\max}^\pm) = \dom( B_{\max}^\pm) =:\D_{\max}^\pm.
\end{equation}

\begin{prop} \label{p PpmIsom}
Let $T$ be a J-self-adjoint operator.
Assume that $T_{\min}$ is densely defined in $H$ and
\[
n_+ (T_{\min}^+) = n_- (T_{\min}^+) =: N^+ < \infty, \qquad
n_+ (T_{\min}^-) = n_- (T_{\min}^-) =: N^- <\infty.
\]
Then:
\begin{description}
\item[(i)]
$ n_+ (T_{\min}^+) = n_- (T_{\min}^+) =
n_+ (T_{\min}^-) = n_- (T_{\min}^-) $, \ \text{that is}, \ $N^+=N^-=:N$ ;
\item[(ii)] the mappings $\pr_\pm := P_\pm \uph \D /\D_{\min}$ are well-defined and are linear isomorphisms from the quotient space
$\D/\D_{\min}$ onto the quotient space $\D_{\max}^{\pm}/\D_{\min}^{\pm}$.
\end{description}
\end{prop}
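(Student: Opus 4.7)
The plan is to prove (ii) in stages---well-definedness of $\pr_\pm$, then injectivity---deduce (i) from dimension comparison, and finally upgrade injectivity to bijectivity using (i). Thus (i) and (ii) are proved in a coupled way.

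First I would record the relevant dimensions. Since $B$ is a self-adjoint extension of the symmetric operator $B_{\min}$ whose deficiency indices equal $(N^+ + N^-, N^+ + N^-)$ by \eqref{e nB} and the hypotheses, the classical theory of self-adjoint extensions yields $\dim(\D/\D_{\min}) = N^+ + N^-$, while von Neumann's formula gives $\dim(\D_{\max}^\pm / \D_{\min}^\pm) = 2 N^\pm$.

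Next, for well-definedness of $\pr_+$, I would take $f \in \D$ and $h \in \D_{\min}^+$, split $f = P_+ f + P_- f$, and use that $T_{\min}^+ h = T_{\min} h \in H_+$ is orthogonal to $P_- f \in H_-$, combined with the adjoint relation $(f, T^* h) = (T f, h)$ and the identity $T^* h = T_{\min} h$, to obtain
\[
(P_+ f, T_{\min}^+ h)_H = (f, T_{\min} h)_H = (T f, h)_H = (P_+ T f, h)_H.
\]
This shows $P_+ f \in \D_{\max}^+$ with $T_{\max}^+ P_+ f = P_+ T f$; descent of $\pr_+$ to $\D/\D_{\min}$ is immediate from Proposition \ref{p Am pm}(i), and $\pr_-$ is entirely symmetric.

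The main obstacle is injectivity of $\pr_+$: assuming $f \in \D$ and $P_+ f \in \D_{\min}^+$, I need $P_- f \in \D_{\min}^-$. The key input is $P_+ J = P_+$ (from $J = P_+ - P_-$), which combined with the well-definedness formula gives
\[
P_+ B f \;=\; P_+ T f \;=\; T_{\max}^+ P_+ f \;=\; T_{\min}^+ P_+ f,
\]
while directly $B P_+ f = J T_{\min}^+ P_+ f = T_{\min}^+ P_+ f$, since $T_{\min}^+ P_+ f \in H_+$. Subtracting yields $P_+ B P_- f = 0$, so $B P_- f \in H_-$, and therefore $T P_- f = J B P_- f = -B P_- f = B J P_- f = T^* P_- f$. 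Together with $P_- f \in \D \cap J \D$, this forces $P_- f \in \D_{\min} \cap H_- = \D_{\min}^-$. Injectivity of $\pr_-$ follows by the same argument. This is the step where $J$-self-adjointness is used essentially to control the $H_-$ component from information on the $H_+$ side.

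Finally, injectivity of both $\pr_\pm$ gives $N^+ + N^- = \dim(\D/\D_{\min}) \le \dim(\D_{\max}^\pm/\D_{\min}^\pm) = 2 N^\pm$, hence $N^+ = N^-$, proving (i). With the dimensions on both sides of $\pr_\pm$ now equal, the injective maps $\pr_\pm$ are automatically bijective, completing (ii).
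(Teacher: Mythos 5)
Your proof is correct and follows the same overall strategy as the paper's: show that $\pr_\pm$ are well defined and injective, deduce $N^+=N^-$ by comparing $\dim(\D/\D_{\min})=N^++N^-$ with $\dim(\D_{\max}^\pm/\D_{\min}^\pm)=2N^\pm$, and then upgrade injectivity to bijectivity by equality of finite dimensions. The difference lies in how the two nontrivial steps are justified. For well-definedness the paper simply observes $\D=\dom(B)\subset\dom(B_{\max})=\D_{\max}^+\oplus\D_{\max}^-$, whereas you verify $P_+f\in\D_{\max}^+$ directly from the adjoint pairing; this is slightly longer but hands you the identity $T_{\max}^+P_+f=P_+Tf$, which you then exploit. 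For injectivity the paper reduces to the assertion $\D\cap H_-=\D_{\min}^-$, stated without proof (``Recall that\dots''), while you actually establish the inclusion that is needed: the computation $P_+BP_-f=P_+Bf-BP_+f=T_{\min}^+P_+f-T_{\min}^+P_+f=0$ shows $BP_-f\in H_-$, whence $TP_-f=JBP_-f=BJP_-f=T^*P_-f$ and $P_-f\in\D_{\min}^-$. This is precisely the point where $J$-self-adjointness must be used, and your argument supplies the verification that the paper leaves implicit (your computation with $P_+f=0$ also proves the paper's asserted equality $\D\cap H_-=\D_{\min}^-$ as a special case). All individual steps --- the deficiency-index bookkeeping via $n_\pm(B_{\min}^\pm)=n_{\pm}(T_{\min}^+),\,n_{\mp}(T_{\min}^-)$, the von Neumann dimension count, and the final bijectivity argument --- check out.
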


\begin{proof}
Note that
\begin{equation} \label{e defDmax}
\D_{\max} := \D_{\max}^+ \oplus \D_{\max}^-
\end{equation}
 is a domain of both the operators $ T_{\max}:=T_{\min}^*$ and
$ B_{\max} := B_{\min}^*$. Since 
\[ \D=\dom(B) \subset \dom (B_{\max}) \quad 
\text{and} \quad P_\pm \D_{\min} = \D_{\min}^\pm, 
\]
we see that
$\pr_\pm : \D /\D_{\min} \to  \D_{\max}^{\pm}/\D_{\min}^{\pm}$
are well-defined linear mappings.

Let us show that
\begin{equation} \label{e PpmInj}
\text{the mappings}\quad \pr_\pm \quad \text{are injective.}
\end{equation}
Indeed, if $\ker \pr_+ \neq \{0\}$, then there exists
$h \in \D $  such that $ h \not \in \D_{\min}$ and $P_+ h \in \D_{\min}^+ $. Recall that $\D_{\min}^+ \subset \D$, so $ P_- h = h - P_+ h \in \D \cap H_- = \D_{\min}^-$. By the first equality in \eqref{e Bmin=B+B}, $\D_{\min} = \D_{\min}^+ \oplus \D_{\min}^-$ and this implies that $h = P_+ h + P_- h$ belongs to $\D_{\min}$, a contradiction.

Since $2N^\pm = \dim (\D_{\max}^\pm / \D_{\min}^\pm)$, it follows from \eqref{e PpmInj} that
\begin{equation} \label{e Npm>m}
2N^+ \geq m ,\quad 2N^- \geq m, \quad \text{where} \quad m:=\dim (\D/\D_{\min})
\end{equation}
(this definition of $m$ coincides this that of \eqref{e nB}). Indeed,
$B$ is a self-adjoint extension of $B_{\min}$, therefore,
\[
\dim \left(\D/\D_{\min} \right) = n_+ (B_{\min}^+) + n_+ (B_{\min}^-) = n_- (B_{\min}^+) + n_- (B_{\min}^-).
\]
We see that $\ m= N^+ + N^-$.
From this and \eqref{e Npm>m}, we get $N^+=N^-=m/2$.
Thus, statement \textbf{(i)} holds true.
Besides, taking \eqref{e PpmInj} and $N^\pm < \infty$ into account, one obtains that
$\pr_\pm$ are surjective. This complete the proof of \textbf{(ii)}.
\end{proof}

Recall that existence of a boundary triplet for $S^*$,
where $S$ is a symmetric operator in a separable Hilbert space $H$, is equivalent to $n_+ (S) = n_- (S) $ (see \cite{Koch75,DM91}).

\begin{thm}[cf. Theorem 6.4 of \cite{DHMS00}] \label{l G+=G-}
Let $T$ be a J-self-adjoint operator.
Assume that $T_{\min}$ is densely defined in $H$ and
$n_+ (T_{\min}) = n_- (T_{\min}) =: m < \infty$.
Then:
\begin{description}
\item[(i)] $m$ is an even number and
\[
 n_+ (T_{\min}^+) = n_- (T_{\min}^+) =
n_+ (T_{\min}^-) = n_- (T_{\min}^-) = m/2.
\]
\item[(ii)]
Let $\{ \C^{m/2}, \Gamma_0^+ , \Gamma_1^+\}$ be a boundary triple for $T_{\max}^+ $.
Then there exist a boundary triple \\
$\{ \C^{m/2}, \Gamma_0^- , \Gamma_1^- \}$ for $T_{\max}^- $ such that
\begin{gather} \label{e D}
\D = \{h \in \D_{\max}: \quad
\Gamma_0^+ P_+ h =\Gamma_0^- P_- h, \quad \Gamma_1^+ P_+ h =\Gamma_1^- P_- h \}
\end{gather}
(note that $P_\pm h \in \D_{\max}^\pm $ due to \eqref{e defDmax}).
\end{description}
\end{thm}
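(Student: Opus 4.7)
Part (i) is immediate: the hypothesis $n_+(T_{\min}) = n_-(T_{\min}) = m < \infty$ activates condition (b) in Proposition \ref{p Am pm}(ii), which gives $n_+(T_{\min}^\pm) = n_-(T_{\min}^\pm) < \infty$, so Proposition \ref{p PpmIsom} applies and its part (i) forces $N^+ = N^- = m/2$. In particular $m$ is even.

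For part (ii), the plan is to transport the given boundary triple for $T_{\max}^+$ across the isomorphisms $\pr_\pm \colon \D/\D_{\min} \to \D_{\max}^\pm/\D_{\min}^\pm$ of Proposition \ref{p PpmIsom}(ii). Concretely, for each $f_- \in \D_{\max}^-$ I pick any $h \in \D$ with $P_- h - f_- \in \D_{\min}^-$ (which exists by surjectivity of $\pr_-$, and is unique modulo $\D_{\min}$) and set $\Gamma_i^- f_- := \Gamma_i^+ P_+ h$ for $i = 0, 1$. Well-definedness follows from $\Gamma_i^+ (\D_{\min}^+) = \{0\}$ together with $P_+(\D_{\min}) = \D_{\min}^+$; linearity is then automatic. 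Surjectivity of $(\Gamma_0^-, \Gamma_1^-)$ onto $\C^{m/2} \oplus \C^{m/2}$ is obtained by composing the surjectivity of $(\Gamma_0^+, \Gamma_1^+)$ with that of $\pr_+$: given target $(a,b)$, first pick $f_+ \in \D_{\max}^+$ with $\Gamma_0^+ f_+ = a$, $\Gamma_1^+ f_+ = b$, then lift via $\pr_+$ to $h \in \D$ with $P_+ h - f_+ \in \D_{\min}^+$, and take $f_- := P_- h$. Since $\D_{\min}^- \subset \ker(\Gamma_0^-, \Gamma_1^-)$ by construction and surjectivity forces the kernel to have codimension $m = \dim(\D_{\max}^-/\D_{\min}^-)$, they coincide. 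The graph description of $\D$ then follows: for $h \in \D$ the definition gives $\Gamma_i^+ P_+ h = \Gamma_i^- P_- h$; for the converse, if $h \in \D_{\max}$ satisfies the matching relations, pick $\tilde h \in \D$ with $P_- \tilde h - P_- h \in \D_{\min}^-$ and $\Gamma_i^+ P_+ \tilde h = \Gamma_i^+ P_+ h$, so that $P_\pm(h - \tilde h) \in \D_{\min}^\pm$ and hence $h - \tilde h \in \D_{\min} \subset \D$.

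The decisive step is Green's identity. The key observation is that $B = JT$ being self-adjoint together with $B_{\min} = B_{\min}^+ \oplus B_{\min}^-$ (Proposition \ref{p Am pm}(i)) implies $B_{\max} = B_{\max}^+ \oplus B_{\max}^-$, so that for every $h \in \D$,
\[
Th = T_{\max}^+ P_+ h + T_{\max}^- P_- h
\]
by \eqref{e TmaxBmax}. Substituting this decomposition into the identity $(Th, Jg) = (Jh, Tg)$ expressing $J$-self-adjointness for $h, g \in \D$, and using $\ran(T_{\max}^\pm) \subset H_\pm$ to annihilate the cross terms, produces the balance
\[
(T_{\max}^+ P_+ h, P_+ g) - (P_+ h, T_{\max}^+ P_+ g) = (T_{\max}^- P_- h, P_- g) - (P_- h, T_{\max}^- P_- g).
\]
Green's identity for the boundary triple $(\Gamma_0^+, \Gamma_1^+)$ converts the left-hand side into $(\Gamma_1^+ P_+ h, \Gamma_0^+ P_+ g) - (\Gamma_0^+ P_+ h, \Gamma_1^+ P_+ g)$, which by the very construction of $\Gamma_i^-$ equals $(\Gamma_1^- P_- h, \Gamma_0^- P_- g) - (\Gamma_0^- P_- h, \Gamma_1^- P_- g)$. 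The right-hand side agrees with $(T_{\max}^- f_-, g_-) - (f_-, T_{\max}^- g_-)$ for any $f_-, g_- \in \D_{\max}^-$ with $f_- - P_- h, \, g_- - P_- g \in \D_{\min}^-$, because the four correction terms cancel by symmetry of $T_{\min}^-$ and the identity $(T_{\max}^- a, b) = (a, T_{\min}^- b)$ for $b \in \D_{\min}^-$. Since every $f_- \in \D_{\max}^-$ arises this way via surjectivity of $\pr_-$, Green's identity is established. The main obstacle is precisely locating the decomposition $Th = T_{\max}^+ P_+ h + T_{\max}^- P_- h$, which is what transmits the global $J$-symmetry of $T$ into separate symplectic structures on the halves $\D_{\max}^\pm$; everything else is bookkeeping with quotients.
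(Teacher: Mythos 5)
Your proposal is correct and follows essentially the same route as the paper: part (i) via Proposition \ref{p PpmIsom}(i), and part (ii) by transporting the given triple through the isomorphisms $\pr_\pm$ (your pointwise definition of $\Gamma_i^- f_-$ is exactly the paper's $\Gamma_j^- = \Gamma_j^+ \pr_+ \pr_-^{-1}$), then verifying surjectivity and Green's identity from the self-adjointness of $B=JT$ together with the surjectivity of $P_-\colon \D \to \D_{\max}^-$. The only cosmetic difference is that you phrase the symplectic balance in terms of $(Th,Jg)=(Jh,Tg)$ rather than $(Bf,g)=(f,Bg)$ and add an explicit $\D_{\min}^-$-correction argument where the paper invokes surjectivity of $P_-$ directly.
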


Theorem \ref{l G+=G-} shows that the operator $T$ admits the representation
\begin{gather} \label{e T=}
T=T_{\max}^+ \oplus T_{\max}^- \uph \D, \quad \text{and its domain} \
\D \ \text{has the representation \eqref{e D}}.
\end{gather}

\begin{proof}
\textbf{(i)} follows from Proposition \ref{p PpmIsom} (i).

\textbf{(ii)} Let $\{ \C^{m/2}, \Gamma_0^+ , \Gamma_1^+\}$ be a boundary triple for the operator $T_{\max}^+ $
(actually, statement \textbf{(i)} implies that such a boundary triple exists,
for the case when the space $H$ is separable see e.g. \cite{Koch75}).
It follows from Definition \ref{dI.3}, that $\Gamma_0^+ \D_{\min}^+ = \Gamma_1^+ \D_{\min}^+ = \{ 0\}$. So
one can consider the mappings $\Gamma^+ : h_+ \to \{ \Gamma_0^+ h_+, \Gamma_1^+ h_+\}$ as a linear isomorphism from $\D_{\max}^+ /\D_{\min}^+$ onto $\C^{m/2} \oplus \C^{m/2}$.
Introducing the mappings
\begin{gather} \label{e G-=G+}
\Gamma_j^- := \Gamma_j^+ \pr_+ \pr_-^{-1} , \quad j=0,1,
\end{gather}
one can get from Proposition \ref{p PpmIsom} (ii) the fact that
$\Gamma^- : h_- \to \{ \Gamma_0^- h_-, \Gamma_1^- h_-\}$
is a linear isomorphism from $\D_{\max}^- /\D_{\min}^-$ onto $\C^{m/2} \oplus \C^{m/2}$. Putting $\Gamma_0^- h_- =\Gamma_1^- h_- = 0$
for all $h_- \in \D_{\min}^-$, we get natural linear extensions of  $\Gamma_0^-$,  $\Gamma_1^-$, and $\Gamma^-$ on $\D_{\max}^-$.

Let $\mathfrak{h} \in \D_{\max}/\D_{\min}$ and $\mathfrak{h_\pm} = \widetilde P_\pm \mathfrak{h}$, where $\widetilde P_\pm $ are mappings from $\D_{\max}/\D_{\min}$ to $\D_{\max}^\pm/\D_{\min}^\pm$ induced by $P_\pm$.
By Proposition \ref{p PpmIsom}~(ii), $\mathfrak{h} \in \D /\D_{\min}$
if and only if $\mathfrak{h}_+ = \pr_+ \pr_-^{-1} \mathfrak{h}_-$.
From this and \eqref{e G-=G+}, one can obtain easily that
\begin{gather} \label{e D1}
\D = \{h=h_+ + h_- : \quad h_\pm \in \D_{\max}^\pm , \quad
\Gamma_j^+ h_+ =\Gamma_j^- h_-, \ j=0,1 \ \}.
\end{gather}

Let us show that
$\{ \C^{m/2}, \Gamma_0^- , \Gamma_1^- \}$ is a boundary triple for $T_{\max}^-$.
The property (ii) of Definition \ref{dI.3} follows from the same property for the boundary triple
$\{ \C^{m/2}, \Gamma_0^+ , \Gamma_1^+ \}$ and from Proposition
\ref{p PpmIsom}~(ii).

Now we have to prove property (i) of Definition \ref{dI.3}. Since $B = B^*$, for all $f,g \in \D = \dom (B)$ we have
\begin{gather}
0= (Bf,g)_H-(f,Bg)_H
= \notag \\
\left( \ B_{\max} (P_+ f + P_- f) \ , \ P_+ g + P_- g \ \right)_H -
\left( \ P_+ f + P_- f \ , \ B_{\max} (P_+ g + P_- g) \ \right)_H
= \notag \\
\left( B_{\max}^+ P_+ f , P_+ g \right)\!_H \! +
\left( B_{\max}^- P_- f, P_- g \right)\!_H \!-
\left( P_+ f , B_{\max}^+ P_+ g \right)\!_H \!-
\left( P_- f, B_{\max}^- P_- g \right)\!_H  \label{e Bsa}
\end{gather}
Since $P_\pm f, P_\pm g \in \D_{\max}^\pm$ and
$\{ \C^{m/2}, \Gamma_0^+ , \Gamma_1^+\}$ is a boundary triple for $T_{\max}^+ = B_{\max}^+ $, Definition \ref{dI.3}
yields
\begin{multline} \label{e BP+=GP+}
\left( B_{\max}^+ P_+ f , P_+ g \right)_H -
\left( P_+ f , B_{\max}^+ P_+ g \right)_H = \\ =
(\Gamma_1^+ P_+ f, \Gamma_0^+ g)_{\C^{m/2}} -
(\Gamma_0^+ P_+ f,\Gamma_1^+ P_+ g)_{\C^{m/2}} .
\end{multline}
From \eqref{e D1} and $f,g \in \D$, we get
\begin{multline} \label{e GP=GP}
(\Gamma_1^+ P_+ f, \Gamma_0^+ P_+ g)_{\C^{m/2}} -
(\Gamma_0^+ P_+ f,\Gamma_1^+ P_+ g)_{\C^{m/2}} = \\ =
(\Gamma_1^- P_- f, \Gamma_0^- P_- g)_{\C^{m/2}} -
(\Gamma_0^- P_- f,\Gamma_1^- P_- g)_{\C^{m/2}}.
\end{multline}
It follows from \eqref{e Bsa}, \eqref{e BP+=GP+}, and \eqref{e GP=GP} that
\begin{multline*}
0=
(\Gamma_1^- P_- f, \Gamma_0^- P_- g)_{\C^{m/2}} -
(\Gamma_0^- P_- f,\Gamma_1^- P_- g)_{\C^{m/2}} + \\ +
\left( B_{\max}^- P_- f, P_- g \right)_H
 -
\left( P_- f, B_{\max}^- P_- g \right)_H . 
\end{multline*}
or, equivalently,
\begin{multline}
\left( T_{\max}^- P_- f, P_- g \right)_H
 - \left( P_- f, T_{\max}^- P_- g \right)_H = \\ =
(\Gamma_1^- P_- f, \Gamma_0^- P_- g)_{\C^{m/2}} -
(\Gamma_0^- P_- f,\Gamma_1^- P_- g)_{\C^{m/2}}
 \label{e G-+B-}
\end{multline}
for all $f,g \in \D$.
It follows easily from Proposition \ref{p PpmIsom} (ii) that the mapping $P_- :\D \to \D_{\max}^-$ is surjective. Therefore \eqref{e G-+B-} implies that
property (ii) of Definition \ref{dI.3} is fulfilled for
$\{ \C^{m/2}, \Gamma_0^- , \Gamma_1^- \}$ and so this triple is a boundary triple for $T_{\max}^-$.
Finally, note that \eqref{e D1} coincides with \eqref{e D}.
\end{proof}

\begin{proof}[Proof of Theorem \ref{p CModel}.]
By Theorem \ref{l G+=G-} (i), $n_\pm (T_{\min}^\pm) =1$ and there exist boundary triplets
$\Pi^\pm=\{\C, \Gamma_0^\pm, \Gamma_1^\pm \}$ for $T_{\max}^\pm$ such that
\eqref{e D} holds. Let $M_\pm $ be the Weyl functions of
$T_{\min}^\pm$ corresponding to the boundary triplets
$\Pi^\pm$.
Since $T_{\min}^\pm $ are densely defined operators, $M_\pm$ have the form  \eqref{e Msc} with certain constants $C_\pm \in \R_\pm$ and positive measures $d\Sigma_\pm (t)$ satisfying \eqref{e S assump}.
This fact follows from Corollary 2 in \cite[Section 1.2]{DM91} as well as from the remark after \cite[Theorem 1.1]{DM95} and \cite[Remark 5.1]{DM95} (note that, in the case of deficiency indices (1,1), condition (3) of Corollary 2 in \cite[Section 1.2]{DM91} is equivalent to the second condition in \eqref{e S assump}).
By Corollary 1 in \cite[Section 1.2]{DM91} (see also \cite[Corollary 7.1]{DM91}), the simplicity of both the operators $T_{\min}^\pm $ and $\wh T_{\Sigma_\pm}$
implies that
\begin{equation} \label{e T=UTU}
U_\pm T_{\min}^\pm U^{-1}_\pm=  T_{\Sigma_\pm} ,
\end{equation}
where $T_{\Sigma_\pm} $ are the operators defined in Subsection \ref{ss FM Sym}, and $U_\pm$ are certain unitary operators from $H_\pm$ onto $L^2 (\R, d\Sigma_\pm (t))$.
Moreover, the unitary operators $U_\pm$ can be chosen such that \begin{equation} \label{e Ga=modelGa}
\Gamma_0^\pm = \Gamma_0^{\Sigma_\pm} U_\pm , \qquad
\Gamma_1^\pm = \Gamma_1^{\Sigma_\pm, C_\pm} U_\pm .
\end{equation}
The last statement follows from the description of all possible boundary triples in terms of chosen one (see e.g. \cite{Koch75} and \cite[Proposition 1.7]{DM95}). Indeed, since the deficiency indices of $T_{\min}^\pm$ are (1,1),
\cite[formulae  (1.12) and (1.13)]{DM95} imply that
$\Gamma_0^\pm = e^{i \alpha_\pm} \, \Gamma_0^{\Sigma_\pm} U_\pm $ and
$\Gamma_1^\pm = e^{i \alpha_\pm } \, \Gamma_1^{\Sigma_\pm, C_\pm} U_\pm $
with $\alpha \in [0,2\pi)$. Now changing $U_\pm$ to $e^{i \alpha_\pm} \, U_\pm $
we save \eqref{e T=UTU} and get \eqref{e Ga=modelGa}.

Formulae \eqref{e T=} and \eqref{e D} complete the proof.
\end{proof}

\begin{rem} \label{r sa}
\textbf{(1)} Self-adjoint couplings of symmetric operators were studied in \cite{DHMS00,FHdSW05}
(see also references therein). Theorem \ref{l G+=G-} (ii) can be considered as a modification of \cite[Theorem 6.4]{DHMS00}
for $J$-self-adjoint operators. 

\textbf{(2)} Note that in Proposition \ref{p Am pm} we \emph{do not} assume
that the domain $\D_{\min}$ of $T_{\min}$ is dense in $H$. However, for convenience' sake, the operator $T_{\min}$ is assumed to be densely defined in the other statements of this subsection. The assumption $\overline{\dom (T_{\min})} = H$ can be removed
from Proposition \ref{p PpmIsom} and Theorem \ref{l G+=G-} with the use of the linear relation notion
in the way similar to  \cite[Section 6]{DHMS00}.

\textbf{(3)} Theorems \ref{l G+=G-} (ii) and \ref{p CModel} show that the operator $T$ admits an infinite family of functional models,
which corresponds to the infinite family of boundary triples. 
All the functional models can be derived from a chosen one due to \cite[Proposition 1.7]{DM95}.
\end{rem}

\subsection{The Sturm-Liouville case}
\label{ss SLModel}

Consider the differential expressions
\begin{equation}\label{II_1_01}
\lexp [y] = \frac 1{|r|} \left( -(p y')' +qy \right) \qquad \text{and}
\qquad \aexp [y] =  \frac 1{r} \left( -(py')' +qy \right),
\end{equation}
assuming that $1/p, q, r \in L_{loc}^1(a,b)$ are real-valued coefficients, that $\frac 1{p(x)}>0$ and $xr(x)>0$ for
almost all  $x\in(a,b)$, and that $-\infty \leq a <0 <b \leq + \infty$. So the weight function $r$ has the only turning point at $0$ and
the differential expressions $\aexp$ and $\lexp$ are regular at all points of the interval $(a,b)$
(but may be singular at the endpoints $a$ and $b$).
The differential expressions are understood in the sense of M.G.~Krein's  quasi-derivatives (see e.g. \cite{CL89}).

If the endpoint $a$ (the endpoint $b$) is regular or is in the limit circle case for $\lexp [ \cdot]$, we equip the expressions $\lexp [ \cdot]$ and $\aexp [\cdot]$ with a separated self-adjoint boundary condition (see e.g. \cite{Weid87} or \cite{Lan82}) at $a$ (resp., $b$), and
get in this way the self-adjoint operator $L$ and the $J$-self-adjoint operator $A$ in the Hilbert space $L^2 (\R, |r(x)|dx)$. Indeed, $A=JL$ with $J$ defined by
\begin{gather} \label{e J}
(Jf)(x) = (\sgn x) f (x) , 
\end{gather}
Obviously, $J^*=J^{-1}=J\ $ in $L^2 \left( \, (a,b), \, |r(x)|dx \right)$. So $J$ is a signature operator and $A$ is a J-self-adjoint operator.

In the case when $\lexp [\cdot]$ is in the limit point case at
$a$ and/or $b$, we do not need boundary conditions at $a$ and/or $b$.

It is not difficult to see that the operator $A_{\min} := A \cap A^*$ is a closed densely defined symmetric operator with the deficiency indices (2,2) and that $A_{\min}$ admits an orthogonal decomposition
$A_{\min} = A_{\min}^+ \oplus A_{\min}^- $, where $A_{\min}^+ $ ($A_{\min}^-$) is a part of $A_{\min} $ in $L^2 ((0,b), |r(x)|dx)$ (resp., $L^2 ((a,0), |r(x)|dx)$), see e.g. \cite[Section 2.1]{KarMMM07},  and (\ref{e Amin=}) below for a particular case (note that $A_{\min}$ \emph{is not} a minimal operator associated with $\aexp [\cdot]$ in the usual sense).
The operators $A_{\min}^\pm $ are simple. This fact considered known by specialists, it was proved in \cite{G72}, formally, under some additional conditions on the coefficients. A modification of the same proof is briefly indicated in Remark \ref{r simplicity} below.
So $A_{\min} $ is a simple symmetric operator.

Applying Theorem \ref{p CModel},
one obtains a functional model for $A$. However, we will show that
a model for $A$ can be obtained directly from the classical spectral theory of Sturm-Liouville operators and that $\Sigma_\pm$ are spectral measures associated with Titchmarsh-Weyl m-coefficients of $A$.

To avoid superfluous notation and consideration of several different cases, we argue for the case when
\begin{gather} \label{e Aas}
(a,b) = \R, \quad p \equiv 1, \quad r(x) \equiv \sgn x, \quad \\
\text{and the differential expression} \quad \lexp [\cdot] \quad \text{is limit-point at} \ +\infty \ \text{and} \ -\infty. \label{e Aas2}
\end{gather}
That is we assume that the operator
\begin{gather*} 
L=-\frac {d^2}{dx^2 } +q(x) \qquad   \left( \ A= (\sgn x) \left( -\frac {d^2}{dx^2 } +q(x) \right) \ \right)
\end{gather*}
is defined on the maximal domain and is self-adjoint (resp., J-self-adjoint). Under these assumptions,  
\begin{gather*}
\dom (L) = \dom (A) = \{ y \in L^2 (\R) \ : \ y,y' \in AC_{\loc} (\R),\ y''+qy \in L^2 (\R) \}.
\end{gather*}
The operator $A_{\min} = A \cap A^*$
has the form 
\begin{gather} 
 A_{\min}=A\upharpoonright \dom(A_{\min}), \notag \\
 \dom(A_{\min}) = \{ y \in \dom(A) \ : \ y(0)=y'(0) = 0 \}.\label{e Amin=}
\end{gather} 
By $A_{\min}^\pm$ we define the restrictions of $A_{\min}$ on $\dom(A_{\min}) \cap L^2 (\R_\pm)$.

Let us define the Titchmarsh-Weyl m-coefficients $M_{\Sl+} (\lambda)$ and $M_{\Sl-} (\lambda)$
for the Neumann problem associated with the differential
expression $ \aexp [\cdot]$ on $\R_+$ and
$\R_-$, respectively. Facts mentioned below can be found, e.g., in \cite{LevSar,Tit58}, where they are given
for spectral problems on $\R_+$, but the modification for $\R_-$ is straightforward.
Let $s (x,\lambda)$, $c (x,\lambda)$
be the solutions of the equation
\[
 \quad -y^{\prime \prime} (x)+q(x)y(x)=\lambda y(x)
\quad \]
subject to boundary conditions
\[
s ( 0, \lambda) = \frac{d}{dx} c ( 0, \lambda)=0, \qquad
\frac{d}{dx} s ( 0, \lambda) = c ( 0, \lambda)=1 .
\]
Then $M_{\Sl\pm} (\lambda)$ are well-defined by the inclusions
\begin{equation} \label{e defM}
\psi_\pm (\cdot,\la ) = - s (\cdot, \pm \lambda) +
M_{\Sl\pm} (\lambda) \, c (\cdot, \pm \lambda) \ \in \ L^2 (\R_\pm),
\end{equation}
for all $\lambda \in \C \setminus \R$.

The functions $M_{\Sl\pm} (\lambda ) $ are (R)-functions (belong to the class (R)) ;
\emph{i.e.,
$M_{\Sl\pm} (\lambda) $
are holomorphic in $\C \setminus \R$,
$M_{\Sl\pm} (\overline \lambda) = \overline{ M_{\Sl\pm} ( \lambda)}$ and
$\im \lambda \ \im M_{\Sl\pm} (\lambda) \geq 0$, $\lambda \in \C \setminus \R$} (see e.g. \cite{KK1}).

Moreover, 
$M_{\Sl\pm} (\lambda)$ admit the following representation
\begin{gather} \label{e MpmInt}
M_{\Sl\pm} (\lambda) = \int_{\R} \frac{d \Sigma_{\Sl\pm} (t)}{t-\lambda}, \
\end{gather}
where $ \Sigma_{\Sl\pm} $ are nondecreasing scalar
function such that conditions \eqref{e S assump} are fulfilled
and
\[
\displaystyle \int_\R (1+|t|)^{-1} d \Sigma_{\Sl\pm} (t) < \infty;
\]
the functions $M_{\Sl\pm} (\lambda)$ have the asymptotic formula
\begin{gather} \label{e asM}
M_{\Sl\pm} (\lambda ) = \pm \frac{i}{\sqrt{\pm\lambda}} + O\left( \frac{1}{\lambda}\right) , \quad
(\lambda \rightarrow \infty , \
0<\delta<\arg \lambda < \pi - \delta ) \ .
\end{gather}
Here and below $\sqrt{z}$ is the branch
of the multifunction on the complex plane
$\C$ with the cut along $\R_+$, singled out by the condition $\sqrt{-1}=i$.
We assume that $\sqrt{\la} \geq 0$ for $\la \in [0, +\infty)$.

Let $A_0^{\pm}$ be the self-adjoint operators
associated with the Neumann problem $y' (\pm 0) = 0$ for the differential
expression $\aexp [\cdot ]$ on $\R_\pm$.
The measures $ d\Sigma_{\Sl\pm} (t) $ are called
\emph{the spectral measures} of the operators $A_0^{\pm}$
since
\begin{equation*}
Q_{ \Sigma_{\scriptstyle \Sl\pm}} = \F_\pm A_0^{\pm} \F_\pm^{-1}
\end{equation*}
where $Q_{ \Sigma_{\scriptstyle \Sl\pm}}$ are
the operators of multiplication by t in the space
$L^2 (\R,d\Sigma_{\Sl\pm} (t))$ 
and $\F_\pm$ are the (generalized) Fourier transformations defined by
\begin{equation} \label{e F}
(\F_\pm f) (t) := \LimInMed \limits_{x_1 \rightarrow \pm \infty}
\pm \int_0^{x_1} f(x) c (x, \pm t ) dx .
\end{equation}
Here $\LimInMed $ denotes the strong
limit in $L^2 (\R,d\Sigma_{\Sl\pm})$.
Recall that $\F_\pm$ are unitary operators from $L^2 (\R_\pm)$
onto $ L^2 (\R , d\Sigma_{\Sl\pm} )$.

Note that $\supp d\Sigma_{\Sl\pm} = \sigma (Q_{\Sigma_{\scriptstyle \Sl\pm}}) = \sigma (A_0^\pm) $, that
\eqref{e MpmInt} gives a holomorphic continuation of
$\ M_{\Sl\pm} (\lambda)$ to $\ \C \setminus \supp d\Sigma_{\Sl\pm}$,
and that, in this domain, $\  M_{\Sl\pm} (\lambda) = M_{ \Sigma_{\scriptstyle \Sl\pm},
C_{\scriptstyle \Sl\pm}} (\lambda)$,
where 
\begin{gather} \label{e Cpm}
C_{\Sl\pm} := 
\int_\R \frac {t}{1+t^2} d\Sigma_{\Sl\pm} 
\end{gather}
and $M_{ \Sigma_{\scriptstyle \Sl\pm},
C_{\scriptstyle \Sl\pm}} (\lambda)$ are defined by 
(\ref{e Msc}).

\begin{thm}          \label{t SLModel}
Assume that conditions \eqref{e Aas} and \eqref{e Aas2} are fulfilled
 and the $J$-self-adjoint operator $A = (\sgn x) (-d^2/dx^2 + q(x))$ is defined as above.
Then $A $ is unitarily equivalent to the operator
$\widehat A = \widehat A \{ \Sigma_{\Sl+} , C_{\Sl+} , \Sigma_{\Sl-} , C_{\Sl-} \}$.
More precisely,
\begin{gather} \label{e FAF=whA}
(\F_+ \oplus \F_- ) A (\F_+^{-1} \oplus \F_-^{-1} ) =
\widehat A  \ .
\end{gather}
\end{thm}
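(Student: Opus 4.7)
The plan is to reduce Theorem \ref{t SLModel} to the abstract functional model of Theorem \ref{p CModel} by exhibiting concrete boundary triples $\Pi^\pm$ on the half-lines whose Weyl functions are the Titchmarsh-Weyl m-coefficients $M_{\Sl\pm}$, and by identifying the intertwining unitaries provided abstractly with the classical transforms $\F_\pm$. The starting observation is that, on $\R_+$, $A_{\max}^+ = -d^2/dx^2 + q$ coincides with the maximal Sturm-Liouville operator $L_{\max}^+$, while on $\R_-$, $A_{\max}^- = d^2/dx^2 - q = -L_{\max}^-$.

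\textbf{Step 1 (half-line boundary triples).} Set $\Gamma_0^\pm y := y'(0\pm)$ and $\Gamma_1^\pm y := -y(0\pm)$. A direct integration by parts on $\R_\pm$, combined with the limit-point assumption \eqref{e Aas2} at $\pm\infty$, verifies that $\Pi^\pm = \{\C, \Gamma_0^\pm, \Gamma_1^\pm\}$ are boundary triples for $A_{\max}^\pm$. The defect solutions $\psi_\pm$ of \eqref{e defM} satisfy $A_{\max}^\pm \psi_\pm = \la \psi_\pm$ together with $\psi_\pm(0\pm) = M_{\Sl\pm}(\la)$ and $\psi_\pm'(0\pm) = -1$, so direct substitution identifies the Weyl function of $\Pi^\pm$ with $M_{\Sl\pm}$. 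Since the condition $y, y' \in AC_{\loc}(\R)$ in $\dom(A)$ is equivalent to $y(0+)=y(0-)$ and $y'(0+)=y'(0-)$, i.e.\ to $\Gamma_j^+ P_+ y = \Gamma_j^- P_- y$ for $j = 0,1$, Theorem \ref{l G+=G-} (ii) shows that $A$ admits the coupling representation \eqref{e T=} with triples $\Pi^\pm$.

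\textbf{Step 2 (matching to the model).} Splitting the Cauchy kernel in \eqref{e MpmInt} and using \eqref{e Cpm} gives
\[
M_{\Sl\pm}(\la) = C_{\Sl\pm} + \int_\R \Bigl( \frac{1}{t-\la} - \frac{t}{1+t^2} \Bigr) d\Sigma_{\Sl\pm}(t) = M_{\Sigma_{\Sl\pm}, C_{\Sl\pm}}(\la),
\]
so the Weyl function of $\Pi^\pm$ coincides with the model Weyl function \eqref{e Msc} of $(\Sigma_{\Sl\pm}, C_{\Sl\pm})$. Classical Titchmarsh-Weyl theory supplies $\F_\pm A_0^\pm \F_\pm^{-1} = Q_{\Sigma_{\Sl\pm}}$ for the Neumann half-line operators $A_0^\pm = A_{\max}^\pm \uph \ker \Gamma_0^\pm$. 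Since $A_{\min}^\pm$ and $T_{\Sigma_{\Sl\pm}}$ are both simple symmetric operators with coinciding Weyl functions in the chosen triples, the uniqueness principle invoked at the end of the proof of Theorem \ref{p CModel} (Corollary 1 in \cite[Section 1.2]{DM91}) yields
\[
\F_\pm A_{\max}^\pm \F_\pm^{-1} = T_{\Sigma_{\Sl\pm}}^*, \qquad \Gamma_0^\pm = \Gamma_0^{\Sigma_{\Sl\pm}} \F_\pm, \qquad \Gamma_1^\pm = \Gamma_1^{\Sigma_{\Sl\pm}, C_{\Sl\pm}} \F_\pm.
\]
Combined with Step 1, these identities transport the coupling conditions defining $\dom(A)$ exactly onto those defining $\dom(\widehat A)$, yielding \eqref{e FAF=whA}.

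\textbf{Main obstacle.} The delicate point is that the uniqueness argument in Step 2 only determines the intertwining unitary up to a unimodular scalar, so one must verify that the \emph{canonical} $\F_\pm$ from \eqref{e F} already realizes the intertwining. This amounts to a direct computation via integration by parts against $c(x,\pm t)$: for $y \in \dom(A_{\max}^\pm)$ one shows $(\F_\pm A_{\max}^\pm y)(t) = t(\F_\pm y)(t) + y'(0\pm)$, whence the coefficient $\c$ in the decomposition \eqref{e whT*} of $\F_\pm y$ equals $y'(0\pm) = \Gamma_0^\pm y$. Controlling the boundary term at $\pm \infty$ uses the asymptotic \eqref{e asM} together with the limit-point hypothesis \eqref{e Aas2}; with the sign conventions in Step 1 chosen as above, no compensating phase is needed.
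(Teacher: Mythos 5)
Your argument reaches the right conclusion, but it is organized quite differently from the paper's proof, and two points need tightening. The paper never invokes Theorem \ref{l G+=G-} or the uniqueness principle of \cite{DM91} here: it works entirely with the two classical resolvent representations \eqref{e ResRepPsi}--\eqref{e ResRepF}, extracts from them the identities $\F_\pm \psi_\pm (\cdot,\la) = (t-\la)^{-1}$ and $y_\pm(0)=\int_\R (\F_\pm y_\pm)(t)\,d\Sigma_{\Sl\pm}(t)$ on $\dom(A_0^\pm)$, extends the latter to all of $\dom((A_{\min}^\pm)^*)$ via the von Neumann formula, and thereby computes \emph{both} boundary maps directly: $\Gamma_1^{\Sigma_{\Sl\pm},C_{\Sl\pm}}\F_\pm y_\pm = y_\pm(0)$ and $\Gamma_0^{\Sigma_{\Sl\pm}}\F_\pm y_\pm = -y'_\pm(0)$ (formulas \eqref{e G1}--\eqref{e G2}), after which the coupling conditions match \eqref{e domA} at once. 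Your route buys economy by delegating the operator identification to the abstract model of Theorem \ref{p CModel}, but then has to pay back essentially the same computation, which you correctly isolate as the main obstacle.

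The points to fix: \textbf{(a)} a sign inconsistency --- from your own formula $(\F_\pm A_{\max}^\pm y)(t) = t(\F_\pm y)(t) + y'(0\pm)$ and the normalization $T_\Sigma^* f = tf-\c$ of \eqref{e hatA*}, the constant is $\c = -y'(0\pm)$, not $y'(0\pm)$; hence the actual pullbacks are $\Gamma_0^{\Sigma_{\Sl\pm}}\F_\pm y = -y'(0\pm)$ and $\Gamma_1^{\Sigma_{\Sl\pm},C_{\Sl\pm}}\F_\pm y = +y(0\pm)$, i.e.\ the negatives of the triples you fixed in Step 1. This is harmless for the theorem, since the coupling conditions are invariant under flipping both components on both half-lines simultaneously, but the identities displayed in your Step 2 are false as written. \textbf{(b)} Your phase-fixing computation only pins down the $\Gamma_0$ component. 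To recover the $\Gamma_1$ identification you must either prove the identity $y_\pm(0)=\int_\R(\F_\pm y_\pm)\,d\Sigma_{\Sl\pm}$ directly (the paper's \eqref{e y0}, obtained from \eqref{e ResRepF}; it does not follow from your integration by parts), or argue that any unitary self-intertwiner of the \emph{simple} operator $T_{\Sigma_\pm}$ is a unimodular scalar, which is then forced to be $1$ by the $\Gamma_0$ computation. The latter works and closes your argument, but the simplicity argument behind the phrase ``up to a unimodular scalar'' is exactly the ingredient you leave unstated.
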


\begin{proof}

The proof is based on two following  representations
of the resolvent $\Rs_{A_0^\pm}$ (see \cite{LevSar,Tit62}):
\begin{gather}
(\Rs_{A_0^\pm} (\la ) f_\pm ) (x)= \mp \psi_\pm (x,\la ) \int_0^{\pm x} \! \! \! c(s,\pm \la) f
(s) ds  \mp c(x,\pm \la) \int_{\pm x}^{\pm \infty} \! \! \! \psi_\pm (s, \la) f (s) ds, \label{e ResRepPsi}
\\
(\Rs_{A_0^\pm} (\la ) f_\pm ) (x)= \int_\R \frac{c(x,\pm t) \ (\F_\pm f_\pm) (t) \
d\Sigma_\pm (t)}{t-\la} , \quad x \in \R_\pm .\label{e ResRepF}
\end{gather}

It is not difficult to see (e.g. \cite[Section 2.1]{KarMMM07}) that
\begin{gather} \label{e domA}
\dom (A) := \left\{ y \in \dom \left( (A_{\min}^+)^* \right) \oplus
\left( (A_{\min}^-)^* \right):
y(+0) = y(-0), y'(+0) = y' (-0) \right\} .
\end{gather}

Put $\widehat A_{\min}^\pm := \F_\pm A_{\min}^\pm \F_\pm^{-1}$ and recall that
$ \widehat A_0^\pm := \F_\pm A_0^\pm \F_\pm^{-1}$ is
the operator of multiplication by $t$ in the space
$L^2 (\R,d\Sigma_{\Sl\pm} (t))$, i.e.,
$\widehat A_0^\pm = Q_{\Sigma_{\scriptstyle \Sl\pm}}$.

Let functions $f \in L^2 (\R)$ and
$f_\pm \in L^2 (\R_\pm)$ be such that
$f = f_+ + f_-$. Denote $g^\pm (t) := (\F_\pm f_\pm )(t)$.
From \eqref{e ResRepF}
we get
\begin{gather} \label{e Rf0}
(\Rs_{A_0^\pm} (\la ) f_\pm ) (\pm 0)=
\int_\R \frac{ g^\pm (t) d\Sigma_{\Sl\pm} (t)}{t-\la} .
\end{gather}
Since
$ \Rs_{\wh A_0^\pm} (\la ) g^\pm (t) =
g^\pm (t)(t-\la)^{-1}  $,
we see that
\begin{gather} \label{e y0}
y_\pm (0) = \int_\R (\F_\pm y_\pm) (t) d\Sigma_{\Sl\pm} (t) \quad
\text{ for all } \ y_\pm \in \dom (A_0^\pm), \\
\text{and} \qquad
\widehat A_{\min}^\pm  =
Q_{\Sigma_{\scriptstyle \pm}} \upharpoonright \!
\dom (\widehat A_{\min}^\pm ) \ , \quad \notag \\
\dom (\widehat A_{\min}^\pm )= \{ \widehat y_\pm \in
\dom (Q_{\Sigma_{\scriptstyle \Sl\pm}}  ) \!:\
\int_\R \widehat y_\pm(t) d\Sigma_{\Sl\pm} (t) = 0 \} . \notag
\end{gather}
That is, $\widehat A_{\min}^\pm =
 T_{\Sigma_{\scriptstyle \Sl\pm}} $.

It follows from \eqref{e ResRepPsi} that
$
\left( \Rs_{A_0^\pm} (\la) f_\pm \right) (\pm 0) = \pm \int_0^{\pm \infty}
\psi_\pm (x, \la) f_\pm (x) dx $ for $ \lambda \notin \R$.
From this and \eqref{e Rf0}, we get
\begin{gather} \label{e Fpsi}
(\F_\pm \psi_\pm (\cdot,\la)) \ (t) \ =\  \frac 1{t-\la}
\ \  \ \in L^2 ( \R, d\Sigma_{\Sl\pm} )\ .
\end{gather}

Let $ y_\pm (x) \in \dom ( (A_{\min}^\pm)^* )$.
Then, by the von Neumann formula,
\begin{gather} \label{e N}
y_\pm (t) = y_{0\pm} (t) + c_1 \psi_\pm (t,i)+ c_2 \psi_\pm (t,-i) \ ,
\end{gather}
where $ y_{0\pm} (t) \in \dom ( A_{\min}^\pm )$
and $c_1, c_2 \in \C$ are certain constants.
Therefore \eqref{e defM} yields
\begin{gather*} 
y_\pm (0) = c_1 M_{\Sl\pm} (i)+ c_2 M_{\Sl\pm} (-i)
= c_1 \int_\R \frac 1{t-i} \ d\Sigma_{\Sl\pm} (t) +
 c_2 \int_\R \frac 1{t+i} \ d\Sigma_{\Sl\pm} (t) .
\end{gather*}
This, \eqref{e Fpsi}, and \eqref{e y0} implies that \eqref{e y0} holds for all
$ y_\pm (x) \in \dom ( (A_{\min}^\pm)^* )$. Taking \eqref{e Cpm} and \eqref{e GammaS} into account, we get
\begin{gather} \label{e G1}
y_\pm (0) = \int_\R (\F_\pm y_\pm) (t) \ d\Sigma_\pm (t)
= 
\Gamma_1^{\Sigma_{\Sl\pm}, C_{\Sl\pm}}  \F_\pm y_\pm \ .
\end{gather}
Further,
by \eqref{e N},
$y^\prime_\pm (0) = -c_1 - c_2 $.
On the other hand, it follows from $\widehat A_{\min}^\pm =
 T_{\Sigma_{\scriptstyle \Sl\pm}} $ and \eqref{e GammaS}
that $\Gamma_0^{\Sigma_{\Sl\pm}} \F y_{0\pm} = 0 $ and
$\Gamma_0^{\Sigma_{\Sl\pm}} (t-\la)^{-1} = 1$. 
Hence,
\begin{gather} \label{e G2}
\Gamma_0^{\Sigma_{\Sl\pm}} \F y_\pm = c_1 \Gamma_0^{\Sigma_{\Sl\pm}} \frac 1{t-i}+
c_2 \Gamma_0^{\Sigma_{\Sl\pm}} \frac 1{t+i} = c_1 + c_2 =
-y_\pm ^\prime (0).
\end{gather}
Combining \eqref{e domA}, \eqref{e G1}, and \eqref{e G2},
we get \eqref{e FAF=whA}.
\end{proof}

\begin{rem} \label{r simplicity}
Since the operators $T_{\Sigma_{\scriptstyle \Sl\pm}}$ are simple (see \cite[Proposition 7.9]{MM03}),
in passing it is proved that so are the operators $ A_{\min}^\pm $ and $ A_{\min}$. This proof of simplicity works in general case of Sturm-Liouville operator with
one turning point described in the beginning of this section. Formally, it removes extra smoothness assumptions on the coefficient $p$ imposed in
\cite{G72}. But actually it is just another version of the proof of \cite[Theorem 3]{G72} since the essence of both the proofs is based on Krein’s criterion for simplicity \cite[Section 1.3]{K49}.
\end{rem}

\section{Point and essential spectra of the model operator $\widehat A$ and of indefinite Sturm-Liouville operators \label{s point s}}

\subsection{Point spectrum of the model operator \label{ss point model}}

The main result of this section and of the paper is a description of the point spectrum and algebraic multiplicities of eigenvalues
of $\widehat A \{ \Sigma_+ , C_+ , \Sigma_- , C_- \}$.

First, to classify eigenvalues of the operator $T_\Sigma^*$ defined in Subsection \ref{ss FM Sym}, we introduce the following mutually
disjoint sets:
\begin{gather}
\mathfrak{A}_0 (\Sigma) = \left\{ \lambda \in \sigma_c (Q_\Sigma) \ :
\ \int_\R |t-\lambda|^{-2} d\Sigma (t) = \infty \right\} , \label{e fA0}\\
\mathfrak{A}_r (\Sigma) = \left\{ \lambda \not \in \sigma_p (Q_\Sigma) \ :
\ \int_\R |t-\lambda|^{-2} d\Sigma (t) <\infty \right\}, \qquad \notag\\
\mathfrak{A}_p (\Sigma) = \sigma_p (Q_\Sigma). \notag
\end{gather}
Observe that $\C = \mathfrak{A}_0 (\Sigma) \cup \mathfrak{A}_r (\Sigma) \cup
\mathfrak{A}_p (\Sigma) $ and
\begin{gather}
\mathfrak{A}_0 (\Sigma) = \left\{ \lambda \in \C \ :
\ \ker ( T_\Sigma^* - \lambda I) = \{ 0 \} \ \right\} ,
\notag
\\
\mathfrak{A}_r (\Sigma) = \left\{ \lambda \in \C :
\ \ker ( T_\Sigma^* - \lambda I) = \{ c(t-\lambda)^{-1}, \; c \in \C \} \right\}, \label{e Arker}
\\
\mathfrak{A}_p (\Sigma) = \left\{ \lambda \in \C \ :
\ \ker ( T_\Sigma^* - \lambda I) = \{ c\chi_{ \{\lambda\} } (t), \; c \in \C \} \label{e Apker}
\right\} .
\end{gather}

In this section we  denote for brevity $\Gamma_0^\pm :=\Gamma_0^{\Sigma_\pm}$,
$\Gamma_1^\pm :=\Gamma_1^{\Sigma_\pm,C_\pm}$, where  $\Gamma_0^{\Sigma_\pm}$, $\Gamma_1^{\Sigma_\pm,C_\pm}$ are linear mappings from $\dom(T_{\Sigma_\pm}^*)$ to $\C$ defined by \eqref{e GammaS}.

In this paper, for fixed $\lambda \in \R$, the notation $\frac {\chi_{ \R \setminus \{\la \}} (t)}{ (t-\la)^j } $ means the function that is equal to $0$ at $t=\lambda$ and $\frac 1{(t-\la)^j}$ for $t \neq \lambda$.
If $\lambda \not \in \R$, then $\frac {\chi_{ \R \setminus \{\la \}} (t)}{ (t-\la)^j } $ means just $\frac 1{(t-\la)^j}$.
In what follows the functions $\frac {\chi_{ \R \setminus \{\la \}} (t)}{ (t-\la)^j } $ and jump discontinuities of $\Sigma$ play an essential role.
Note that the set of jump discontinuities of $\Sigma$ coincides with $\mathfrak{A}_p (\Sigma) = \sigma_p (Q_\Sigma)$. If $\la \in \R \setminus \mathfrak{A}_p (\Sigma)$,  then $\frac {\chi_{ \R \setminus \{\la \}} (t)}{ (t-\la)^j } $ and $\frac 1{(t-\la)^j}$ belong to the same class of $L^2 (\R,d\Sigma)$ and any
of these two notations can be used. We also use notation $ d \Sigma (\{\lambda \}) := \Sigma (\lambda+0) - \Sigma (\lambda -0)$.

For the sake of simplicity,
we start from the case when
\begin{gather} \label{e CpmCond}
\int_\R (1+|t|)^{-1} d\Sigma_\pm < \infty \quad \text{and} \quad
C_\pm = \int_\R t(1+t^2)^{-1} d\Sigma_\pm,
\end{gather}
(which arises, in particular, in Section \ref{ss SLModel}) and then consider the general case.

\begin{thm} \label{t s p}
Let $\Sigma_\pm$ be nondecreasing scalar functions satisfying
\eqref{e S assump} and let $C_\pm$ be real constants.
Assume also that conditions \eqref{e CpmCond} are fulfilled. Then
the following statements describe the point spectrum of the operator
$\widehat A = \widehat A \{ \Sigma_+ , C_+ , \Sigma_- , C_- \}$.
\begin{description}
\item[1)]
If $\lambda \in \mathfrak{A}_0 (\Sigma_+) \cup \mathfrak{A}_0 (\Sigma_-)$,
then $ \lambda \not \in \sigma_p (\widehat A)$.
\item[2)]
If $\lambda \in \mathfrak{A}_p (\Sigma_+) \cap \mathfrak{A}_p (\Sigma_-)$,
then
\begin{description}
\item[(i)] $\lambda$ is an eigenvalue of $\widehat A$; the geometric multiplicity of $\lambda$ equals 1;
\item[(ii)] the eigenvalue $\lambda$ is simple (i.e., the algebraic and geometric multiplicities are equal to 1) if an only if at least one of the following conditions is not fulfilled:
\begin{gather} \label{e S-S=S-S}
d\Sigma_- (\{ \lambda \} ) = d\Sigma_+ ( \{ \lambda \} ),
\\      
\label{e Int+<Inf}
\int_{\R \setminus \{\lambda \}} \frac 1{|t-\lambda |^2} \ d \Sigma_+ (t) < \infty ,
\\      
\label{e Int-<Inf}
\int_{\R \setminus \{\lambda \}} \frac 1{|t-\lambda |^2} \ d \Sigma_- (t) < \infty ;
\end{gather}
\item[(iii)] if conditions \eqref{e S-S=S-S}, \eqref{e Int+<Inf} and
\eqref{e Int-<Inf} hold true, then the algebraic multiplicity of $\lambda$ equals
the greatest number $k$ ($k \in \{2,3,4,\dots\} \cup \{ +\infty \}$) such that the conditions
\begin{gather} \label{e frj<inf}
\int_{\R \setminus \{\lambda \}} \frac 1{|t-\lambda|^{2j}} \ d \Sigma_- (t) < \infty ,
\qquad
\int_{\R \setminus \{\lambda \}} \frac 1{|t-\lambda|^{2j}} \ d \Sigma_+ (t) < \infty ,
\\            
 \label{e frj=frj}
\int_{\R \setminus \{\lambda \}} \frac 1{(t-\lambda )^{j-1}} \ d\Sigma_- (t) =
\int_{\R \setminus \{\lambda \}} \frac 1{(t-\lambda )^{j-1}} \ d\Sigma_+ (t) ,
\end{gather}
are fulfilled for all natural $ j $ such that $ 2 \leq j \leq k-1 $
(in particular, $k=2$ if at least one of conditions \eqref{e frj<inf}, \eqref{e frj=frj} is not fulfilled for $j=2$).
\end{description}
\item[3)]
Assume that
$\lambda \in \mathfrak{A}_r (\Sigma_+) \cap \mathfrak{A}_r (\Sigma_-)$.
Then $\lambda \in \sigma_p (\widehat A)$ if and only if
\begin{gather} \label{e fr1=fr1}
\int_\R \frac 1{t-\lambda} d\Sigma_+ (t) =
\int_\R \frac 1{t-\lambda} d\Sigma_- (t) \ .
\end{gather}
If \eqref{e fr1=fr1} holds true, then the geometric multiplicity of $\lambda$
is 1, and the algebraic multiplicity is the greatest number $k$
($k \in \{1,2,3,\dots\} \cup \{ +\infty \}$) such that
the conditions
\begin{gather}
\int_{\R } \frac 1{|t-\lambda |^{2j}}
\ d \Sigma_- (t) < \infty ,
\qquad
\int_{\R } \frac 1{|t-\lambda |^{2j}} \ d \Sigma_+ (t) < \infty ,
\label{e frj<inf r}
\\
\int_{\R } \frac 1{(t-\lambda )^{j}} \ d\Sigma_- (t) =
\int_{\R } \frac 1{(t-\lambda )^{j}} \ d\Sigma_+ (t)
\label{e frj=frj r}
\end{gather}
are fulfilled for all $ j \in \N$ such that $ 1 \leq j \leq k $.
\item[4.]
If $\lambda \in \mathfrak{A}_p (\Sigma_+) \cap \mathfrak{A}_r (\Sigma_-)$
or $\lambda \in \mathfrak{A}_p (\Sigma_-) \cap \mathfrak{A}_r (\Sigma_+)$,
then $\lambda \not \in \sigma_p (\widehat A)$.
\end{description}
\end{thm}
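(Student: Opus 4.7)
The plan is to reduce the eigenvalue equation $\widehat A f = \la f$ to a componentwise problem on $\ker(T_{\Sigma_\pm}^* - \la I)$ together with the two coupling equations $\Gamma_0^+ f_+ = \Gamma_0^- f_-$ and $\Gamma_1^+ f_+ = \Gamma_1^- f_-$. By \eqref{e Arker}--\eqref{e Apker}, these kernels are $\{0\}$, $\mathrm{span}\{(t-\la)^{-1}\}$, or $\mathrm{span}\{\chi_{\{\la\}}\}$ according to whether $\la$ lies in $\mathfrak{A}_0$, $\mathfrak{A}_r$, or $\mathfrak{A}_p$, so the analysis splits into nine subcases indexed by the membership of $\la$ relative to $\Sigma_+$ and $\Sigma_-$. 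For each one-parameter candidate eigenvector I would evaluate $\Gamma_0^\pm$ and $\Gamma_1^\pm$ from \eqref{e GammaS} using the decomposition $f = f_Q + \c\,t(t^2+1)^{-1}$: the vector $c(t-\la)^{-1}$ in the $\mathfrak{A}_r$ case has $\c = c$ and yields $\Gamma_0^\pm = c$ together with $\Gamma_1^\pm$ a first inverse moment (condition \eqref{e CpmCond} absorbing the $C_\pm$-term into the integral), while the vector $c\chi_{\{\la\}}$ in the $\mathfrak{A}_p$ case lies in $\dom(Q_{\Sigma_\pm})$, giving $\Gamma_0^\pm = 0$ and $\Gamma_1^\pm = c\,d\Sigma_\pm(\{\la\})$. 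Parts 1) and 4) follow at once since the couplings force both coefficients to vanish whenever $\la$ lies in $\mathfrak{A}_0$ on one side or mixes $\mathfrak{A}_p$ with $\mathfrak{A}_r$; the $\mathfrak{A}_p\cap\mathfrak{A}_p$ case produces a one-dimensional eigenspace governed by $c_+ d\Sigma_+(\{\la\}) = c_- d\Sigma_-(\{\la\})$, giving 2)(i); and the $\mathfrak{A}_r\cap\mathfrak{A}_r$ case gives existence in 3) with \eqref{e fr1=fr1} appearing as the $\Gamma_1$-match.

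For the algebraic-multiplicity statements in 2)(ii), 2)(iii), and the length count in 3), I would set up the Jordan-chain recursion $(\widehat A - \la I)g_j = g_{j-1}$ componentwise and solve each step using the identity $T_{\Sigma_\pm}^* u = tu - \Gamma_0^\pm u$ from \eqref{e hatA*}. Induction then forces, in the $\mathfrak{A}_p\cap\mathfrak{A}_p$ setting, the shape
\[
g_{j,\pm}(t) \;=\; \sum_{i=1}^{j-1} \beta_{j,i}^\pm \,\frac{\chi_{\R\setminus\{\la\}}(t)}{(t-\la)^i} \;+\; \gamma_j^\pm\,\chi_{\{\la\}}(t),
\]
with $\beta_{j,i}^\pm$ determined by the previous-level $\gamma$'s and $\gamma_j^\pm$ free modulo the preceding root subspace; the $\mathfrak{A}_r\cap\mathfrak{A}_r$ case produces the analogous shape with sum running from $i=1$ to $j$ and no jump term. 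Three families of conditions then appear synchronously at each level: (a) $g_{j,\pm}\in L^2(d\Sigma_\pm)$, producing the integrability conditions \eqref{e Int+<Inf}--\eqref{e Int-<Inf}, \eqref{e frj<inf}, and \eqref{e frj<inf r}; (b) membership in $\dom(T_{\Sigma_\pm}^*)$, which pins down $\Gamma_0^\pm g_{j,\pm}$ in terms of the previous $\gamma$'s and propagates the $\Gamma_0$-coupling; and (c) the $\Gamma_1$-coupling, which after \eqref{e CpmCond} absorbs $C_\pm$ becomes exactly the moment identities \eqref{e frj=frj} and \eqref{e frj=frj r}. A short explicit computation at $j=2$ shows that, in the point-mass setting, the $\Gamma_0$-matching $\Gamma_0^+ g_{2,+} = \Gamma_0^- g_{2,-}$ combined with the first-level identity $c_+ d\Sigma_+(\{\la\}) = c_- d\Sigma_-(\{\la\})$ forces precisely the compatibility \eqref{e S-S=S-S}, which is why that hypothesis surfaces in 2)(ii) as the minimal obstruction to length-$2$ extension. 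The chain extends as long as (a)--(c) hold through index $k-1$, which identifies the algebraic multiplicity with the $k$ described in 2)(iii) and 3).

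The main obstacle is the inductive bookkeeping: at every level one must simultaneously maintain $L^2(d\Sigma_\pm)$-integrability (ever higher inverse moments), the value of $\Gamma_0^\pm g_{j,\pm}$ that $\dom(T_{\Sigma_\pm}^*)$ forces on the leading coefficient, and the new moment-matching coming from the $\Gamma_1$-coupling, all while keeping $g_{j,\pm}$ well-defined modulo the already-constructed chain. The $\mathfrak{A}_p\cap\mathfrak{A}_p$ setting needs separate care because the base eigenvector lives at the point mass, so the recursion introduces an extra jump term already at the first extension and the point-mass weights interact with the $\Gamma_1$-moment through \eqref{e S-S=S-S}; by contrast the $\mathfrak{A}_r\cap\mathfrak{A}_r$ setting of 3) is index-shifted, and the moment identities \eqref{e frj=frj r} begin at $j=1$ with \eqref{e fr1=fr1} itself, with no analogue of \eqref{e S-S=S-S} needed.
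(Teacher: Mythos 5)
Your proposal follows essentially the same route as the paper's proof: reduce $\widehat A y=\la y$ to the kernels of $T_{\Sigma_\pm}^*-\la$ plus the two $\Gamma$-coupling conditions, evaluate $\Gamma_0^\pm,\Gamma_1^\pm$ on $c(t-\la)^{-1}$ and $c\chi_{\{\la\}}$ (with \eqref{e CpmCond} turning $\Gamma_1^\pm$ into the plain moment $\int y_\pm\,d\Sigma_\pm$), and then run the Jordan-chain recursion via $T_{\Sigma_\pm}^*u=tu-\Gamma_0^\pm u$, extracting at each level the $L^2$-integrability, $\Gamma_0$-matching (which is exactly where \eqref{e S-S=S-S} arises), and $\Gamma_1$-moment-matching conditions. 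The identification of where each hypothesis enters, the shapes of the chain vectors, and the index shift between cases 2) and 3) all agree with the paper's argument, so the proposal is correct and not a genuinely different approach.
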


\begin{proof}
A vector
$y = \left( \begin{array}{c} y_- \\ y_+ \end{array} \right) \in L^2(\R,d\Sigma_+)\oplus L^2(\R,d\Sigma_-)$
is a solution of the equation
$\wh A y = \la y$ if and only if
\begin{gather*} 
 y \ \in \ \ker (T_{\Sigma_+} ^* -\la I) \oplus
\ker (T_{\Sigma_-}^* -\la I) \quad \text{and} \quad
y \in \dom (\wh A).
\end{gather*}
Recall that $h = \left( \begin{array}{c} h_- \\ h_+ \end{array} \right) \in \dom (T_{\Sigma_+}^*) \oplus \dom (T_{\Sigma_-}^*)$ belongs to $\dom (\wh A)$ if and only if
\begin{gather}
 \label{e hBC}
\Gamma_0^- h_-
= \Gamma_0^+ h_+ \  , \quad \Gamma_1^- h_-
= \Gamma_1^+ h_+ \ .
\end{gather}

It follows from (\ref{e GammaS}) that
\begin{gather} \label{e whG1gen}
\Gamma_1^\pm h_\pm = C_\pm \Gamma_0^\pm h_\pm+
\int_\R \left( h_\pm (t) - \frac{t\Gamma_0^\pm h_\pm}{t^2+1} \right) d\Sigma_\pm (t)  , \quad h_\pm \in \dom (T_{\Sigma_\pm}^*) .
\end{gather}
(\ref{e CpmCond}) and (\ref{e GammaS}) yield $ h_\pm (t) \in L^1 (\R, d\Sigma_\pm)$ for arbitrary $ h_\pm (t) \in \dom ( T_{\Sigma_\pm} ^*) $, and using (\ref{e whG1gen}), we obtain
\begin{gather} \label{e whG1}
\Gamma_1^\pm y_\pm = \int_\R y_\pm (t) d\Sigma_\pm (t) \ .
\end{gather}

If $\la \in \mathfrak{A}_r $, then $\frac1{t-\la} \in L^2 (\R, d\Sigma_\pm )$ and \eqref{e GammaS} (or even simpler \eqref{e hatA*})
yields that $\frac1{t-\la} \in \dom ( T_{\Sigma_\pm} ^*)$ and
\begin{gather} \label{e whG0}
\Gamma_0^\pm \frac 1{t -\la} = 1 \ .
\end{gather}
The function $ \chi_{\{\la\}} (t)$, $\lambda \in \R$, is a nonzero vector
in $L^2 (\R, d\Sigma^\pm )$ exactly when $\la \in \mathfrak{A}_p$; in this case,
\begin{gather} \label{e Gev12} 
\Gamma_0^\pm \chi_{\{\la\}} = 0 , \qquad
\Gamma_1^\pm \chi_{\{\la\}} =
\int_{ \{\la \} } d\Sigma_\pm (t) = d\Sigma_\pm (\{\la\}) \ .
\end{gather}

\textbf{1)} \ Suppose $\wh A y=\la y$ and consider the case
$\ker (  T_{\Sigma_-}^* - \la) = \{ 0 \} $
(the case $\ker ( T_{\Sigma_+}^* - \la) = \{ 0 \} $
is analogous).
Then $ y_- = 0$ and, by \eqref{e hBC}, we get
$ \Gamma_0^+ y_+ =0$,
$ \Gamma_1^+ y_+ =0$.
Hence $y_+ \in \dom ( Q_{\Sigma_+})$ (see (\ref{e GammaS})),  and
$ Q_{\Sigma_+} y_+ = \la y_+$.
This implies  $y_+ (t) = c_1 \chi_{\{\la\}} (t)$,
$c_1 \in \C$. On the other hand, $0=\Gamma_1^+ y_+ (t) =
\int_\R y_+ (t) d\Sigma_+ (t) $.
Thus $c_1 = 0$ and $y_+ = 0$ a.e. with respect to the measure
$d\Sigma_+$.

\textbf{2)} \ Let
$\lambda \in \mathfrak{A}_p (\Sigma_+) \cap \mathfrak{A}_p (\Sigma_-)$.
By \eqref{e Apker}, we have
\[
y (t)=
\left( \begin{array}{c}
c_1^- \chi_{\{\la \}} (t) \\ c_1^+ \chi_{\{\la \}} (t)
\end{array} \right), \quad c_1^\pm \in \C.
\]
Since
$\lambda \in \mathfrak{A}_p (\Sigma_\pm) $, we see that $\la \in \R$ and  $ d\Sigma_\pm ( \{\la \}) \neq 0$.
Taking into account \eqref{e Gev12}, we see that
system \eqref{e hBC} is equivalent to
$ c_1^- d\Sigma_- (\{\la \}) =
 c_1^+ d\Sigma_+ (\{\la \})$.
Therefore the geometric multiplicity of $\lambda$ equals 1 and
\begin{equation} \label{e y0 c2}
\wh y_0 =  \left( \begin{array}{c}
\frac 1{ d\Sigma_- (\{\la \})} \chi_{\{\la \}} (t) \\
\frac 1{ d\Sigma_+ (\{\la \})} \chi_{\{\la \}} (t)
\end{array} \right) \quad \text{ is one of corresponding eigenvectors of}
\ \wh A .
\end{equation}

Let
$ y_1 = \left( \begin{array}{c}
 y_1^- \\ y_1^+
\end{array} \right)$
and
$\wh A y_1 - \la  y_1 = y_0 $.
By \eqref{e hatA*}, we have
\[
\left( \begin{array}{c}
t y_1^- (t) \\ t y_1^+ (t)
\end{array} \right)
-
\left( \begin{array}{c}
\Gamma_0^- y_1^- \\ \Gamma_0^+ y_1^+
\end{array} \right) - \la
\left( \begin{array}{c}
 y_1^- (t) \\  y_1^+ (t)
\end{array} \right)
=
 y_0 \ .
\]
Thus,
\[
(t-\la ) y_1^\pm (t) = \frac 1{d\Sigma_\pm (\{\la\})}
 \chi_{ \{\la \}} (t) + \Gamma_0^\pm y_1^\pm \ .
\]
Choosing $t=\la$, we obtain
\begin{gather} \label{e G0=c=S}
\Gamma_0^\pm y_1^\pm =
- \frac 1{d\Sigma_\pm (\{\la\})} \
\neq 0.
\end{gather}
Therefore,
\begin{gather} \label{e y1p}
y_1^\pm = -\frac 1{ d\Sigma_\pm (\{\la \})} \:
\frac {\chi_{ \R \setminus \{\la \}}(t)}{ t-\la }   +
c_2^\pm \chi_{ \{\la \}} (t)\ ,
\end{gather}
where $ c_2^\pm \in \C$.
The conditions $y_1^+ \in L^2 (\R, d\Sigma_+ )$ and
$y_1^- \in L^2 (\R, d\Sigma_- )$ are
equivalent to (\ref{e Int+<Inf}) and (\ref{e Int-<Inf}),
respectively.

Assume that \eqref{e Int+<Inf} and \eqref{e Int-<Inf}
are fulfilled.
By \eqref{e whG1}, we have
\begin{gather*} 
\Gamma_1^\pm y_1^\pm = - \frac 1{ d\Sigma_\pm (\{\la\}) }
\int_{\R \setminus \{\la \}} \frac 1{t-\la} \ d \Sigma_\pm (t)
+ 
c_2^\pm  d\Sigma_\pm (\{\la\}).
\end{gather*}
The latter and \eqref{e G0=c=S} implies that $\wh y_1 \in \dom (\wh A)$
if and only if the conditions
\eqref{e S-S=S-S} and
\begin{gather}
- \frac 1{ d\Sigma_- (\{\la\}) }
\int_{\R \setminus \{\la \}} \frac 1{t-\la} \ d \Sigma_- (t) +
c_2^-  d\Sigma_- (\{\la \}) = \notag \\ =
- \frac 1{ d\Sigma_+ (\{\la \}) }
\int_{\R \setminus \{\la \}} \frac 1{t-\la} \ d \Sigma_+ (t) +
c_2^+  d \Sigma_+ (\{\la \}) \label{e hBC1 p}
\end{gather}
are fulfilled.
Thus, the quotient space $\ker (\wh A - \la)^2 / \ker (\wh A - \la)
\neq \{ 0 \}$
if and only if the conditions \eqref{e S-S=S-S}, \eqref{e Int+<Inf},
and \eqref{e Int-<Inf} are satisfied.
In this case, generalized eigenvectors of first order
$ y_1$ have the form (\ref{e y1p}) with constants
$c_2^\pm $ such that (\ref{e hBC1 p}) holds.

Assume that all condition mentioned above are satisfied.
Then $ \dim \ker (\wh A - \la)^2 / \ker (\wh A - \la)= 1$
and one of generalized eigenvectors of first order is given by the constants
\[
c_2^\pm = -\alpha_1^2
\int_{\R \setminus \{ \la \}} \frac 1{t-\la} \ d \Sigma_\mp (t) ,
\]
where
$ \quad
\alpha_1 := \frac 1{d\Sigma_- (\{\la \})} =
\frac 1{d\Sigma_+ (\{ \la \}) } \ .
$

If $ y_2  = \left( \begin{array}{c}
y_2^- \\ y_2^+
\end{array} \right)$ and
$\wh A y_2 -\la y_2 = y_1$,
then
\begin{equation} \label{t-la y2}
(t-\la ) y_2^\pm (t) \, = \, y_1^\pm (t) + \Gamma_0^\pm y_2^\pm
\, = \,
- \alpha_1 \frac {\chi_{ \R \setminus \{\la \}} (t)}{ t-\la }   +
c_2^\pm \chi_{ \{\la \}} (t) + \Gamma_0^\pm \, y_2^\pm \ .
\end{equation}
For $t=\la$ we have
\begin{gather} \label{e t=la2 p}
\Gamma_0^\pm y_2^\pm = - c_2^\pm .
\end{gather}
Consequently,
\begin{gather} \label{e y2p}
y_2^\pm = - \alpha_1 \frac {\chi_{ \R \setminus \{\la \}}(t)}{ (t-\la)^2 }
- c_2^\pm \frac {\chi_{ \R \setminus \{\la \}}(t)}{ t-\la } +
c_3^\pm \chi_{ \{\la \}} (t) , \quad c_3^\pm \in \C.
\end{gather}
By (\ref{e t=la2 p}), conditions (\ref{e hBC})
for $ y_2$ has the form
\begin{gather} \label{e hBC22 p}
c_2^- = c_2^+ \ ,
\end{gather}
\begin{gather*} 
-\alpha_1 \int_{\R \setminus \{\la \}} \frac 1{(t-\la)^2} \ d \Sigma_- (t) -
c_2^- \int_{\R \setminus \{\la \}} \frac 1{t-\la} \ d \Sigma_- (t) +
c_3^-  \alpha_1^{-1}
= \\ =
-\alpha_1 \int_{\R \setminus \{\la \}} \frac 1{(t-\la)^2} \ d \Sigma_+ (t) -
c_2^+ \int_{\R \setminus \{\la \}} \frac 1{t-\la} \ d \Sigma_+ (t) +
c_3^+  \alpha_1^{-1} .
\end{gather*}
Thus $y_2$ exists if and only if
$\dfrac {\chi_{ \R \setminus \{\la \}}(t)}{ (t-\la)^2 } \in
L^2 (\R, d\Sigma_\pm )$ and \eqref{e hBC22 p} is fulfilled.
This is equivalent \eqref{e frj<inf} and \eqref{e frj=frj} for $j=2$.

Continuing this line of reasoning, we obtain part 2) of the theorem.

3) \ The idea of the proof for part 3) is
similar to that of part 2), but calculations are simpler.
Let
$\lambda \in \mathfrak{A}_r (\Sigma_+)
\cap \mathfrak{A}_r (\Sigma_+)$.
Then
$ \displaystyle \quad y (t)=
\left( \begin{array}{c}
c_1^- \frac 1{t-\la} \\ c_1^+ \frac 1{t-\la}
\end{array} \right)  .
$
Hence \eqref{e hBC} has the form
\[
c_1^- = c_1^+ ,
\qquad
c_1^- \int_\R \frac 1{t-\la} d \Sigma_- (t) =
c_1^+ \int_\R \frac 1{t-\la} d \Sigma_+ (t) \ .
\]
Consequently $\la$ is an eigenvalue of $\wh A$
if and only if \eqref{e fr1=fr1} holds true;
in this case the geometric multiplicity is 1
and
$ y_0 =  \left( \begin{array}{c}
\frac 1{t-\la} \\ \frac 1{t-\la}
\end{array} \right) $
is a corresponding eigenvector of $\wh A$.

Let $\wh A y_1 - \la  y_1 = y_0 $ where
 $ y_1 = \left( \begin{array}{c}
 y_1^- \\  y_1^+
\end{array} \right)$.
Then
\[
(t-\la ) y_1^\pm (t) = \frac 1{t-\la} + c_2^\pm,
\qquad c_2^\pm= \Gamma_0^\pm  y_1^\pm .
\]
Therefore
$ 
y_1^\pm =
\frac 1{ (t-\la )^2 }   + \frac { c_2^\pm }{ t-\la } . \
$
The case $ y_1^\pm \in L^2 (\R, d\Sigma_\pm )$
is characterized by \eqref{e frj<inf r} with $j=2$.
Conditions \eqref{e hBC} become
\begin{gather*}
\int_{\R } \left( \frac 1{(t-\la)^2} + \frac {c_2^-}{t-\la} \ \right)
d \Sigma_- (t) =
\int_{\R } \left( \frac 1{(t-\la)^2} + \frac {c_2^+}{t-\la} \  \right)
d \Sigma_+ (t) ,
\end{gather*}
\[
c_2^- =c_2^+ \ .
\]
Taking into account \eqref{e fr1=fr1}, we see
that the generalized eigenvector $\wh y_1$ exists if and only if
 conditions \eqref{e frj<inf r}, \eqref{e frj=frj r} are
satisfied for $j=2$.  Continuing this line of reasoning, we obtain part 3) of the theorem.

4) \ Suppose
$\lambda \in \mathfrak{A}_p (\Sigma_+)
\cap \mathfrak{A}_r (\Sigma_-)$
(the case $\lambda \in \mathfrak{A}_p
(\Sigma_-) \cap \mathfrak{A}_r (\Sigma_+)$ is similar).
Then \quad
$ \displaystyle \wh y (t)=
\left( \begin{array}{c}
c_1^- \frac 1{t-\la} \\ c_1^+ \chi_{\{\la \}} (t)
\end{array} \right)
$
and \eqref{e hBC} has the form
\[
c_1^- =0, \qquad c_1^- \int_\R \frac 1{t-\la} d \Sigma_- (t) =
c_1^+ d\Sigma_+ (\{\la \} )  \ .
\]
Thus $ c_1^- = c_1^+ =0$ and $\la \not\in \sigma_p (\wh A)$.

\end{proof}

Now we consider the general case when the functions $\Sigma_\pm $
satisfy (\ref{e S assump}) and $C_\pm$ are arbitrary real constants.

\begin{lem} \label{l eq cond}
Let $k \in \N$ and let one of the following two assumptions be fulfilled:
\begin{description}
\item[(a)] $\lambda \in \C\setminus \R$ \quad or
\item[(b)] $\lambda \in \R $, \quad  $d\Sigma_+ (\{ \lambda \}) = d\Sigma_- (\{ \lambda \})$,  \quad
$\frac { \chi_{\R \setminus \{\lambda\} }(t) } {(t-\lambda )^{k}} \in
L^2 (\R, d\Sigma_+)$, \quad  \\
and $\frac { \chi_{\R \setminus \{\lambda\} }(t) } {(t-\lambda )^{k}} \in L^2 (\R, d\Sigma_-)$.
\end{description}
Then $\frac { \chi_{\R \setminus \{\lambda\} } (t)} {(t-\lambda )^{k}} \in \dom (T_{\Sigma_+}^*)$, \quad  $\frac { \chi_{\R \setminus \{\lambda\} } (t)} {(t-\lambda )^{k}} \in \dom (T_{\Sigma_-}^*)$,
and the following two statements are equivalent:
\begin{description}
\item[(i)] $ \Gamma_1^- \frac { \chi_{\R \setminus \{\lambda\} } (t)} {(t-\lambda )^{k}}
= \Gamma_1^+ \frac { \chi_{\R \setminus \{\lambda\} } (t)} {(t-\lambda )^{k}} $;
\item[(ii)] $ \lim\limits_{\substack{\ep \to 0 \\ \ep \in \R}} \MpmMm^{(k-1)} (\lambda+i\ep) = 0$, where the function $ \MpmMm $ is defined by \\
$ \MpmMm := M_{\Sigma_+,C_+} - M_{\Sigma_-,C_-} $ and $ \MpmMm^{(j)} $ is its $j$-th derivative ($ \MpmMm^{(0)} = \MpmMm $).
\end{description}

If, additionally, $\lambda \not \in \sigma_{\ess} (Q_{\Sigma_+}) \cup \sigma_{\ess} (Q_{\Sigma_-})$, then statements (i) and (ii) are equivalent to
\begin{description}
\item[(iii)] the function
$\MpmMm $ is analytic in a certain neighborhood of $\lambda$ and \\
$ \MpmMm^{(k-1)} (\lambda) = 0 $. (If $M_{\Sigma_+,C_+} - M_{\Sigma_-,C_-}$ is defined in a punctured neighborhood of $\lambda$ and
has a removable singularity at $\lambda$, then we assume that $\MpmMm$ is analytically extended over $\lambda$.)
\end{description}
\end{lem}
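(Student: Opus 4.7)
First I verify that $f(t) := \chi_{\R \setminus \{\la\}}(t)/(t-\la)^k$ lies in $\dom(T_{\Sigma_\pm}^*)$ using the characterisation \eqref{e hatA*}: $f \in \dom(T_{\Sigma_\pm}^*)$ iff $tf(t) - \c_\pm \in L^2(\R, d\Sigma_\pm)$ for some $\c_\pm$. The decomposition $tf(t) = \la f(t) + \chi_{\R \setminus \{\la\}}(t)/(t-\la)^{k-1}$ gives this with $\c_\pm = 0$ when $k \geq 2$ (the estimate $|t-\la|^2 \lesssim 1 + t^2$ reduces $L^1$-control of $1/|t-\la|^{2(k-1)}$ to the hypothesis plus \eqref{e S assump}) and with $\c_\pm = 1$ when $k = 1$ (the defect $\chi_{\R \setminus \{\la\}} - 1 = -\chi_{\{\la\}}$ has finite $d\Sigma_\pm$-mass, since $d\Sigma_\pm$ is locally finite). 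Then \eqref{e whT*} and \eqref{e GammaS} produce explicit formulae for $\Gamma_0^\pm f \in \{0,1\}$ and $\Gamma_1^\pm f$.

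Next, termwise differentiation of \eqref{e Msc} on $\C \setminus \R$ yields
\[
\Phi^{(k-1)}(\la + i\ep) = (k-1)! \int_\R \frac{d\Sigma_+(t) - d\Sigma_-(t)}{(t - \la - i\ep)^k}
\]
for $k \geq 2$; the analogous expression for $k=1$ carries extra $(C_+ - C_-)$ and $t/(1+t^2)$ contributions which are precisely those appearing in $\Gamma_1^+ f - \Gamma_1^- f$ when $\c_\pm = 1$. Subtracting $(k-1)!(\Gamma_1^+ f - \Gamma_1^- f)$ from $\Phi^{(k-1)}(\la + i\ep)$ therefore reduces the problem to
\[
(k-1)! \int_\R \Bigl[(t - \la - i\ep)^{-k} - \chi_{\R \setminus \{\la\}}(t)(t-\la)^{-k}\Bigr] \bigl(d\Sigma_+(t) - d\Sigma_-(t)\bigr).
\]
The singleton contribution equals $(k-1)!(-i\ep)^{-k}[d\Sigma_+(\{\la\}) - d\Sigma_-(\{\la\})]$, which vanishes trivially in case (a) (where $\chi_{\{\la\}}$ is zero on $\R$) and by hypothesis in case (b); this point-mass cancellation is the essential mechanism of the lemma.

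The main obstacle is to show that the remaining integral over $\R \setminus \{\la\}$ tends to $0$ as $\ep \to 0$. The integrand converges to $0$ pointwise, and $|t - \la - i\ep| \geq |t - \la|$ dominates it by $2/|t-\la|^k$. Near $\la$ this dominator lies in $L^1(d\Sigma_\pm)$ by Cauchy--Schwarz applied to $d\Sigma_\pm$ restricted to a bounded punctured neighbourhood (on which $d\Sigma_\pm$ is finite, and $1/|t-\la|^k \in L^2(d\Sigma_\pm)$ by assumption). Away from $\la$, for $k \geq 2$ the dominator is $O(1/(1+t^2))$ and integrable by \eqref{e S assump}; for $k=1$ one uses instead the sharper identity $(t-\la-i\ep)^{-1} - (t-\la)^{-1} = i\ep/[(t-\la-i\ep)(t-\la)]$, bounded by $\ep/|t-\la|^2$ on $\{|t-\la| \geq 1\}$, which gives an $O(\ep)$ outer contribution. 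Dominated convergence then proves (i) $\Leftrightarrow$ (ii).

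For the additional equivalence with (iii), the hypothesis $\la \notin \sigma_{\ess}(Q_{\Sigma_\pm})$ says that $\la$ is either outside $\supp d\Sigma_\pm$ (in which case each $M_{\Sigma_\pm, C_\pm}$ is already analytic at $\la$) or an isolated atom of $d\Sigma_\pm$ (in which case each $M_{\Sigma_\pm, C_\pm}$ has a simple pole at $\la$ with residue $-d\Sigma_\pm(\{\la\})$). In the atom case the two residues agree under (b), so these poles cancel and $\Phi$ has a removable singularity at $\la$; in either case $\Phi$ extends analytically across $\la$, and the analytic value of $\Phi^{(k-1)}$ at $\la$ coincides with the nontangential limit appearing in (ii), yielding (ii) $\Leftrightarrow$ (iii).
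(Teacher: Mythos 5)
Your proposal is correct and follows essentially the same route as the paper's proof: explicit computation of $\Gamma_0^\pm$ and $\Gamma_1^\pm$ on $\chi_{\R\setminus\{\la\}}(t)(t-\la)^{-k}$, identification of $\lim_{\ep\to 0}\MpmMm^{(k-1)}(\la+i\ep)$ with $(k-1)!\bigl(\Gamma_1^+ - \Gamma_1^-\bigr)$ via cancellation of the point masses at $\la$ (using $d\Sigma_+(\{\la\})=d\Sigma_-(\{\la\})$) together with dominated convergence based on the $L^2$ hypothesis, and the isolated-atom/removable-singularity argument for (iii). The only cosmetic difference is that the paper isolates the far-from-$\la$ part of the integral as an analytic function $\I_\infty$, whereas you dominate it directly (with the $i\ep/[(t-\la-i\ep)(t-\la)]$ identity for $k=1$); both are sound.
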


\begin{proof}
We assume here and below that $j \in \N$.

First note that if $\lambda \not \in \sigma_{ess} (Q_{\Sigma_\pm})$, then $\frac { \chi_{\R \setminus \{\lambda\} } } {(t-\lambda )^{j}} \in L^2 (\R, d\Sigma_\pm)$ for any $j \in \N$, and using the definition of $\dom ( T_{\Sigma_\pm}^*)$, we see that
$\frac { \chi_{\R \setminus \{\lambda\} } } {(t-\lambda )^{j}} \in \dom ( T_{\Sigma_\pm}^*)$ for any $j$.

Generally, the last statement is not true for $\lambda \in \sigma_{ess} (Q_{\Sigma_\pm})$. But under assumptions of the lemma, we have
\begin{equation} \label{e -j<inf pm}
\int_{\R \setminus \{ \lambda \} } \frac{1}{|t-\lambda|^{2j}} d\Sigma_\pm (t) < \infty
\end{equation}
for $j = k$. Taking into account the first assumption in (\ref{e S assump}), we see that (\ref{e -j<inf pm}) is valid for
all $j \leq k$. The latter implies that
$\frac { \chi_{\R \setminus \{\lambda\} } (t)} {(t-\lambda )^{j}} \in \dom ( T_{\Sigma_\pm}^*)$ for
all $j \leq k$.
Moreover,
$\frac { \chi_{\R \setminus \{\lambda\} } (t)} {(t-\lambda )^{j}} \in \dom (Q_{\Sigma_\pm})$ if $2 \leq j \leq k$ (assuming $k \geq 2$). Therefore,
\begin{equation*}
\Gamma_0^\pm \ \frac { \chi_{\R \setminus \{\lambda\} } (t)} {(t-\lambda )^{j}} = 0 , \quad  2 \leq j \leq k \ .
\end{equation*}
The last statement does not hold in the case $j=1$. Using (\ref{e hatA*}), one has
\begin{equation} \label{e chi1}
\Gamma_0^\pm \ \frac { \chi_{\R \setminus \{\lambda\} } (t)} {t-\lambda } = 1.
\end{equation}
Eqs.~(\ref{e GammaS}) (see also (\ref{e whG1gen})) allow us to conclude that

\begin{eqnarray}
& \Gamma_1^\pm \ \frac { \chi_{\R \setminus \{\lambda\} } (t) } {t-\lambda } =
C_\pm + \int_\R \left( \frac { \chi_{\R \setminus \{\lambda\} } (t)} {t-\lambda } - \frac t{t^2+1}  \right) d\Sigma_\pm (t) , \label{e G1pm 1}\\
& \Gamma_1^\pm \ \frac { \chi_{\R \setminus \{\lambda\} } (t)} {(t-\lambda )^{j}} =
\int_\R \ \frac { \chi_{\R \setminus \{\lambda\} } (t)} {(t-\lambda )^j} \ d\Sigma_\pm (t) \ \qquad \text{if} \quad 2 \leq j \leq k \ . \label{e G1pm k}
\end{eqnarray}

If $\lambda \not \in \sigma(Q_{\Sigma_\pm})$ (in particular, if $\lambda \not \in \R$),
then (\ref{e Msc}) shows that
\[
\Gamma_1^\pm \ \frac { \chi_{\R \setminus \{\lambda\} } (t)} {(t-\lambda )^{j}} = \Gamma_1^\pm \ \frac { 1} {(t-\lambda )^{j}} = (j-1)! \ M_{\Sigma_\pm,C_\pm}^{(j-1)} (\lambda).
\]
 This prove the equivalence of (i), (ii),
and (iii) for the case when $\lambda \not \in \sigma(Q_{\Sigma_+}) \cup \sigma(Q_{\Sigma_-})$ (this simplest case explains the crux of the lemma).

Consider the case $\lambda \in \sigma(Q_{\Sigma_+}) \cup \sigma(Q_{\Sigma_-})$ and $\lambda \not \in \sigma_{\ess} (Q_{\Sigma_+}) \cup \sigma_{\ess} (Q_{\Sigma_-})$.
The assumptions of the lemma state that $d\Sigma_+ (\{ \lambda \}) = d\Sigma_- (\{ \lambda \})$.
So $\lambda $ is an isolated eigenvalue of both the operators $ Q_{\Sigma_+}$ and $ Q_{\Sigma_-}$
and is an isolated jump discontinuity of $\Sigma_+$ and $\Sigma_-$. This and
$d\Sigma_+ (\{ \lambda \}) = d\Sigma_- (\{ \lambda \})$ imply that $\MpmMm $ has a removable singularity at $\lambda$ and can be considered as an analytic function in a certain neighborhood of $\lambda$. Moreover,
\[
(k-1)! \ \MpmMm^{(k-1)} (\lambda) = \Gamma_1^+ \frac { \chi_{\R \setminus \{\lambda\} } } {(t-\lambda )^{k}} - \Gamma_1^- \frac { \chi_{\R \setminus \{\lambda\} } } {(t-\lambda )^{k}},
\]
 and (i) $\Leftrightarrow$ (ii) $\Leftrightarrow$
(iii) is shown again.

Now let assumption (b) be satisfied and let $\lambda \in \sigma_{\ess} (Q_{\Sigma_+}) \cup \sigma_{\ess} (Q_{\Sigma_-})$.
Then the function $\MpmMm $ is not analytic in $\lambda$, but the limit in statement (ii) exists and
\begin{equation} \label{e MpmMm=G-G}
(k-1)! \ \lim_{\substack{\ep \to 0 \\ \ep \in \R}} \MpmMm^{(k-1)} (\lambda+i\ep) =
\Gamma_1^+ \frac { \chi_{\R \setminus \{\lambda\} } (t)} {(t-\lambda )^{k}} - \Gamma_1^- \frac { \chi_{\R \setminus \{\lambda\} } (t)} {(t-\lambda )^{k}} \ .
\end{equation}
Indeed, taking $d\Sigma_+ (\{ \lambda \}) = d\Sigma_- (\{ \lambda \})$ into account, we get
$\MpmMm (z) = C_+ - C_- + \I_{\infty} (z) + \I_{\lambda} (z) $, where
\begin{eqnarray*} 
& \I_{\infty} (z) := \int\limits_{\R \setminus [\lambda -\delta, \lambda +\delta ] } \left( \frac 1{t-\lambda } - \frac t{t^2+1}  \right) \left( d\Sigma_+ (t) - d\Sigma_- (t) \right) \ , \\
& \I_{\lambda} (z) := \int\limits_{[\lambda -\delta , \lambda) \cup (\lambda, \lambda + \delta ] }
 \left( \frac 1{t-\lambda } - \frac t{t^2+1}  \right) \left( d\Sigma_+ (t) - d\Sigma_- (t) \right) \ ,
\end{eqnarray*}
and $\delta $ is any fixed positive number.
The function $\I_{\infty} (z) $ is analytic at $\lambda$.
Formula (\ref{e -j<inf pm}) is valid for $j \leq k$ and allows us to apply Lebesgue's dominated convergence theorem to the limit
$\lim\limits_{\substack{\ep \to 0 \\ \ep \in \R}} \ \I_{\lambda}^{(j-1)} \, (\lambda + i \ep) $.
As a result, we see that  (\ref{e G1pm 1}) implies (\ref{e MpmMm=G-G}) for $k=1$ and (\ref{e G1pm k}) implies (\ref{e MpmMm=G-G})
for $k \geq 2$.
\end{proof}

\begin{thm} \label{t s p GC}
Let $\widehat A = \widehat A \{ \Sigma_+ , C_+ , \Sigma_- , C_- \}$, where the functions $d \Sigma_\pm $
satisfy (\ref{e S assump}) and $C_\pm$ are certain real constants.
Then the following statements hold:
\begin{description}
\item[1)]
If $\lambda \in \mathfrak{A}_0 (\Sigma_+) \cup \mathfrak{A}_0 (\Sigma_-)$,
then $ \lambda \not \in \sigma_p (\widehat A)$.
\item[2)]
If $\lambda \in \mathfrak{A}_p (\Sigma_+) \cap \mathfrak{A}_p (\Sigma_-)$,
then
\begin{description}
\item[(i)] $\lambda$ is an eigenvalue of $\widehat A$; the geometric multiplicity of $\lambda$ equals 1;
\item[(ii)] the eigenvalue $\lambda$ is simple if an only if at least one of conditions (\ref{e S-S=S-S}), (\ref{e Int+<Inf}),
(\ref{e Int-<Inf}) is not fulfilled;
\item[(iii)] if conditions \eqref{e S-S=S-S}, \eqref{e Int+<Inf} and
\eqref{e Int-<Inf} hold true, then the algebraic multiplicity of $\lambda$ equals
the greatest number $k$
($k \in \{2,3,4, \dots \} \cup \{ +\infty \} $) such that conditions (\ref{e frj<inf}) and
\begin{gather}  \label{e Gamma1j=Gamma1j}
 \lim\limits_{\substack{\ep \to 0 \\ \ep \in \R}} \MpmMm^{(j-2)}  (\lambda+i\ep) = 0 \quad
 \text{(the function} \  \MpmMm  \ \text{is defined in Lemma \ref{l eq cond})},
 \end{gather}
are fulfilled for all $ j \in \N$ such that $ 2 \leq j \leq k-1 $
(in particular, $k=2$ if at least one of conditions \eqref{e frj<inf}, \eqref{e Gamma1j=Gamma1j} is not fulfilled for $j=2$).
\end{description}

\item[3)]
Assume that
$\lambda \in \mathfrak{A}_r (\Sigma_+) \cap \mathfrak{A}_r (\Sigma_-)$.
Then $\lambda \in \sigma_p (\widehat A)$ if and only if
\begin{gather} \label{e fr1=fr1 Gc}
\lim\limits_{\substack{\ep \to 0 \\ \ep \in \R}} \MpmMm (\lambda+i\ep) = 0.
\end{gather}
If \eqref{e fr1=fr1 Gc} holds true, then the geometric multiplicity of $\lambda$
is 1, and the algebraic multiplicity is the greatest number $k$
($1 \leq k \leq \infty$) such that
the conditions (\ref{e frj<inf r}) and
\begin{gather}
\lim\limits_{\substack{\ep \to 0 \\ \ep \in \R}} \MpmMm^{(j-1)} (\lambda+i\ep) = 0 \label{e frj=frj r Gc}
\end{gather}
are fulfilled for all $ j \in \N$ such that $ 1 \leq j \leq k $.
\item[4.]
If $\lambda \in \mathfrak{A}_p (\Sigma_+) \cap \mathfrak{A}_r (\Sigma_-)$
or $\lambda \in \mathfrak{A}_p (\Sigma_-) \cap \mathfrak{A}_r (\Sigma_+)$,
then $\lambda \not \in \sigma_p (\widehat A)$.
\end{description}
\end{thm}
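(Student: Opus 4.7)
The structure of the proof is parallel to the proof of Theorem \ref{t s p}. The descriptions of $\ker(T_{\Sigma_\pm}^* - \lambda)$ given in \eqref{e fA0}--\eqref{e Apker}, as well as the recursive construction of generalized eigenvectors $y_1, y_2, \dots$ starting from an eigenvector $y_0$, depend only on the action of $\Gamma_0^\pm$ and on whether the functions involved lie in $L^2(\R, d\Sigma_\pm)$; none of this involves the constants $C_\pm$ or the extra integrability assumption \eqref{e CpmCond}. Hence the forms \eqref{e y0 c2}, \eqref{e y1p}, \eqref{e y2p} of generalized eigenvectors, together with the integrability requirements \eqref{e Int+<Inf}, \eqref{e Int-<Inf}, \eqref{e frj<inf}, \eqref{e frj<inf r}, carry over verbatim. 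Parts 1) and 4) use only $\Gamma_0^\pm$ and the structure of $\ker(T_{\Sigma_\pm}^* - \lambda)$, so they carry over unchanged.

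The only part of the proof that must be modified is the treatment of the boundary condition $\Gamma_1^+ h_+ = \Gamma_1^- h_-$, because under the general hypotheses the simple formula \eqref{e whG1} is no longer available and one must work with \eqref{e whG1gen}. I would proceed by induction on $j$ as in the proof of Theorem \ref{t s p}: having already imposed the boundary matchings for the lower-order components, the new condition required for the existence of $y_j$ reduces, after cancellation of the contributions from the $\Gamma_0^\pm$-terms and from the lower-order poles $(t-\lambda)^{-i}$ with $i<j$, to the single equation
\[
\Gamma_1^+ \frac{\chi_{\R \setminus \{\lambda\}}(t)}{(t-\lambda)^j} = \Gamma_1^- \frac{\chi_{\R \setminus \{\lambda\}}(t)}{(t-\lambda)^j}.
\]
Applying Lemma \ref{l eq cond} with $k=j$, this equation is equivalent to $\lim_{\ep \to 0}\MpmMm^{(j-1)}(\lambda + i\ep) = 0$, and the same lemma simultaneously certifies that $\frac{\chi_{\R \setminus \{\lambda\}}(t)}{(t-\lambda)^j}$ lies in $\dom(T_{\Sigma_\pm}^*)$ provided the required $L^2$-integrability holds and, in case 2), the point-mass equality $d\Sigma_+(\{\lambda\}) = d\Sigma_-(\{\lambda\})$ is in force (hypothesis (b) of the lemma).

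Combining the two observations, the proofs of parts 2) and 3) are obtained by following the inductive scheme of Theorem \ref{t s p} and replacing, at each step, the pointwise integral equality \eqref{e frj=frj} (respectively \eqref{e frj=frj r}) by its equivalent limit form \eqref{e Gamma1j=Gamma1j} (respectively \eqref{e frj=frj r Gc}) supplied by the lemma. The greatest $k$ for which the induction can be carried to completion is then the algebraic multiplicity of $\lambda$, exactly as stated. For part 3), the base case $j=1$ uses Lemma \ref{l eq cond} under hypothesis (a) if $\lambda \in \C \setminus \R$ and under hypothesis (b) if $\lambda \in \R$, where the point-mass condition is automatic since $\lambda \in \mathfrak{A}_r(\Sigma_+) \cap \mathfrak{A}_r(\Sigma_-)$ forces $d\Sigma_\pm(\{\lambda\})=0$.

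The main technical obstacle is the bookkeeping in the induction. At the $j$-th step, the constants $c_i^\pm$ for $i \leq j$ fixed in the previous steps all contribute to $\Gamma_1^\pm y_j$ through \eqref{e whG1gen}, and one must verify that once the lower-level boundary conditions are imposed those contributions cancel between the plus and minus sides, leaving only the condition on $\MpmMm^{(j-1)}(\lambda)$. This uses the relations $\Gamma_0^\pm y_j = -c_j^\pm$ (cf.\ \eqref{e t=la2 p}) propagated by the recursion together, in case 2), with the equal-point-mass hypothesis \eqref{e S-S=S-S}; it is precisely \eqref{e S-S=S-S} that makes the individual point-mass contributions $c_i^\pm d\Sigma_\pm(\{\lambda\})$ to $\Gamma_1^\pm y_j$ match on both sides, thereby isolating the off-point part which is encoded by the derivative of the Weyl-function difference $\MpmMm$.
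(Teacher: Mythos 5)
Your proposal is correct and follows essentially the same route as the paper: repeat the inductive scheme of Theorem \ref{t s p} with \eqref{e whG1gen} in place of \eqref{e whG1}, keep parts 1) and 4) unchanged, and invoke Lemma \ref{l eq cond} to convert the $\Gamma_1$-matching of $\chi_{\R\setminus\{\lambda\}}(t)\,(t-\lambda)^{-j}$ into the limit conditions \eqref{e Gamma1j=Gamma1j} and \eqref{e frj=frj r Gc}. The only caveat is the index shift in case 2), where existence of the $j$-th generalized eigenvector hinges on matching the power $(t-\lambda)^{-(j-1)}$ (i.e.\ on $\MpmMm^{(j-2)}$), which your concluding paragraph, in agreement with the paper, in fact records correctly.
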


\begin{proof}
The proof is similar to that of Theorem \ref{t s p}, but some technical complications appear.
Namely, (\ref{e whG1}) is not valid whenever any of conditions in \eqref{e CpmCond} is not satisfied.

  We have to use (\ref{e whG1gen}), which is valid in the general case. Note that (\ref{e Gev12}) holds true.
   In the case $\la \in \mathfrak{A}_r (\Sigma_\pm)$, (\ref{e whG0}) holds also. When $\la \in \sigma_p (Q_{\Sigma_\pm})$, Eq.~(\ref{e whG0}) should be changed to \eqref{e chi1}.

The proof of statements \textbf{1)} and \textbf{4)} remains the same.

\textbf{2)} \ Let
$\lambda \in \mathfrak{A}_p (\Sigma_+) \cap \mathfrak{A}_p (\Sigma_-)$.
As before, we see that $\lambda$ is an eigenvalue of $\wh A$ with
 geometric multiplicity 1 and one of corresponding eigenvectors has the form (\ref{e y0 c2}).

Let
$ y_1 = \left( \begin{array}{c}
 y_1^- \\ y_1^+
\end{array} \right)$
and
$\wh A y_1 - \la  y_1 = y_0 $.
In the same way, we get \eqref{e G0=c=S}, \eqref{e y1p} as well as the fact that the conditions $y_1^+ \in L^2 (\R, d\Sigma_+ )$ and
$y_1^- \in L^2 (\R, d\Sigma_- )$ are
equivalent to (\ref{e Int+<Inf}) and (\ref{e Int-<Inf}),
respectively.
If \eqref{e Int+<Inf} and \eqref{e Int-<Inf}
are fulfilled, we obtain
\begin{gather*} 
\Gamma_1^\pm y_1^\pm = - \frac 1{ d\Sigma_\pm (\{\la\}) } \
\Gamma_1^\pm \frac {\chi_{ \R \setminus \{\la \}} (t)}{ t-\la }+
c_2^\pm  d\Sigma_\pm (\{\la\}).
\end{gather*}
The latter and \eqref{e G0=c=S} implies that $y_1 \in \dom (\wh A)$
if and only if conditions
(\ref{e S-S=S-S}) and
\begin{gather} \textstyle
- \frac 1{ d\Sigma_- (\{\la\}) }
\Gamma_1^- \frac {\chi_{ \R \setminus \{\la \}} (t)}{ t-\la } +
c_2^-  \ d\Sigma_- (\{\la \})
= - \frac 1{ d\Sigma_+ (\{\la \}) }
\Gamma_1^+ \frac {\chi_{ \R \setminus \{\la \}} (t)}{ t-\la } +
c_2^+  \ d\Sigma_+ (\{\la \}) \label{e hBC1 p gen}
\end{gather}
are fulfilled.
Thus, generalized eigenvectors of first order exist
if and only if conditions \eqref{e S-S=S-S}, \eqref{e Int+<Inf},
and \eqref{e Int-<Inf} are satisfied.
In this case, $ y_1$ has the form (\ref{e y1p}) with constants
$c_2^\pm $ such that (\ref{e hBC1 p gen}) holds. In particular, the constants
\begin{equation} \label{e c2 gen}
c_2^\pm = -\alpha_1^2
\Gamma_1^\mp \frac {\chi_{ \R \setminus \{\la \}} (t)}{ t-\la },
\end{equation}
give a generalized eigenvector (as before, $\alpha_1= \frac 1{d\Sigma_- (\{\la \})} =
\frac 1{d\Sigma_+ (\{ \la \}) } $).

Let $ y_2  = \left( \begin{array}{c}
y_2^- \\ y_2^+
\end{array} \right)$ and
$\wh A y_2 -\la y_2 = y_1$. Then (\ref{t-la y2}), (\ref{e t=la2 p}), and (\ref{e y2p})
have to be fulfilled with $c_2^\pm$ given by (\ref{e c2 gen}).
So $y_2^\pm$ belong to $
L^2 (\R, d\Sigma_\pm )$ if and only if (\ref{e frj<inf}) is satisfied for $j=2$. Conditions (\ref{e hBC})
are equivalent to $c_2^-=c_2^+$ and
\begin{gather*} 
-\alpha_1 \Gamma_1^- \frac { \chi_{ \R \setminus \{\la \}} } {(t-\la)^2}  -
c_2^- \Gamma_1^- \frac { \chi_{ \R \setminus \{\la \}} } {t-\la}  +
c_3^-  \alpha_1^{-1}
= \\ =
-\alpha_1 \Gamma_1^+ \frac { \chi_{ \R \setminus \{\la \}} } {(t-\la)^2}  -
c_2^+ \Gamma_1^+ \frac { \chi_{ \R \setminus \{\la \}} } {t-\la} +
c_3^+  \alpha_1^{-1} .
\end{gather*}
Thus $y_2$ exists if and only if, for $j=2$, conditions (\ref{e frj<inf}) and
\begin{gather}  \label{e Gamma1j=Gamma1j gc}
\Gamma_1^- \frac { \chi_{\R \setminus \{\lambda\} } } {(t-\lambda )^{j-1}}  =
\Gamma_1^+ \frac { \chi_{\R \setminus \{\lambda\} } } {(t-\lambda )^{j-1}}
\end{gather}
are fulfilled. By Lemma \ref{l eq cond}, (\ref{e Gamma1j=Gamma1j gc}) is equivalent to (\ref{e Gamma1j=Gamma1j}) with $j=2$.
Continuing this line of reasoning, we obtain parts \textbf{2)} and \textbf{3)} of the theorem.
%
%
\end{proof}

\begin{rem} \label{r Sterms}
\textbf{(1)}
In the last theorem, the conditions that determine the algebraic multiplicities are given in the terms of the function
$\Phi = M_{\Sigma_+, C_+} - M_{\Sigma_-, C_-} $, so in the terms of abstract Weyl functions $ M_{\Sigma_\pm, C_\pm} $.
Using Lemma \ref{l eq cond} and (\ref{e G1pm 1}), (\ref{e G1pm k}), Theorem \ref{t s p GC} can be easily rewritten in terms of the spectral measures $\Sigma_\pm$,
but this makes the answer longer due to the different forms of $\Gamma_1^\pm \ \frac { \chi_{\R \setminus \{\lambda\} } (t)} {(t-\lambda )^{j}}$ for the cases $j=1$ and $j \geq 2$, see \eqref{e G1pm 1} and \eqref{e G1pm k}. In the case when assumptions \eqref{e CpmCond} are fulfilled, \eqref{e G1pm 1} can be written in the form of  \eqref{e G1pm k} and we get Theorem \ref{t s p}.

\textbf{(2)} Note that eigenvalues of $\wh A$ that belong to $\rho (Q_{\Sigma_+} \oplus Q_{\Sigma_+})$ can be found
in the terms of $M_{\Sigma_\pm, C_\pm}$ using  \cite{DM91} (and, perhaps, \cite{D99}), see the next section.
Algebraic multiplicities of eigenvalues in $\rho (Q_{\Sigma_+} \oplus Q_{\Sigma_+})$ can be found using Krein's resolvent formula (see \cite{DM91,DM95} for a convenient abstract form), root subspaces for eigenvalues in $\rho (Q_{\Sigma_+} \oplus Q_{\Sigma_+})$ were found in \cite{Dth}. Theorem \ref{t s p GC} has some common points with \cite{BMN02},
where the abstract Weyl function was used to find eigenvalues of a self-adjoint operator. But the approach of the present paper goes in the backward direction:
we use the spectral measures $d\Sigma_\pm$ and the functional model to find eigenvalues and root subspaces and then, using Lemma \ref{l eq cond}, return to the answer in the terms of the abstract Weyl functions given in Theorem \ref{t s p GC}.

\textbf{(3)} Various generalizations of (R)-functions
and their functional models were considered in \cite{Fl00,J00}. These results were applied to certain classes of regular Sturm-Liouviile problems in \cite{FHdSW05,BT06,FHdSW09}.
\end{rem}

\subsection{Essential and discrete spectra of the model operator and of indefinite Sturm-Liouville operators}


Besides the symmetry condition $\sigma (T) = \sigma (T)^*$ the spectrum of a 
J-self-adjoint operator can be fairly arbitrary (see \cite{Lan82}).
An example of a differential operator with a "wild" spectrum was given in \cite{AM87,AM89}.

\begin{ex} \label{e AM}
Consider the operator $A$ in $L^2 [-1,1] $ associated with the differential expression 
\[
(\sgn x)\left( (\sgn x) y' \right)^\prime
\]
and boundary conditions $y(-1)=0=y(1)$. More precisely,
$A y = - y^{\prime\prime}$,
\begin{multline*}
\dom (A) = \{ y \in W_2^2 (-1,0) \oplus
W_2^2 (0,1) \ : \\ y(-0) = y(+0) , \ y^{\prime} (-0) = - y^{\prime} (+0) \quad
\text{and} \quad y(-1)=0=y(1) \} .
\end{multline*}
The operator $A$ is $J$-self-adjoint with $J$ given by $Jf(x) = (\sgn x)f(x)$.
It was observed in \cite{AM87,AM89} that every $\lambda
\in \C$ is an  eigenvalue of $A$ and, moreover, every $\lambda
\in \R$ is a nonsimple eigenvalue. Theorem \ref{t s p} shows that
every $\lambda \in \C$ is an eigenvalue  of infinite algebraic multiplicity (the geometric multiplicity of $\lambda $ equals 1).
Indeed, introducing as in Theorem \ref{l G+=G-} the operator $A_{\min} := A \cup A^*$,
we see that
\[
\dom (A) = \{ y \in A_{\min}^* \ : \ \Gamma_0^+ y_+ =\Gamma_0^- y_-, \ \Gamma_1^+ y_+ =\Gamma_1^- y_- \} ,
\]
where $y_+ (y_-)$ is the orthoprojection of $y$ on $L^2 [0,1]$ (resp., $L^2 [-1,0]$), 
\[
\Gamma_0^+ y_+ := -y'(+0), \quad \Gamma_0^- y_- := y'(-0), \quad \text{and} \ 
\Gamma_1^\pm y_\pm := y (\pm 0).
\]
 On the other hand,
$\{ \C, \Gamma_0^- , \Gamma_1^+ \}$ is a boundary triple for $A_{\min}^+ = A_{\min} \uph L^2 [0,1]$ and $\{ \C, \Gamma_0^- , \Gamma_1^- \}$
is a boundary triple for $A_{\min}^- = A_{\min} \uph L^2 [-1,0]$.
It is easy to see that the differential expression $-\frac {d^2}{dx^2}$ is associated with both the symmetric operators $A_{\min}^\pm$.
These operators and their boundary triples are unitarily equivalent. This means that the corresponding Weyl functions $M_\pm$ coincide. Now Theorem
\ref{t s p GC} implies that any $ \lambda \in \C \setminus \R$ is an eigenvalue of infinite algebraic multiplicity and therefore $\sigma (A) = \C$.
(Actually in this case conditions (\ref{e CpmCond}) hold, so Theorem \ref{t s p} can also be applied.) Finally, note that the functions $M_\pm$ are meromorphic and
therefore Theorem \ref{t s p GC} (2)-(3) and Lemma \ref{l eq cond} (ii)$\Leftrightarrow$(iii) imply that each point $\lambda \in \C$ is an eigenvalue of infinite algebraic multiplicity.
\end{ex}

\begin{rem} \label{r M}
In \cite{Ming04}, a characterization  of the case $\sigma (A) = \C$ was given in terms of coefficients for  regular operators $A=\frac {1}{r(x)} \frac {d}{dx} p(x)\frac {d}{dx} $ with Dirichlet boundary conditions.
Both coefficients $r$ and $p$ were allowed to change sign, modifications of arguments for general regular problems were suggested also.
\end{rem}

Arguments of Example \ref{e AM} show that the case $\sigma (\widehat A) = \C$ is exceptional in the sense of the next proposition.

\begin{prop} \label{p s=C}
The following statements are equivalent:
\begin{description}
\item[(i)] $\ M_{\Sigma_+,C_+} (\lambda ) =  M_{ \Sigma_-, C_-} (\lambda )$ for all $\ \lambda \in \C \setminus (\supp d\Sigma_+ \cup \supp d\Sigma_-) $;
\item[(ii)] the measures $d\Sigma_+$ and $d\Sigma_-$ coincide, and $C_+ = C_-$;
\item[(iii)] $\sigma (\widehat A) = \C$.
\end{description}
Moreover, if statements (i)-(iii) hold true, then every point in the set \\
$
\C \setminus \left( \sigma_{ess} (Q_{\Sigma_+}) \cup
\sigma_{ess} (Q_{\Sigma_-}) \right) $
is an eigenvalue of $\widehat A$ of infinite algebraic multiplicity.
\end{prop}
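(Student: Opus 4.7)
The plan is to prove (i) $\Leftrightarrow$ (ii), then (ii) $\Rightarrow$ (iii) together with the ``moreover'' assertion, and finally (iii) $\Rightarrow$ (i). For (i) $\Leftrightarrow$ (ii) I would invoke the uniqueness of the (R)-function representation \eqref{e Msc}: a direct calculation gives $\frac{1}{t-i} - \frac{t}{1+t^2} = \frac{i}{1+t^2}$, so $\re M_{\Sigma,C}(i) = C$, while Stieltjes inversion applied to $\im M_{\Sigma,C}$ recovers $d\Sigma$ under the normalization \eqref{e S assump}; the reverse direction is immediate from \eqref{e Msc}.

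For (ii) $\Rightarrow$ (iii) with the ``moreover'' part, fix $\lambda \in \C \setminus (\sigma_{\ess}(Q_{\Sigma_+}) \cup \sigma_{\ess}(Q_{\Sigma_-}))$. Under (ii) the operators $Q_{\Sigma_+}$ and $Q_{\Sigma_-}$ have identical spectra, so $\lambda$ belongs either to $\mathfrak{A}_p(\Sigma_+) \cap \mathfrak{A}_p(\Sigma_-)$ (when $\lambda$ is isolated in $\supp d\Sigma_\pm$) or to $\mathfrak{A}_r(\Sigma_+) \cap \mathfrak{A}_r(\Sigma_-)$ (otherwise; this covers all $\lambda \in \C \setminus \R$). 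I would then apply Theorem \ref{t s p GC}(2) in the first case and Theorem \ref{t s p GC}(3) in the second: all equality-type conditions \eqref{e Gamma1j=Gamma1j}, \eqref{e fr1=fr1 Gc}, \eqref{e frj=frj r Gc} are trivial because $\MpmMm \equiv 0$, and the integrability conditions \eqref{e frj<inf}, \eqref{e frj<inf r} hold for every $j$ since $|t-\lambda|^{-2j}$ is bounded on $\supp d\Sigma_\pm \setminus \{\lambda\}$ near $\lambda$ (isolation), while the tail integral is controlled by $\int (1+t^2)^{-1} d\Sigma_\pm < \infty$ from \eqref{e S assump}. Hence every such $\lambda$ is an eigenvalue of $\widehat A$ of infinite algebraic multiplicity. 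Since $\sigma(\widehat A)$ is closed and $\C \setminus \R$ is dense in $\C$, this forces $\sigma(\widehat A) = \C$.

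The substantive direction is (iii) $\Rightarrow$ (i). I would aim to prove the Krein-type equivalence
\begin{equation*}
\lambda \in \rho(\widehat A) \iff \MpmMm(\lambda) \neq 0, \qquad \lambda \in \rho(Q_{\Sigma_+} \oplus Q_{\Sigma_-}),
\end{equation*}
by working with the diagonal boundary triple $\{\C^2, (\Gamma_0^{\Sigma_+}, \Gamma_0^{\Sigma_-}), (\Gamma_1^{\Sigma_+, C_+}, \Gamma_1^{\Sigma_-, C_-})\}$ for $T_{\Sigma_+}^* \oplus T_{\Sigma_-}^*$, whose Weyl function is $\mathrm{diag}(M_{\Sigma_+, C_+}, M_{\Sigma_-, C_-})$. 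Decomposing $f \in \dom(T_{\Sigma_+}^* \oplus T_{\Sigma_-}^*)$ as $f_0 + f_\lambda$ with $f_0 \in \dom(Q_{\Sigma_+} \oplus Q_{\Sigma_-})$ and $f_\lambda$ in the defect subspace, and using $\Gamma_1 f_\lambda = M(\lambda) \Gamma_0 f_\lambda$ together with the boundary conditions defining $\widehat A$ in \eqref{e A hat}, the equation $(\widehat A - \lambda) f = g$ reduces to $(Q_{\Sigma_+} \oplus Q_{\Sigma_-} - \lambda) f_0 = g$ plus a single scalar equation in one free parameter $c$ with coefficient $\MpmMm(\lambda)$. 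Granting the equivalence, (iii) forces $\MpmMm \equiv 0$ on $\C \setminus \R$; since $M_{\Sigma_\pm, C_\pm}$ is analytic on $\C \setminus \supp d\Sigma_\pm$ and the set $\C \setminus (\supp d\Sigma_+ \cup \supp d\Sigma_-)$ is connected, analytic continuation delivers (i).

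The hard part will be executing this Krein-type reduction precisely, tracking how the boundary conditions defining $\widehat A$ translate the non-invertibility of $\widehat A - \lambda$ into the scalar condition $\MpmMm(\lambda) = 0$. This is an instance of the general principle from \cite{DM91,DM95} that on the resolvent set of a chosen self-adjoint reference extension the spectrum of a proper extension is governed by the associated Weyl function; combined with Theorem \ref{t s p GC}(3) it yields the structural statement that inside $\rho(Q_{\Sigma_+} \oplus Q_{\Sigma_-})$ one has $\sigma(\widehat A) = \sigma_p(\widehat A)$, which is the reason behind the pathological behaviour exhibited in Example \ref{e AM}.
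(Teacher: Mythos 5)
Your proposal is correct and follows essentially the route the paper intends: the paper leaves the proof to ``arguments of Example \ref{e AM}'' (i.e.\ Theorem \ref{t s p GC}(2)--(3) with $\MpmMm\equiv 0$ for (ii)$\Rightarrow$(iii) and the ``moreover'' part) together with the Krein-type identity \eqref{e s in rho} for (iii)$\Rightarrow$(i), and you have filled in exactly these steps, plus the standard Stieltjes-inversion argument for (i)$\Leftrightarrow$(ii). The only quibble is your appeal to connectedness of $\C\setminus(\supp d\Sigma_+\cup\supp d\Sigma_-)$, which fails when the supports cover $\R$; the conclusion survives because $M_{\Sigma_\pm,C_\pm}(\overline\lambda)=\overline{M_{\Sigma_\pm,C_\pm}(\lambda)}$ forces $\MpmMm$ to vanish identically on $\C_-$ as soon as it does on $\C_+$.
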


If $\ M_{\Sigma_+,C_+} (\cdot ) \not \equiv  M_{ \Sigma_-, C_-} (\cdot )$, then the nonreal spectrum is the set of zeros of analytic function  $\Phi$ defined in Lemma \ref{l eq cond}. More precisely, Theorem \ref{t s p GC} shows
 that
\begin{multline}
\sigma (\widehat A) \cap \rho ( Q_{\Sigma_+} \oplus Q_{\Sigma_-})
= \\ =
\{ \lambda \in \rho ( Q_{\Sigma_+}) \cap \rho ( Q_{\Sigma_-}) \ : \
M_{ \Sigma_+, C_+} (\lambda) = M_{\Sigma_-, C_-} (\lambda) \}
\subset \sigma_p (\widehat A) \label{e s in rho}
\end{multline}
(this statement also can be obtained from \cite[Proposition 2.1]{DM91}).
It is easy to see that \eqref{e s in rho} and Theorem \ref{t s p GC}
yield the following description of the discrete and essential spectra
(cf. \cite[p.~106, Theorem~1]{AhGl2}).

\begin{prop} \label{p e and d}
Assume that $M_{\Sigma_+,C_+} (\lambda_0) \neq M_{ \Sigma_-, C_-} (\lambda_0)$ for certain
$\lambda_0$ in the set $\rho ( Q_{\Sigma_+}) \cap \rho ( Q_{\Sigma_-}) $.
Then:
\begin{description}
\item[(i)] $ \sigma_{\ess} (\wh A) = \sigma_{\ess}
(Q_{\Sigma_{+}})
\cup \sigma_{\ess} (Q_{\Sigma_{-}}) \subset \R;$
\item[(ii)] 
\begin{multline*}
 \qquad \sigma_{\disc} (\wh A) = \left( \sigma_{\disc}
(Q_{\Sigma_{+}}) \cap \sigma_{\disc} (Q_{\Sigma_{-}}) \right) \
\cup \\
  \cup \
\{ \lambda \in
\rho (Q_{\Sigma_+}) \cap \rho (Q_{\Sigma_-})
\ : \ M_{\Sigma_+,C_+} (\lambda ) = M_{ \Sigma_-, C_-} (\lambda ) \} ;
\end{multline*}
\item[(iii)] the geometric multiplicity equals 1 for all
eigenvalues of $\wh A$;
\item[(iv)] if $\lambda_0 \in  \left( \sigma_{\disc}
(Q_{\Sigma_{+}}) \cap \sigma_{\disc} (Q_{\Sigma_{-}}) \right)$,
then the algebraic multiplicity of $\lambda_0$
is equal to the multiplicity of $\lambda_0$ as a zero
of the holomorphic function \\
$ \frac 1{M_{\Sigma_+,C_+} (\lambda)} - \frac 1{M_{\Sigma_-,C_-} (\lambda)}$;
\item[(v)] if $\lambda_0 \in \rho (Q_{\Sigma_{+}}) \cap
\rho (Q_{\Sigma_{-}})$, then the algebraic multiplicity
of $\lambda_0$ is equal to the multiplicity of
$\lambda_0$ as zero of the holomorphic function
$M_{\Sigma_+,C_+} (\lambda) - M_{\Sigma_-,C_-} (\lambda)$.
\end{description}
\end{prop}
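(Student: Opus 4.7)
The plan is to deduce Proposition~\ref{p e and d} from the description (\ref{e s in rho}) together with Theorem~\ref{t s p GC}, using elementary properties of the $R$-functions $M_{\Sigma_\pm, C_\pm}$ and a Krein-type resolvent formula. Set $\Phi := M_{\Sigma_+, C_+} - M_{\Sigma_-, C_-}$. The hypothesis together with the $R$-function symmetry $\Phi(\overline\lambda) = \overline{\Phi(\lambda)}$ forces $\Phi \not\equiv 0$ on every connected component of the open set $\rho(Q_{\Sigma_+}) \cap \rho(Q_{\Sigma_-})$ on which $\Phi$ is holomorphic; hence its zero set there is discrete and every zero has finite order.

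First I would dispose of~(v) and~(iii) and of the ``resolvent'' clause of~(ii). Any $\lambda \in \rho(Q_{\Sigma_+}) \cap \rho(Q_{\Sigma_-})$ belongs to $\mathfrak{A}_r(\Sigma_+) \cap \mathfrak{A}_r(\Sigma_-)$; the first condition in (\ref{e S assump}) together with $\lambda \notin \supp d\Sigma_\pm$ makes all integrability hypotheses in Theorem~\ref{t s p GC}(3) automatic, and (\ref{e frj=frj r Gc}) reduces to $\Phi^{(j-1)}(\lambda) = 0$. Thus Theorem~\ref{t s p GC}(3) yields directly that the algebraic multiplicity of such a $\lambda$ equals its order as a zero of $\Phi$ (this is~(v)), that the geometric multiplicity is one, and that the point lies in $\sigma_{\disc}(\wh A)$. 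Combined with parts~(1) and~(4) of Theorem~\ref{t s p GC}, which contribute no eigenvalues, this gives $\sigma(\wh A) \setminus (\sigma_{\ess}(Q_{\Sigma_+}) \cup \sigma_{\ess}(Q_{\Sigma_-})) \subset \sigma_{\disc}(\wh A)$, modulo the common isolated eigenvalues treated in~(iv) below.

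For the reverse inclusion in~(i), I would apply Krein's resolvent formula (\cite{DM91,DM95}) at the point $\lambda_0$ of the hypothesis, which by (\ref{e s in rho}) belongs to $\rho(\wh A) \cap \rho(Q_{\Sigma_+} \oplus Q_{\Sigma_-})$. On the common resolvent set the formula expresses $(\wh A - \lambda)^{-1} - (Q_{\Sigma_+} \oplus Q_{\Sigma_-} - \lambda)^{-1}$ as an operator of rank at most two; the standard analytic Fredholm argument then yields $\sigma_{\ess}(\wh A) = \sigma_{\ess}(Q_{\Sigma_+} \oplus Q_{\Sigma_-}) = \sigma_{\ess}(Q_{\Sigma_+}) \cup \sigma_{\ess}(Q_{\Sigma_-}) \subset \R$, completing~(i) and, combined with the previous paragraph, also~(ii); claim~(iii) is already contained in Theorem~\ref{t s p GC}.

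The hard part will be the algebraic-multiplicity formula in~(iv). At $\lambda_0 \in \sigma_{\disc}(Q_{\Sigma_+}) \cap \sigma_{\disc}(Q_{\Sigma_-})$ the measures $d\Sigma_\pm$ have isolated atoms $\alpha_\pm := d\Sigma_\pm(\{\lambda_0\}) > 0$, so conditions (\ref{e Int+<Inf})--(\ref{e frj<inf}) are automatic, and each $M_{\Sigma_\pm, C_\pm}$ is meromorphic near $\lambda_0$ with a simple pole of residue $-\alpha_\pm$; write $M_{\Sigma_\pm, C_\pm}(\lambda) = -\alpha_\pm/(\lambda - \lambda_0) + g_\pm(\lambda)$ with $g_\pm$ holomorphic. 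Expanding
\[
\frac{1}{M_{\Sigma_\pm, C_\pm}(\lambda)} = -\frac{\lambda - \lambda_0}{\alpha_\pm}\sum_{n \ge 0}\left(\frac{(\lambda - \lambda_0)\, g_\pm(\lambda)}{\alpha_\pm}\right)^{\!n}
\]
and subtracting, the coefficient of $(\lambda - \lambda_0)^1$ vanishes iff $\alpha_+ = \alpha_-$, which is~(\ref{e S-S=S-S}); in that case $\Phi$'s pole at $\lambda_0$ is removable, and inductively the coefficient of $(\lambda - \lambda_0)^j$ for $2 \le j \le k-1$ vanishes iff $\Phi^{(j-2)}(\lambda_0) = 0$, which is~(\ref{e Gamma1j=Gamma1j}). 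Theorem~\ref{t s p GC}(2) then equates the algebraic multiplicity of $\lambda_0$ as an eigenvalue of $\wh A$ with its order as a zero of $1/M_{\Sigma_+, C_+} - 1/M_{\Sigma_-, C_-}$. The main obstacle is exactly this ``off-by-one'' bookkeeping: the shared-jump condition~(\ref{e S-S=S-S}) is precisely what factors an extra $(\lambda - \lambda_0)$ out of $1/M_+ - 1/M_-$, aligning its order of vanishing with the $j$-indexing of the conditions in Theorem~\ref{t s p GC}(2)(iii).
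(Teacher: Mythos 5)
Your proposal is correct and follows the route the paper itself indicates: the paper gives no detailed argument, stating only that the proposition follows from \eqref{e s in rho} and Theorem \ref{t s p GC} (with a nod to Akhiezer--Glazman for the essential-spectrum stability), and your write-up is exactly that deduction carried out, with the Krein resolvent formula supplying the rank-$\le 2$ resolvent difference that justifies $\sigma_{\ess}(\wh A)=\sigma_{\ess}(Q_{\Sigma_+}\oplus Q_{\Sigma_-})$ --- a tool the paper itself points to in Remark \ref{r Sterms}~(2). Your bookkeeping in (iv) is also right: writing $\frac 1{M_+}-\frac 1{M_-}=-\Phi/(M_+M_-)$ shows the order of the zero is $1$ when the atoms differ and $2+\mathrm{ord}_{\lambda_0}\Phi$ when they coincide, which matches the indexing of conditions \eqref{e S-S=S-S} and \eqref{e Gamma1j=Gamma1j} in Theorem \ref{t s p GC}~(2)(iii).
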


\begin{rem} \label{r defin}
The operator $\wh A$ is definitizable if and only if the sets $\supp d\Sigma_+ $ and $\supp d\Sigma_-$ are separated by a finite number of points
(in the sense of \cite[Definition 3.4]{KarTr07}). This criterion was obtained for operators $A = (\sgn x)(-d^2/dx^2 +q(x))$  in \cite{KarDis,KarKr05}
(see also \cite[Section 2.3]{KarMMM07})
using the result of \cite{JL79} and the fact that $\rho (A) \neq \emptyset$; the detailed proof was published in \cite[Theorem 3.6]{KarTr07}.
The same proof is valid for the operator $\wh A$ if we note that $\rho (\wh A) \neq \emptyset$ whenever
$\supp d\Sigma_+ $ and $\supp d\Sigma_-$ are separated by a finite number of points. Indeed, in this case $\supp d\Sigma_+ \neq \supp d\Sigma_-$  since
$\supp d\Sigma_\pm $ are unbounded due to the second assumption in \eqref{e S assump}.
\end{rem}

\subsection{Non-emptiness of resolvent set for Sturm-Liouville operators \label{ss nonempty}}

To apply Proposition \ref{p e and d} to the J-self-adjoint Sturm-Liouville operator
\[
A = \frac {\sgn x}{|r(x)|} \left( -\frac {d}{dx}p(x) \frac {d}{dx} +q(x) \right)
\]
introduced in Section \ref{ss SLModel}, one has to insure that $\rho (A) \neq \emptyset$.
Here we discuss briefly results of this type. We assume that $\wh A \{ \Sigma_+,C_+, \Sigma_-,C_-\}$ is
one of model operators unitarily equivalent to the operator $A$ and that $M_\pm (\cdot) = M_{\Sigma_\pm,C_\pm} (\cdot)$ are the associated Weyl functions.

Sometimes it is known that the asymptotic formulae of $M_+ $ and $M_- $ at $\infty$ are different.
 This argument was used in \cite[Proposition 2.5 (iv)]{KarMMM07} to show that $\rho (A) \neq \emptyset$ for the operator $(\sgn x) (-d^2/dx^2 +q(x))$. Indeed,
\eqref{e asM} shows that $M_{\Sl+} (\cdot) \not \equiv M_{\Sl-} (\cdot)$. One can extend this result using \cite[Theorem 4]{At81} in the following way:
\emph{if $p \equiv 1$ and there exist constants $r_\pm >0$ such that
\[
\int_0^x ( \pm r(t) - r_\pm) dt = o(x) \qquad \text{as} \quad x \to \pm 0 ,
\]
then $\rho (A) \neq \emptyset$.}

If $p \not \equiv 1$, one may use the standard change of variable $s = \int_0^x \frac{d\tau}{p(\tau)}$ to get back to the form with $p \equiv 1$:


\begin{prop} \label{p nemp as gen}
Assume that there exist positive constants $r_\pm$ such that
\begin{gather}
\int_0^x r(t) dt = \left(  r_+  \, + \, o(1)  \right) \ \int_0^x \frac{dt}{p(t)} \  \qquad \text{as} \quad x \to +0 , \\
\int_x^0 |r(t)| dt = \left( r_- \, + \, o (1) \right) \ \int_x^0 \frac{dt}{p(t)}   \qquad \text{as} \quad x \to -0 .
\end{gather}
Then $\rho (A) \neq \emptyset$. 
\end{prop}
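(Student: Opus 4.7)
The plan is to reduce to the case $p \equiv 1$, already handled by the Atkinson-type result stated just before the proposition, via the classical Liouville change of variable. Set
\[
s = s(x) := \int_0^x \frac{d\tau}{p(\tau)}, \qquad x \in (a,b).
\]
Since $1/p \in L^1_{\loc}(a,b)$ and $1/p > 0$ a.e., this is a strictly increasing absolutely continuous bijection from $(a,b)$ onto some interval $(\tilde a, \tilde b)$ with $\tilde a < 0 < \tilde b$. First I would verify that $(Uy)(s) := y(x(s))$ is a unitary operator from $L^2((a,b), |r(x)|\, dx)$ onto $L^2((\tilde a, \tilde b), |\tilde r(s)|\, ds)$, where
\[
\tilde p \equiv 1, \qquad \tilde q(s) := p(x(s))\, q(x(s)), \qquad \tilde r(s) := p(x(s))\, r(x(s)),
\]
and that $U$ intertwines $A$ with the $J$-self-adjoint operator $\tilde A$ on $(\tilde a, \tilde b)$ associated with the coefficients $\tilde p, \tilde q, \tilde r$ under the naturally transported boundary conditions. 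Note $\sgn \tilde r(s) = \sgn s$ since $p > 0$, so $\tilde A$ is of the same class as $A$; as resolvent sets are unitary invariants, it suffices to prove $\rho(\tilde A) \neq \emptyset$.

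The heart of the argument is the translation of the hypothesis. Performing the substitution $\sigma = s(t)$, $d\sigma = dt/p(t)$,
\[
\int_0^{s(x)} \tilde r(\sigma)\, d\sigma \;=\; \int_0^x p(t) r(t) \cdot \frac{dt}{p(t)} \;=\; \int_0^x r(t)\, dt ,
\]
while by the very definition of $s(x)$, $\int_0^x dt/p(t) = s(x)$. The first hypothesis therefore rewrites as
\[
\int_0^{s} \bigl( \tilde r(\sigma) - r_+ \bigr)\, d\sigma = o(s) \quad \text{as} \ s \to +0 ,
\]
and the second analogously becomes $\int_s^0 \bigl(|\tilde r(\sigma)| - r_-\bigr)\, d\sigma = o(s)$ as $s \to -0$. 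These are exactly the assumptions of the $p \equiv 1$ case quoted in the paragraph preceding the proposition (which in turn relies on the asymptotics of Atkinson \cite{At81}), so that result applied to $\tilde A$ yields $\rho(\tilde A) \neq \emptyset$, whence $\rho(A) \neq \emptyset$.

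The main obstacle, and the only genuinely technical part, is the bookkeeping for the Liouville transformation: unitarity of $U$, the identity $\tilde\aexp[\tilde y] = \tilde r^{-1}\bigl(-\tilde y''(s) + \tilde q(s)\tilde y\bigr)$ in the quasi-derivative sense of Krein (needed because $p, q, 1/p$ are only $L^1_{\loc}$), and the fact that the limit-point/limit-circle dichotomy at $a,b$ is preserved at $\tilde a, \tilde b$ so that any separated self-adjoint boundary conditions imposed on $A$ transfer unambiguously to $\tilde A$. Once this standard verification is carried out, the proposition follows immediately from the hypothesis translation above together with the quoted $p \equiv 1$ case.
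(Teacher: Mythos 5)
Your proposal is correct and follows exactly the route the paper intends: the paper itself introduces Proposition \ref{p nemp as gen} with the one-line remark that one should use the change of variable $s=\int_0^x d\tau/p(\tau)$ to reduce to the $p\equiv 1$ case handled via Atkinson's asymptotics, and your computation of $\tilde r = pr$, $\tilde q = pq$ and the translation of the hypotheses is the correct (and more detailed) execution of that reduction.
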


Another simple way to prove $\rho (A) \neq \emptyset$ uses information on the supports of spectral measures $d\Sigma_\pm$. In this way,
it was obtained in \cite[Proposition 3.1]{KarKos08} that $\rho(A) \neq \emptyset$ if $L= \frac 1{|r|} (-\frac d{dx} p \frac d{dx} +q)$ is semi-bounded from below
(the proof \cite[p. 811]{KarKos08} given for $p \equiv 1$ is valid in the general case). Moreover,
modifying slightly the same arguments, we get the next result.

\begin{prop} \label{p nemp sb}
Assume that at least one of the symmetric operators $A_{\min}^+$, $A_{\min}^-$ (defined in Section \ref{ss SLModel}) is semi-bounded.
Then $\rho(A) \neq \emptyset $.
\end{prop}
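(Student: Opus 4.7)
The plan is to reduce $\rho(A) \neq \emptyset$ to the assertion $\supp d\Sigma_+ \neq \supp d\Sigma_-$, and then produce this discrepancy from the semi-boundedness hypothesis. Section \ref{ss SLModel} yields the unitary equivalence $A \cong \widehat A\{\Sigma_+, C_+, \Sigma_-, C_-\}$, with $d\Sigma_\pm$ the spectral measures of the Neumann Sturm-Liouville operators $A_0^\pm$ on $(0,b)$ and $(a,0)$. By Proposition \ref{p s=C}, $\sigma(\widehat A) = \C$ is equivalent to $d\Sigma_+ = d\Sigma_-$ together with $C_+ = C_-$, so differing supports $\supp d\Sigma_\pm$ already give $\rho(A) = \rho(\widehat A) \neq \emptyset$.

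The key structural input, used without appeal to the semi-boundedness hypothesis, is that $\supp d\Sigma_+$ is always unbounded above while $\supp d\Sigma_-$ is always unbounded below. Indeed, on $(0,b)$ the operator $A_0^+$ coincides with $L^{(+)} = |r|^{-1}(-(py')' + qy)$ under the Neumann condition $y'(+0)=0$, whose closed quadratic form is $\int_0^b (p|y'|^2 + q|y|^2)\,dx$. Testing against $y_n(x) = \phi(x)e^{inx}$ with $\phi \in C_c^\infty(0,b)$ and $p$ bounded away from $0$ on $\supp \phi$ gives
\[
(L^{(+)} y_n, y_n)_{L^2(|r|)} = n^2 \int p|\phi|^2\,dx + O(1), \qquad \|y_n\|^2_{L^2(|r|)} = \int |\phi|^2 |r|\,dx,
\]
so the Rayleigh quotient tends to $+\infty$ and $A_0^+ = L^{(+)}$ is unbounded above. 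The analogous argument on $(a,0)$ shows $L^{(-)}$ unbounded above; since $r<0$ there, $A_0^- = -L^{(-)}$ is unbounded below, hence $\supp d\Sigma_-$ is unbounded below.

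Without loss of generality assume $A_{\min}^+$ is semi-bounded (the case of $A_{\min}^-$ is symmetric under $+\leftrightarrow-$). Because $A_{\min}^+$ has deficiency indices $(1,1)$, every self-adjoint extension of $A_{\min}^+$, and in particular $A_0^+$, inherits semi-boundedness on the same side: two self-adjoint extensions of a finite-defect symmetric operator differ by a finite-rank resolvent perturbation, so they share the essential spectrum and their discrete spectra differ only by finitely many eigenvalues. Since $A_0^+$ is unbounded above, $A_{\min}^+$ must be semi-bounded from below, and consequently so is $A_0^+$, giving $\supp d\Sigma_+ \subset [c,\infty)$ for some $c \in \R$. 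Because $\supp d\Sigma_-$ is unbounded below, it contains points less than $c$, so $\supp d\Sigma_+ \neq \supp d\Sigma_-$, and the first-paragraph reduction yields $\rho(A) \neq \emptyset$. The one genuinely delicate step is this transfer of semi-boundedness from $A_{\min}^+$ to $A_0^+$, which rests on the finite-deficiency extension theory underpinning Section \ref{ss FM Js}.
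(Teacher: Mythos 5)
Your argument is correct and follows essentially the same route the paper intends: the paper gives no detailed proof, only pointing to \cite[Proposition 3.1]{KarKos08} and the remark that the argument "uses information on the supports of the spectral measures $d\Sigma_\pm$", which is exactly your reduction (semi-boundedness forces $\supp d\Sigma_+$ to lie in a half-line while $\supp d\Sigma_-$ is unbounded below, hence $d\Sigma_+\neq d\Sigma_-$ and Proposition \ref{p s=C} gives $\rho(A)\neq\emptyset$), and your transfer of semi-boundedness from $A_{\min}^\pm$ to the Neumann extensions $A_0^\pm$ via finite deficiency indices is the right "slight modification". One technical caveat: under the paper's hypotheses only $1/p\in L^1_{\loc}$ and $p>0$ a.e. are assumed, so $p$ need not be locally bounded (or even locally integrable) and $\phi(x)e^{inx}$ need not lie in the form domain; the standard fix is to oscillate in the variable $s=\int_0^x d\tau/p(\tau)$, i.e.\ take $y_n(x)=u_n(s(x))$ with $u_n$ smooth and compactly supported, for which $\int p|y_n'|^2\,dx=\int|u_n'(s)|^2\,ds$ and the remaining integrals reduce to $\int|q|\,dx$ and $\int|r|\,dx$ over a compact subinterval, after which your Rayleigh-quotient conclusion that $L^{(\pm)}$ is unbounded above stands.
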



\begin{rem} \label{r loc def}
\textbf{(1)}
 Proposition \ref{p nemp sb} has the following application to the theory of locally definitizable operators (see \cite{J03} for basic definitions): \emph{the operator $A = \frac {\sgn (x)}{|r|} (-\frac d{dx} p \frac d{dx} +q)$ introduced in Section \ref{ss SLModel} is locally definitizable in some open neighborhood of $\infty$ if and only if corresponding operator $L = \frac 1{|r|} (-\frac d{dx} p \frac d{dx} +q)$ is semi-bounded from below.}
This is a natural generalization of \cite[Theorem 3.10]{KarTr07}, where the above criterion for $r(x) = \sgn x$ and $p \equiv 1$ was obtained.
The proof of \cite[Theorem 3.10]{KarTr07} (based on \cite{B06}) remains valid in general case if Proposition \ref{p nemp sb}  is used instead of \cite[Proposition 2.5 (iv)]{KarMMM07}.

Local definitizability of Sturm-Liouville operators 
with the weight function $r$ having more than one turning point was considered in \cite{B07}. 

\textbf{(2)} And vise versa, it was noticed in \cite[Proposition 4.1]{KarTr07} that local definitizability results could be used to get additional information on non-real spectrum. Namely, the above criterion of local definitizability implies that \emph{the non-real spectrum $\sigma(A) \setminus \R$ of the
operator $A = \frac {\sgn (x)}{|r|} (-\frac d{dx} p \frac d{dx} +q)$ is bounded if the operator $L = \frac 1{|r|} (-\frac d{dx} p \frac d{dx} +q)$ is semi-bounded from below} (the proof is immediate from the definition of the local definitizability).

\textbf{(3)} Under the assumption that $\aexp [y] = (\sgn x)(-y''  +qy)$ is in the limit point case in $\pm \infty$, the fact that $\rho (A) \neq \emptyset$ was noticed by
M.~M.~ Malamud and the author of this paper during the work on \cite{KarMMM04}, and was published in \cite{KarKr05, KarMMM07}. 
\end{rem}

\section{The absence of embedded eigenvalues and other applications}
\label{s apl}

\subsection{The absence of embedded eigenvalues for the case of infinite-zone potentials}

Theorems \ref{t s p} and \ref{t s p GC} can be applied to prove that the  Sturm-Liouville operator $A$ has no embedded eigenvalues in the essential spectrum
if some information on the spectral measures $d\Sigma_\pm$ is known.

We illustrate the use of this idea on operators $A = (\sgn x)L$, where $L=-d^2/dx^2 + q(x)$ is an operator in $L^2 (\R)$ with infinite-zone potentials $q$ (in the sense of \cite{Lev84}, the definition is given below).
 First recall that the operator $L=-d^2/dx^2 + q(x)$ with infinite zone potentials $q$ is defined on the maximal natural domain and is self-adjoint in $L^2 (\R)$ (i.e., the differential expression is in the limit point case both at $\pm\infty$). The spectrum of $L$ is absolutely
 continuous and has the zone structure, i.e.,
\begin{equation} \label{e sinf}
\sigma (L) = \sigma_{\ac} (L)=[\mur_0 , \mul_1 ] \cup [\mur_1 ,
\mul_2 ] \cup \cdots,
\end{equation}
where $\{ \mur_j \}_0^\infty$ and $\{ \mul_j \}_{j=1}^{\infty}$
are sequences of real numbers such that
\begin{equation} \label{e mu<mu}
\mur_0 < \mul_1 < \mur_1 < \dots < \mur_{j-1} < \mul_j < \mur_j <
\dots \quad ,
\end{equation}
and
\[
\lim_{j\to \infty} \mur_j = \lim_{j\to \infty} \mul_j = +\infty.
\]
$\mul_j$ ($\mur_j$) is the left (right, resp.) endpoint of the j-th gap in the spectrum $\sigma (L)$,
the "zeroth" gap is $(-\infty, \mur_0$).
Following \cite{Lev84}, we briefly recall the definition of infinite-zone potential under the additional assumptions that
 \begin{equation} \label{e SumGap}
\sum_{j=1}^\infty \mur_j (\mur_j - \mul_j) < \infty, \qquad
\sum_{j=1}^\infty \frac 1{\mul_j} < \infty . \quad
\end{equation}
Consider infinite sequences  $\{ \xi_j \}_1^\infty $ and
$\{\epsilon_j \}_1^\infty $ such that $ \xi_j \in [\mul_j , \mur_j
] $, $\epsilon_j \in \{-1,+1\}$
 for all $j \geq 1$.
For every $N \in \N$, put
\begin{eqnarray}
  &
g_N  =  \prod_{j=1}^N \frac{\xi_j - \lambda }{\mul_j}, \quad
f_N  = (\lambda - \mur_0) \prod_{j=1}^N \frac{\lambda - \mul_j}
{\mul_j}
\frac{\lambda- \mur_j }{\mul_j}, \label{e fNgN} \\
 &
k_N (\lambda)  =  g_N (\lambda)
\sum_{j=1}^N \frac{\epsilon_j \sqrt{-f_N
(\xi_j)}}{g'_N(\xi_j)(\lambda-\xi_j)} , \quad h_N (\lambda)  =
\frac{f_N (\lambda) + k_N^2 (\lambda)}{g_N (\lambda)}.
\label{e kNhN}
\end{eqnarray}

It is easy to see from (\ref{e SumGap}) that $g_N$ and $f_N$
converge uniformly on every compact subset of $\C$. Denote 
\[
\lim_{N \to \infty} g_N (\lambda) =: g (\lambda), 
\quad \lim_{N \to
\infty} f_N (\lambda) =: f (\lambda).
\]
 \cite[Theorem 9.1.1]{Lev84}
states that there exist limits 
\[
\lim_{N \to \infty} h_N (\lambda)
=: h (\lambda), \quad \lim_{N \to \infty} k_N (\lambda) =: k
(\lambda) \quad \text{for all} \ \ \lambda \in \C.
\]
 Moreover, the functions $g$,
$f$, $h$, and $k$ are holomorphic in $\C$.

It follows from \cite[Subsection 9.1.2]{Lev84} that
 the functions
     \begin{equation}\label{e ML inf}
m_{\Sl\pm}(\lambda)  :=  \pm  \frac {g(\lambda)}{ k(\lambda) \mp 
i\sqrt{f( \lambda)}}
       \end{equation}
are the Titchmarsh-Weyl $m$-coefficients on  $\R_{\pm}$
(corresponding to the Neumann boundary conditions) for some
Sturm-Liouville operator  $L=-d^2/d x^2 +q(x)$ with a real bounded
potential $q(\cdot)$. The branch $\sqrt{f( \cdot)}$ of the
multifunction is  chosen  such  that both  $m_{\pm}$ belong
to the class $(R)$ (see Section \ref{ss SLModel} for the definition).

          \begin{defn}[\cite{Lev84}] \label{d infZp}
A real potential $q$ is called an infinite-zone potential if the
Titchmarsh-Weyl $m$-coefficients $m_{\Sl\pm}$ associated with $-d^2/d x^2 +q(x)$ on $\R_{\pm}$ admit representations
(\ref{e ML inf}).
       \end{defn}

Let $q$ be an infinite-zone potential defined as above. B.~Levitan proved that under
the additional condition $\inf (\mul_{j+1} - \mul_j) >0$, the potential $q$ is \emph{almost-periodical} (see \cite[Chapter 11]{Lev84}).

The following theorem describes the structure of the spectrum of the J-self-adjoint operator $A = (\sgn x )L$. Note that the Titchmarsh-Weyl m-coefficients $M_{\Sl\pm}$ for $A$ introduced in Section \ref{ss SLModel} are connected with m-coefficients for $L$ through
\begin{equation} \label{e M=m}
M_{\Sl\pm} (\lambda) = \pm m_{\Sl\pm} (\pm \lambda),
\quad \lambda \in \C \setminus \supp d\Sigma_\pm \quad \text{(see e.g. \cite[Section 2.2]{KarMMM07})}.
\end{equation}

\begin{thm}\label{th_InfZ}
Let $L=-d^2/d x^2 + q(x)$ be a Sturm-Liouville operator with an
infinite-zone potential $q$ and let $A=(\sgn x)L$. Assume also that assumptions (\ref{e SumGap}) are satisfied for the zones of the spectrum
$\sigma(L)$. Then:
 \begin{description}
 \item[(i)] $\sigma_{\mathrm{p}}(A)=\sigma_{\disc} (A)$, that is all the eigenvalues of $A$ are isolated and have finite algebraic multiplicity.
 Besides, all the eigenvalues and their geometric and algebraic multiplicities are given by statements (ii)-(v) of Proposition \ref{p e and d}.
 \item[(ii)] The nonreal spectrum $\sigma(A) \setminus \R $ consists of a finite number of eigenvalues.
 \item[(iii)] $\sigma_{\ess} (A) = \left( \bigcup_{j=0}^\infty [\mu^r_j, \mu^l_{j+1} ] \right) \
 \cup \ \left( \bigcup_{j=0}^\infty [- \mu^l_{j+1}, -\mu^r_j  ] \right) $.
 \end{description}
\end{thm}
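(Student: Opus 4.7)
By Theorem \ref{t SLModel}, $A$ is unitarily equivalent to $\wh A\{\Sigma_{\Sl+},C_{\Sl+},\Sigma_{\Sl-},C_{\Sl-}\}$, and \eqref{e asM} ensures $M_{\Sl+}\not\equiv M_{\Sl-}$, so Proposition \ref{p e and d} applies throughout. For (iii), the representation \eqref{e ML inf} gives $\im m_{\Sl+}(t+i0)>0$ iff $f(t)>0$, which by \eqref{e fNgN} is precisely $\bigcup_{j=0}^\infty(\mur_j,\mul_{j+1})$. Real poles of $m_{\Sl+}$ can only lie in the open gaps $(\mul_j,\mur_j)$ and are isolated, hence in $\sigma_{\disc}(Q_{\Sigma_{\Sl+}})$, so $\sigma_{\ess}(Q_{\Sigma_{\Sl+}})=\bigcup_{j=0}^\infty[\mur_j,\mul_{j+1}]$. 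Via $M_{\Sl-}(\lambda)=-m_{\Sl-}(-\lambda)$ from \eqref{e M=m}, reflection gives $\sigma_{\ess}(Q_{\Sigma_{\Sl-}})=\bigcup_{j=0}^\infty[-\mul_{j+1},-\mur_j]$, and (iii) follows from Proposition \ref{p e and d}(i).

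For (i), on band interiors the density $d\Sigma_{\Sl\pm}/dt$ is strictly positive, and by \eqref{e ML inf} it vanishes at each band edge like $c\sqrt{|t-\mur_j|}$; both conclusions yield divergence of $\int_\R|t-\lambda|^{-2}d\Sigma_{\Sl\pm}(t)$, so every $\lambda\in\sigma_{\ess}(Q_{\Sigma_{\Sl\pm}})$ belongs to $\mathfrak{A}_0(\Sigma_{\Sl\pm})$, and Theorem \ref{t s p GC}(1) excludes embedded eigenvalues. Any other eigenvalue of $\wh A$ lies either in $\sigma_{\disc}(Q_{\Sigma_{\Sl+}})\cap\sigma_{\disc}(Q_{\Sigma_{\Sl-}})$ or in $\rho(Q_{\Sigma_{\Sl+}})\cap\rho(Q_{\Sigma_{\Sl-}})$; by Proposition \ref{p e and d}(iv)--(v) each is an isolated zero of a holomorphic function with finite multiplicity, so $\sigma_p(A)=\sigma_{\disc}(A)$.

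For (ii), semi-boundedness of $L$ (since $\sigma(L)\subset[\mur_0,+\infty)$) together with Remark \ref{r loc def}(2) gives boundedness of $\sigma(A)\setminus\R$. Non-real eigenvalues coincide with zeros of $\Phi:=M_{\Sl+}-M_{\Sl-}$ in $\C\setminus\R$, where $\Phi$ is analytic and, by \eqref{e asM}, not identically zero. To rule out accumulation of these zeros I would argue: at a real point outside $\supp d\Sigma_{\Sl+}\cup\supp d\Sigma_{\Sl-}$, $\Phi$ is analytic and accumulation would force $\Phi\equiv0$; at an interior point of a band of $\supp d\Sigma_{\Sl+}$ that is disjoint from $\supp d\Sigma_{\Sl-}$, the quantity $\im\Phi(\lambda_0+i\varepsilon)$ stays bounded away from zero for small $\varepsilon>0$; at a band edge the local expansion $M_{\Sl\pm}(\lambda)=a_\pm+b_\pm\sqrt{\lambda-\mur_j}+O(\lambda-\mur_j)$ derived from \eqref{e ML inf} admits only finitely many complex zeros in any small neighborhood. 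Only finitely many bands intersect the bounded region, so (ii) follows.

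The main obstacle I expect is the band-edge non-accumulation step in (ii), which requires local control of the two branches $\sqrt{f(\lambda)}$ and $\sqrt{f(-\lambda)}$ appearing in $\Phi$ (via \eqref{e ML inf} and $M_{\Sl-}(\lambda)=-m_{\Sl-}(-\lambda)$); the limiting identity $hg=f+k^2$ obtained from $h_Ng_N=f_N+k_N^2$ in \eqref{e kNhN} is convenient for extracting the local model. An additional delicate point is the overlap of $\supp d\Sigma_{\Sl+}$ and $\supp d\Sigma_{\Sl-}$, which may occur when $\mur_0\leq0$; on the overlap both imaginary parts are positive on $\R$, and one must argue that the coincidence set of the boundary values of $M_{\Sl+}$ and $M_{\Sl-}$ has no cluster point, using the distinctness of the Weyl functions on $\R_+$ and $\R_-$.
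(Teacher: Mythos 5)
Your treatment of parts (i) and (iii) matches the paper's proof: the same identification of $\sigma_{\ess}(Q_{\Sigma_{\Sl\pm}})$ with $\pm\sigma(L)$, and the same use of the Stieltjes inversion formula to show the density $\Sigma_{\Sl\pm}'$ is bounded below on band interiors and vanishes only like $C|t-\lambda_0|^{1/2}$ at band edges, so that $\int|t-\lambda_0|^{-2}\,d\Sigma_{\Sl\pm}=\infty$ and the bands lie in $\mathfrak{A}_0(\Sigma_{\Sl\pm})$; Theorem \ref{t s p} (1) then excludes embedded eigenvalues. The first half of (ii) — boundedness of $\sigma(A)\setminus\R$ via semi-boundedness of $L$ and Remark \ref{r loc def} (2) — is also exactly the paper's argument.

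Where your plan has a genuine gap is the second half of (ii), the finiteness of the zero set of $\Phi=M_{\Sl+}-M_{\Sl-}$ in the bounded region. You propose a case-by-case local analysis of possible real accumulation points (points off the supports, band interiors, band edges, and the overlap of $\supp d\Sigma_{\Sl+}$ with $\supp d\Sigma_{\Sl-}$), and you yourself flag the band-edge and overlap cases as unresolved obstacles. As stated, the band-interior case controls $\im\Phi$ only along the vertical ray $\lambda_0+i\ep$ rather than in a full half-neighborhood, and the overlap case is left as "one must argue..." with no mechanism supplied. The paper avoids all of this with one algebraic device you are missing: using \eqref{e ML inf}, \eqref{e M=m}, and the identity $h g-k^2=f$ from \eqref{e hg-k2=f}, it clears denominators and squares the equation $M_{\Sl+}(\lambda)=M_{\Sl-}(\lambda)$ twice to eliminate both branches $\sqrt{f(\lambda)}$ and $\sqrt{f(-\lambda)}$, obtaining
\[
\Bigl( [g(\lambda)k(-\lambda)-g(-\lambda)k(\lambda)]^2+g^2(\lambda)f(-\lambda)+g^2(-\lambda)f(\lambda)\Bigr)^2-4\,g^2(\lambda)g^2(-\lambda)f(\lambda)f(-\lambda)=0 ,
\]
whose left-hand side is an \emph{entire} function containing every zero of $\Phi$ among its zeros. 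Its non-triviality is checked by evaluating at real points of $(\sigma(L)\setminus\sigma(-L))\cup(\sigma(-L)\setminus\sigma(L))$ where $g(\lambda)g(-\lambda)\neq0$, and the identity theorem then gives at once that $\Phi$ has only finitely many zeros in any bounded set — band edges, branch points, and support overlaps included. Without this (or an equivalent global argument), your local analysis does not yet constitute a proof of (ii).
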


The functions $g$, $f$, $k$, and $h$ defined above are holomorphic
in $\C$. Moreover, $g$ and $f$ admit the
following representations
\[ 
g (\lambda)=  \prod_{j=1}^\infty \frac{\xi_j - \lambda }{\mul_j},
\quad f (\lambda)= (\lambda- \mur_0) \prod_{j=1}^N \frac{\lambda - \mul_j}
{\mul_j} \frac{\lambda- \mur_j }{\mul_j}, 
\]  
where the infinite products converge uniformly on all compact
subsets of $\C$ due to assumptions (\ref{e SumGap}) (see
\cite[Section 9]{Lev84}).
It follows from (\ref{e kNhN}) that
\begin{equation} \label{e hg-k2=f}
h_N (\lambda) g_N (\lambda) - k^2_N (\lambda) = f_N (\lambda)
\quad \mathrm{and} \quad h (\lambda) g (\lambda) - k^2
(\lambda) = f (\lambda).
\end{equation}
From this and (\ref{e M=m}) we get
       \begin{equation}\label{e M infZ}
M_\pm (\lambda ) = \frac {g(\pm \lambda)}{ k(\pm \lambda) \mp
i \sqrt{f(\pm \lambda)}} =  \frac {k(\pm \lambda) \pm 
i\sqrt{f(\pm \lambda)}  }{ h (\pm \lambda)} .
      \end{equation}
It follows from (\ref{e hg-k2=f}) that the zeros of the function $h$ belongs to $(-\infty, \mu^r_0] \cup \bigcup_{j=1}^\infty [\mu^l_j, \mu^r_j] )$.
Besides, all the zeroes of $h$ have multiplicity 1 (otherwise one of the functions $M_{\Sl\pm}$ does not belongs to the class (R)).
This implies that the spectra of the operators $A_0^\pm$ defined in Section \ref{ss SLModel} have the following structure:
\begin{gather}
\sigma_{\ess} (A_0^\pm ) =  \sigma_{\ac} (A_0^\pm ),
\qquad \sigma_{\ac} (A_0^+) = - \sigma_{\ac} (A_0^-) = \sigma (L) = \bigcup_{j=0}^\infty [\mu^r_j, \mu^l_{j+1} ] ,
\label{e sacA2pm} \\
\sigma_p (A_0^\pm) = \sigma_{\disc} (A_0^\pm) = \{ \lambda \in \R \setminus \sigma (\pm L) : h (\pm \lambda) = 0 ,
 \ \  k(\pm \lambda) \pm \mathrm{i} \sqrt{f(\pm
\lambda)} \neq 0 \}.\label{e SpDisc FZA2}
     \end{gather}

\begin{proof}[Proof of Theorem \ref{th_InfZ}]
Since $\sigma (A) \neq \C$ (see Section \ref{ss nonempty}), Proposition \ref{p e and d} (i) proves \textbf{(iii)}.

\textbf{(i)} We have to show that there are no eigenvalues in $\sigma_{\ess} (A) $.
This statement follows from the fact that
$ \pm \bigcup_{j=0}^\infty [\mu^r_j, \mu^l_{j+1} ] \subset \mathfrak{A}_0 (\Sigma_{\Sl\pm}) $ (see \eqref{e fA0} for the definition) and Theorem \ref{t s p} (1).
Indeed, let $\lambda_0$ be in $\bigcup_{j=0}^\infty [\mu^r_j, \mu^l_{j+1} ]$. Note that \eqref{e sacA2pm} implies that
$\lambda_0 \in \sigma_{ac} (A_0^+) = \sigma_{ac} (Q_{\Sigma_{\Sl+}}) $.
It follows from \eqref{e M infZ}, \eqref{e MpmInt} and the Stieltjes inversion formula (see e.g. \cite[formula (2.9)]{KarKos08}) that
\begin{gather}
\Sigma_{\Sl\pm} ^\prime (t) = \frac{\sqrt{ f (\pm t)}} { \pi h (\pm t) }
\quad \text{for a.a.} \quad t \in \pm \bigcup_{j=0}^\infty [\mu^r_j, \mu^l_{j+1} ]
          \label{e Sac pr}
\end{gather}
In particular, if $\lambda_0 \in \bigcup_{j=0}^\infty (\mu^r_j, \mu^l_{j+1} )$,
then $\Sigma_{\Sl+}' (t) \geq C_1 $ for $|t-\lambda_0|$ small enough and a certain constant $C_1>0$. So
\begin{equation} \label{e int l0}
 \int_\R |t-\lambda_0|^{-2} d\Sigma_{\Sl+} (t) = \infty .
 \end{equation}
In the case when $\lambda_0 = \mu^r_j$ (or $\lambda_0 = \mu^l_j$), \eqref{e int l0} also holds since
$\Sigma_+' (t) \geq C_2 |t - \lambda_0 |^{1/2}$, $C_2 >0$, for $t$ in a certain left (resp., right) neighborhood of $\lambda_0$.
Arguments for $ \sigma (-L) \subset \mathfrak{A}_0 (\Sigma_-) $ are the same. This concludes the proof of \textbf{(i)}.

\textbf{(ii)} We have to prove only that $\sigma (A) \setminus \R$ is finite. \cite[Proposition 4.1]{KarTr07} (see also Remark \ref{r loc def} (2)) implies that
\begin{equation} \label{e sAbound}
\sigma (A) \setminus \R \quad \text{is a bounded set.}
\end{equation}

Proposition \ref{p e and d} (ii) states that points of $\sigma (A) \setminus \R$ are zeros of the holomorphic in $\C \setminus (\supp d\Sigma_{\Sl+} \cup \supp d\Sigma_{\Sl-})$ function  $M_{\Sl+} ( \lambda) - M_{\Sl-} ( \lambda)$, and therefore, are roots of each of the following equations
(each subsequent equation is a modification of the previous one)
\[
\frac {g(\lambda)}{ k(\lambda) - 
i \sqrt{f( \lambda)}} = \frac {g(-\lambda)}{ k(-\lambda) + 
i \sqrt{f( -\lambda)}} ,
\]
\begin{multline*}
(g(\lambda)k(-\lambda) - g(-\lambda)k(\lambda))^2
= \\ =
- g^2(\lambda) f( -\lambda) - g^2 (-\lambda) f( \lambda) -
2g(\lambda) g(-\lambda) \sqrt{f( -\lambda) } \sqrt{f( \lambda)} ,
\end{multline*}
\begin{multline*}
\Bigl( [g(\lambda)k(-\lambda) - g(-\lambda)k(\lambda)]^2 + g^2(\lambda) f( -\lambda) + g^2 (-\lambda) f( \lambda) \Bigr)^2
- \\ -
4 g^2(\lambda) g^2(-\lambda) f( -\lambda) f( \lambda)= 0 .
\end{multline*}
The entire function in the left side of the last equation is not identically zero since it is positive in the points of the set
\[
\{ \lambda \in (\sigma (L) \setminus \sigma (-L)) \cup (\sigma (-L) \setminus \sigma (L)) \ : \ g(\lambda) g(-\lambda) \neq 0 \}
\]
(this set is nonempty due to \eqref{e sinf}, \eqref{e ML inf}, and the fact that $M_{\Sl\pm} (\cdot) \not \equiv 0$).
Therefore,
\begin{multline} \label{e sp inf-z}
M_{\Sl+} - M_{\Sl-} \quad \text{has a finite number of zeros} \\
\text{in any bounded subset of} \quad \C \setminus (\supp d\Sigma_{\Sl+} \cup \supp d\Sigma_{\Sl-}) \quad (\supset \C \setminus \R).
\end{multline}
Combining this and \eqref{e sAbound}, we see that $\sigma (A) \setminus \R$ is finite.
\end{proof}

Note that \eqref{e sp inf-z} implies that each gap of the essential spectrum $\sigma_{\ess} (A)$ has at most finite number of eigenvalues.


\subsection{Other applications}
\label{ss other appl}
A part of this paper was obtained in author's candidate thesis \cite{KarDis}, was announced together with some applications in the short communication \cite{KarKr05},
and was a base of several author's conference talks in 2004-2005.
Namely, in \cite{KarDis,KarKr05}, the operator $\wh A \{\Sigma_+, C_+, \Sigma_-, C_- \}$ was introduced as a functional model for the operator $(\sgn x) (-d^2/dx^2 +q)$ (see Theorem \ref{t SLModel})
and  the description of eigenvalues under conditions \eqref{e CpmCond} was given (see Theorem \ref{t s p}).
These results were used in \cite{KarMMM07,KarKos08}.
The idea of the functional model originated from \cite{KarKos03,KarMMM04}, where a representation of the operator $(\sgn x) (-d^2/dx^2 +q)$
as an extension of a direct sum of two symmetric Sturm-Liouville operators was used essentially
(in a less explicit form the same idea appeared earlier in \cite{CN95,FN98}).

The absence of embedded eigenvalues in the essential spectrum of the operator $(\sgn x) (-d^2/dx^2 +q)$
with a finite-zone potential $q$ was proved in \cite[Theorem 7.1 (2)]{KarMMM07} via Theorem \ref{t s p}.
This proof is adopted in part (i) of Theorem \ref{th_InfZ} for the infinite-zone case.

Theorem \ref{t s p} helps to find algebraic multiplicity
of embedded eigenvalues. This was used in a paper of A.~Kostenko and the author
(see \cite{KarKos08}, Proposition 2.2, Theorems 6.1 (ii) and 6.4 (ii)) to prove simplicity of the eigenvalue $\lambda = 0$
for two operators of
type $(\sgn x)(-d^2/dx^2 +q)$. 
This fact and necessary conditions for regularity of
critical points (see \cite[Theorem 3.9]{KarKos08}) allowed us to show that $0$ is a singular critical point for the considered operators
(see \cite[Remark 6.3, Theorem 6.4 (iii)]{KarKos08} and also Section \ref{s sing0} of the present paper).

The fact that $0$ may be a non-semi-simple eigenvalue (i.e., $\ker A^2 \neq \ker A$) is essential for
the theory of "two-way" diffusion equations. In the simplest case,
such equations lead to spectral analysis of $J$-nonnegative operators that take the form of the operator $A$ introduced in Section \ref{ss SLModel}. If $0$ and $\infty$ are not singular critical points of $A$ than
the algebraic multiplicity of $0$ affects proper settings of boundary value problems for the corresponding diffusion equation
(see \cite{BP83,B85,GvdMP87}).
If $0$ is a singular critical point of $A$,
as in the examples constructed in \cite{KarKos08,KarKosM09} and Section \ref{s sing0},
the existence and uniqueness theory for corresponding diffusion equations is not well-understood
(see \cite[Section 1]{KvdMP87}, \cite{C00,vdM08,Kar08}).

\begin{rem} \label{r embeig}
For periodic potentials and certain classes of decaying potentials, asymptotic behavior of solutions of
$-y''+qy=\lambda y $ is well-known and yields the absence of eigenvalues in some parts of the spectrum
of the operator $A=(\sgn x)L$ (where $L=-d^2/dx^2+q $ is a self-adjoint operator in $L^2 (\R)$).

For example, the assumption $\int_{-\infty}^{+\infty} (1+|x|) |q(x)| dx < \infty$ yields $\sigma_p (A) \cap \R = \emptyset$ (see e.g. \cite[Lemma 3.1.1 and formula (3.2.4)]{Mar77}). This fact was used essentially in \cite[Section 4]{KarKosM09} to prove that, for this class of potentials, $A$ is similar to self-adjoint operator exactly when $L\geq 0$. For periodic potentials, $\sigma_{\ess} (A) \cap \sigma_p (A) = \emptyset$ follows from
\cite[Section 21]{Tit58}. For $q\in L^1 (\R)$, the fact that the only possible real eigenvalue is $0$ (i.e., $\sigma_p (A) \cap \R \subset \{0\}$) follows immediately from \cite[Section 5.7]{Tit58} or from \cite[Problem IX.4]{CodLev58Rus}.
\end{rem}

\section{Remarks on indefinite Sturm-Liouville operators with the singular critical point $0$}
\label{s sing0}

In this section, we provide an alternative approach to examples of $J$-self-adjoint Sturm-Liouville operators
with the singular critical point $0$ constructed in \cite[Sections 5 and 6]{KarKos08} and \cite[Section 5]{KarKosM09}.

As before, let $H$ be a Hilbert space  with a scalar product $(\cdot, \cdot
)_H$. Suppose that $H = H_+  \oplus H_- $, where $H_+$ and $H_-$
are (closed) subspaces of $H$. Denote by $P_\pm$ the orthogonal
projections from $H$ onto $H_\pm$. Let $ J=P_+ - P_-$ and
$[ \cdot, \cdot ] := (J \cdot ,\cdot )_H $. Then the pair $K =
(H , [\cdot, \cdot])$ is called a \emph{Krein space}.
The operator $J$ is called a fundamental
symmetry (or a signature operator) in the Krein space $K$.
Basic facts on the theory of Krein spaces and the theory of J-self-adjoint definitizable operators
can be found e.g. in \cite{Lan82}.
An account on (non-differential) operators with a finite singular critical point can be found in \cite{CGL00}. The following proposition is
a simple consequence of \cite[Theorem II.5.7]{Lan82}.

\begin{prop} \label{p cr p 0}
Assume that a $J$-self-adjoint definitizable operator $B$ has a simple eigenvalue $\lambda_0 \in \R$
and that a corresponding eigenvector $f_0 \in \ker (B-\lambda_0) \setminus \{0\}$ is neutral (i.e., $[f_0,f_0]=0$).
Then $\lambda_0$ is a singular critical point of $B$.
\end{prop}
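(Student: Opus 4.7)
The plan is to argue by contradiction: suppose $\lambda_0$ is not a singular critical point of $B$, so $\lambda_0$ is either a point of $\sigma(B)$ of positive or negative type or a regular critical point. Theorem II.5.7 of \cite{Lan82} then supplies a bounded open interval $\Delta \ni \lambda_0$ such that the Krein-space spectral projection $E(\Delta)$ of $B$ exists as a bounded operator, the subspace $K_\Delta := E(\Delta)H$ is $B$-invariant and, equipped with the inherited inner product $[\cdot,\cdot]\uph K_\Delta$, is a Pontryagin space (in the definite-type case it is even a Hilbert or anti-Hilbert space); the restriction $B_\Delta := B \uph K_\Delta$ is then a bounded $J$-self-adjoint, definitizable operator on $K_\Delta$.

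Next I would verify that the hypotheses of the proposition descend to $B_\Delta$. Since $f_0 \in \ker(B-\lambda_0)$ and $\lambda_0 \in \Delta$, we have $E(\{\lambda_0\}) f_0 = f_0$, so $f_0 \in K_\Delta$. Simplicity of $\lambda_0$ for $B$ means $\ker(B-\lambda_0) = \ker(B-\lambda_0)^2 = \C f_0$; intersecting with $K_\Delta$ gives $\ker(B_\Delta - \lambda_0) = \ker(B_\Delta - \lambda_0)^2 = \C f_0$. Hence $\lambda_0$ is a simple real eigenvalue of the $J$-self-adjoint operator $B_\Delta$ on the Pontryagin space $K_\Delta$, with neutral eigenvector $f_0$.

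The contradiction then comes from the Jordan canonical form for $J$-self-adjoint operators on Pontryagin spaces: for a real eigenvalue with one-dimensional geometric eigenspace, a Jordan chain of length $k$ renders $[\cdot,\cdot]$ anti-diagonal on the $k$-dimensional root subspace, so the top eigenvector satisfies $[f_0, f_0] \neq 0$ if and only if $k=1$. Thus neutrality of $f_0$ forces a Jordan chain of length $\geq 2$, producing a vector $f_1 \in K_\Delta$ with $(B_\Delta - \lambda_0) f_1 = f_0$ and $f_1 \notin \ker(B_\Delta - \lambda_0)$, contradicting the semi-simplicity established above. Therefore $\lambda_0$ must be a singular critical point of $B$. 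The main obstacle is the first step: extracting from Theorem II.5.7 of \cite{Lan82} the local Pontryagin reduction around the (possibly embedded) eigenvalue $\lambda_0$; once $B_\Delta$ on $K_\Delta$ is in hand, the remainder is elementary indefinite linear algebra.
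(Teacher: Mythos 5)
Your overall strategy (contradiction via Langer's Theorem II.5.7) is the same as the paper's, but both halves of your execution have problems, and together they leave a genuine gap. First, the local reduction is not to a Pontryagin space: for an interval $\Delta \ni \lambda_0$ the spectral subspace $E(\Delta)H$ with the inherited form $[\cdot,\cdot]$ is in general a Krein space with \emph{infinite} positive and negative indices. This happens precisely in the situation the proposition is designed for, namely $\lambda_0$ embedded in the essential spectrum --- e.g.\ for the operator \eqref{e VI_1_01} of Section \ref{s disc}, where $0\in\sigma_{\ess}(A)=\R$ and $E((-\ep,0))H$, $E((0,\ep))H$ are infinite-dimensional negative, resp.\ positive, subspaces. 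So Theorem II.5.7 does not hand you a Pontryagin space $K_\Delta$, and the Iohvidov--Krein--Langer structure theory you want to invoke is not available.

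Second, even granting some invariant reduction, the ``elementary indefinite linear algebra'' at the end is not elementary; it is where the whole content sits. The Hankel identity $[f_i,f_j]=[f_{i+1},f_{j-1}]$ only gives the harmless direction: a chain of length $k\geq 2$ forces $[f_0,f_0]=0$. The direction you actually use --- that $[f_0,f_0]=0$ forces $k\geq 2$ --- requires the root subspace $\Iroot_{\lambda_0}(B)$ to be non-degenerate for $[\cdot,\cdot]$, and for an embedded eigenvalue that is not linear algebra: it is exactly equivalent to being able to split $\Iroot_{\lambda_0}(B)$ off $J$-orthogonally, i.e.\ to the regularity of $\lambda_0$. (At a singular critical point a simple real eigenvalue with a neutral eigenvector does occur; that is the phenomenon being characterized.) The paper's proof goes straight to this point: non-singularity of $\lambda_0$ yields, via Langer (Propositions II.5.1, II.5.6, Theorem II.5.7 and decomposition (II.2.10)), a $J$-orthogonal decomposition $H = H_0\,[\dot +]\,H_1$ reducing $B$ with $H_0=\Iroot_{\lambda_0}(B)=\C f_0$ and $\lambda_0\notin\sigma_p(B\uph H_1)$; then $[f_0,f]=0$ for every $f\in H$, so $f_0$ would be an isotropic vector of the Krein space $H$ --- a contradiction. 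If you repair your argument by first establishing this splitting, the Pontryagin reduction and the Jordan canonical form become unnecessary.
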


\begin{proof}

Assume that $\lambda_0$ is not a singular critical point of $B$. Then there exist a $J$-orthogonal decomposition into a direct sum 
 of two closed subspaces $H = H_0 [\dot +] H_1  $ that reduces the operator $B$ and such that $H_0$ is a root subspace corresponding to $\lambda_0$ and $\lambda_0 \not \in \sigma_p (B \uph H_1)$
(this follows from \cite{Lan82}, Proposition II.5.1, Proposition II.5.6, Theorem II.5.7, and the decomposition (II.2.10)).
Since the eigenvalue $\lambda_0$ is simple, we have $H_0 = \{ cf_0 , c\in \C \}$.
Since $f_0$ is neutral, we see that $[f_0,f] =0$ for all $f$ in $H$. So $\{ H, [\cdot,\cdot] \}$
is not a Krein space, a contradiction.
\end{proof}

It is not difficult to see that the operators
considered in \cite[Sections 5 and 6]{KarKos08} and \cite[Section 5]{KarKosM09} satisfy conditions of Proposition \ref{p cr p 0},
and so Proposition \ref{p cr p 0} can be used in these two papers instead of \cite[Theorem 3.4]{KarKos08}.
This does not makes the proofs much simpler, but the results become clearer from the Krein space point of view.

\begin{rem}
The necessary similarity condition given
in \cite[Theorem 3.4]{KarKos08} is of independent interest
since it provides a criterion of similarity to a self-adjoint operator for operators $(\sgn x)(-d^2/dx^2+q)$
with finite-zone potentials (see \cite[Remark 3.7]{KarKos08}).
And it is unknown \emph{whether the condition of \cite[Theorem 3.4]{KarKos08} provides a criterion of similarity to a self-adjoint operator
for the general operator $\frac {\sgn x}{|r|} (-\frac {d}{dx} p \frac {d}{dx} +q)$ with one turning point
introduced in Section \ref{ss SLModel}}.
\end{rem}

Using Proposition \ref{p cr p 0}, a large class of operators with the singular critical point $0$ similar to that of \cite{KarKos08,KarKosM09} can be constructed.
In the next theorem, we characterize the case described in Proposition \ref{p cr p 0} among the operators $\A_{r}:=-\frac {\sgn x}{|r(x)|} \frac {d^2}{dx^2}$  that have the limit point case both at $\pm \infty$ (and act in $L^2 (\R; |r(x)| dx)$). So we assume that
\begin{gather}
r \in L^1_{\loc} (\R),  \quad r(x) = (\sgn x) |r(x)| ,  \label{e rsgnr} \\
\int_{\R_\pm } x^2 |r(x)| dx = \infty , \label{e rinf}
\end{gather}
and the operator $\A_{r}$ is defined on its maximal domain. Condition \eqref{e rinf} is equivalent to $J$-self-adjointness of $\A_r$
with $J: f(x) \mapsto (\sgn x) f(x)$. Obviously, $\A_r$ is $J$-non-negative and definitizable (see \cite{CL89} and also Proposition \ref{p nemp sb}).

\begin{thm} \label{t cp0}
Let assumptions \eqref{e rsgnr}, \eqref{e rinf} be satisfied. Then:
\begin{description}
\item[(1)] Two following statements are equivalent:
\begin{description}
\item[\textbf{(r1)}] $\A_r$ has a simple eigenvalue at $0$ and $[f_0,f_0]=0$ for $f_0 \in \ker \A_r \setminus \{0\}.$
\item[\textbf{(r2)}] $r \in L^1 (\R)$, $\int_{\R} r(x) dx = 0$, and
\begin{gather} \label{e y1 norm}
\int_{\R} (y_1 (x))^2  |r(x)| dx = +\infty  , \quad \text{where} \quad y_1 (x) := \int_0^x \int_s^{+\infty} r(t) \ dt ds .
\end{gather}
\end{description}
\item[(2)] If \textbf{(r2)} is satisfied, then $0$ is a singular critical point of $\A_r$.
\end{description}
\end{thm}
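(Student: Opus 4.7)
\textbf{Plan of proof of Theorem~\ref{t cp0}.} The approach is to compute $\ker\A_r$ and the first generalized eigenspace at $0$ by elementary ODE analysis, and then to deduce part~(2) as a direct consequence of Proposition~\ref{p cr p 0}. The key identity I will exploit is $y_1''(x) = -r(x)$, which combined with $r = (\sgn x)|r|$ gives $\A_r y_1 \equiv 1$; this will reduce the classification of generalized eigenvectors to an integrability analysis of $y_1$.

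First I describe $\ker\A_r$ and read off the neutrality condition. Any $f$ in the maximal domain with $\A_r f = 0$ must satisfy $f'' = 0$ and is therefore affine, $f(x) = a + bx$. Hypothesis~\eqref{e rinf} excludes the monomial $x$ from $L^2(\R;|r(x)|dx)$, forcing $b = 0$; the constant function then lies in that space iff $r\in L^1(\R)$. Hence $0$ is an eigenvalue of $\A_r$ of geometric multiplicity $1$ exactly when $r\in L^1(\R)$, with eigenvector $f_0\equiv 1$; a direct computation gives $[f_0,f_0] = \int_\R r\,dx$, so $f_0$ is neutral iff $\int_\R r\,dx = 0$. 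This accounts for the first two conditions of \textbf{(r2)}.

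Next I analyze algebraic simplicity. Assuming $r\in L^1$ and $\int r = 0$, I look for $h\in\dom(\A_r)$ with $\A_r h = f_0$. The equation $h'' = -r$ together with $\A_r y_1 = 1$ forces $h = y_1 + a + bx$ for some $a,b\in\C$, so the question reduces to whether any such $h$ can lie in $L^2(\R;|r|dx)$. I then use the asymptotics of $y_1'(x) = \int_x^{+\infty}r(t)\,dt$: this quantity tends to $0$ as $x\to +\infty$ (automatic, since $r\in L^1$) and to $\int_\R r$ as $x\to -\infty$. Under $\int r = 0$, therefore, $y_1$ is sublinear at both infinities, and \eqref{e rinf} forces $b=0$; then $h = y_1 + a$ is in $L^2(|r|dx)$ iff $y_1$ is. Consequently, given $r\in L^1$ and $\int r = 0$, the eigenvalue $0$ is algebraically simple iff \eqref{e y1 norm} holds, yielding \textbf{(r1)}$\Leftrightarrow$\textbf{(r2)}. (One may also verify, as a consistency check, that when $\int r\neq 0$ the element $y_1$ is linear at $-\infty$ but sublinear at $+\infty$, so no single $bx$ can cancel both ends and $0$ is automatically algebraically simple but non-neutral — a case excluded by neutrality.)

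Part~(2) is then immediate: $\A_r$ is $J$-nonnegative and hence definitizable (noted just above the theorem), so Proposition~\ref{p cr p 0} applies with $\lambda_0 = 0$ and $f_0\equiv 1$, making $0$ a singular critical point. I expect the main technical difficulty to be the two-sided asymptotic analysis in the algebraic simplicity step — specifically, the careful exclusion of an accidental cancellation $y_1 + bx\in L^2(|r|dx)$ with $b\neq 0$, which is precisely where \eqref{e rinf} must be invoked at \emph{both} $\pm\infty$ simultaneously and where the dichotomy $\int r = 0$ vs.\ $\int r\neq 0$ becomes decisive.
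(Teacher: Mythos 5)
Your proposal is correct and follows essentially the same route as the paper's own proof: identify $f_0\equiv 1$ as the eigenfunction (so $r\in L^1(\R)$ gives $0\in\sigma_p(\A_r)$ and $[f_0,f_0]=\int_\R r\,dx$), reduce any generalized eigenvector to $y_1$ plus an affine term whose linear part is killed by \eqref{e rinf}, so that simplicity is equivalent to $y_1\notin L^2(\R,|r(x)|dx)$, and deduce part (2) from Proposition \ref{p cr p 0} together with the definitizability of $\A_r$. The only differences are presentational (you make the kernel computation and the two-sided asymptotics of $y_1'$ explicit, which the paper leaves as "it is easy to see").
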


\begin{proof}
We need to prove only \textbf{(1)}. Let us note that $r \in L^1 (\R) $ is equivalent to $0 \in \sigma_p (\A_r) $. If the latter holds, then $f_0 (x) \equiv 1$ is an eigenfunction (unique up to multiplication by a constant), and $[f_0,f_0]=0$ is equivalent to $\int_{\R} r(x) dx = 0$.

Assume that the eigenvalue $0$ is not simple. Then there is a generalized eigenfunction of first order $y \in L^2 (\R, |r(x)|dx)$, which is
a solution of $\A_r y = f_0$. It is easy to see that the derivative of $y$ has the form $y' = y_1^{\prime} + C_1$, where $C_1 \in \C$ is a constant
and $y_1$ is defined by \eqref{e y1 norm}.
Condition $y \in L^2 (\R, |r(x)|dx)$ implies that $C_1 = 0$ (otherwise $y'(x) \to C_1 \neq 0 $ as $x \to \pm \infty$
and, therefore, $y \not \in L^2 (\R; |r(x)| dx)$ due to \eqref{e rinf}).
This shows that $y_1$ is the only possible generalized eigenvector, and \eqref{e y1 norm} ensures that $y_1 \not \in L^2 (\R, |r(x)|dx)$.
Thus, \eqref{e y1 norm} is equivalent to the fact that $0$ is a simple eigenvalue (under the assumptions $r \in L^1 (\R)$, $\int_{\R} r(x) dx = 0$).
\end{proof}

In the following corollary $f(x) \asymp g (x)$ as $x \to +\infty \ (x \to -\infty)$ means that for $X>0$ large enough both $\frac fg$
and $\frac gf$ are bounded on $(X,+\infty)$ (resp.,  $(-\infty,-X)$).

\begin{cor}
Assume that condition \eqref{e rsgnr} is satisfied and $r (x)  \asymp  \pm |x|^{\alpha}$ as $x \to \pm \infty$.
If $\alpha \in [-5/3,-1) $ and $\int_{\R} r(x) dx = 0$, then the operator
$\A_r$ has a singular critical point at $0$.
\end{cor}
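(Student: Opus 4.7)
The plan is to verify the hypothesis \textbf{(r2)} of Theorem \ref{t cp0}, from which conclusion (2) of that theorem gives immediately that $0$ is a singular critical point of $\A_r$. The assumption $r(x) \asymp \pm |x|^\alpha$ as $x \to \pm \infty$ with $\alpha \in [-5/3,-1)$ yields $r \in L^1(\R)$ (since $\alpha < -1$), while condition \eqref{e rinf} holds because $x^2 |r(x)| \asymp |x|^{\alpha+2}$ with $\alpha+2 \geq 1/3 > -1$; the sign condition \eqref{e rsgnr} is part of the hypotheses and $\int_\R r\,dx=0$ is given, so Theorem \ref{t cp0} applies. The only nontrivial verification is \eqref{e y1 norm}.

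To verify \eqref{e y1 norm}, first I would obtain two-sided asymptotics for the tail function $F(s):=\int_s^{+\infty} r(t)\,dt$. For $s\to +\infty$, $F(s) \asymp \int_s^\infty t^\alpha\,dt \asymp s^{\alpha+1}$. For $s\to -\infty$, I use $\int_\R r\,dt = 0$ to rewrite $F(s) = -\int_{-\infty}^s r(t)\,dt$; the bound $r(t) \asymp -|t|^\alpha$ for $t\to -\infty$ then gives $F(s) \asymp |s|^{\alpha+1}$ (with a positive proportionality constant). Note that this step is where the assumption $\int_\R r\,dx=0$ is essential: without it, $F$ would tend to a nonzero constant at $-\infty$, breaking the whole estimate.

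Next, I would plug these asymptotics into $y_1(x)=\int_0^x F(s)\,ds$. Since $\alpha+1 \in [-2/3,0)$, the exponent $\alpha+2 \in [1/3,1)$ is strictly positive, so integration from $0$ picks up the leading power: $|y_1(x)| \asymp |x|^{\alpha+2}$ as $|x|\to\infty$. (This is precisely where the lower bound $\alpha \geq -5/3$ is first used in a qualitative way—via $\alpha+2>0$—but the numerical value is not yet critical.) Therefore $(y_1(x))^2 |r(x)| \asymp |x|^{2(\alpha+2)}\cdot |x|^\alpha = |x|^{3\alpha+4}$ at infinity.

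Finally, the integral $\int_\R (y_1(x))^2 |r(x)|\,dx$ diverges precisely when $3\alpha+4 \geq -1$, i.e., $\alpha \geq -5/3$, which is the given range. Thus \eqref{e y1 norm} holds, all conditions in \textbf{(r2)} are satisfied, and Theorem \ref{t cp0}(2) yields that $0$ is a singular critical point of $\A_r$. The ``hard'' part, such as it is, is matching the sharp threshold $-5/3$: it arises from balancing one power of $\alpha$ in $|r|$ against two powers gained by integrating twice (producing exponent $2(\alpha+2)$), and the inequality $3\alpha+4 \geq -1$ is what pins down the endpoint; the two-sided use of $\int_\R r = 0$ is what makes the same exponent appear at both $\pm\infty$.
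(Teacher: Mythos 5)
Your proposal is correct and is exactly the intended derivation: the paper states the corollary without proof immediately after Theorem \ref{t cp0}, and the natural argument is precisely yours — check condition \textbf{(r2)} by power-counting, using $\int_\R r = 0$ to get $\int_s^{+\infty} r(t)\,dt \asymp |s|^{\alpha+1}$ at both ends, hence $|y_1(x)| \asymp |x|^{\alpha+2}$ and divergence of $\int (y_1)^2|r|$ exactly when $3\alpha+4 \geq -1$. The verification of \eqref{e rinf} and $r\in L^1(\R)$ is also handled correctly, so nothing is missing.
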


This includes \cite[Theorem 5.1 (ii)-(iV)]{KarKos08}, also it is interesting to compare this result with the sufficient condition on regularity of $0$
given in \cite{Kos06},\cite[Theorem 1.3]{KarKosM09}, and the discussions in \cite[Section 5.2]{KarKosM09} and \cite[Section 5]{KarKos09}.

\begin{rem} \label{r sing0}
Theorem \ref{t cp0} can be easily generalized to weight functions $r$ with many turning points under the assumption that
\begin{equation} \label{e c sign}
r(x) \quad \text{is of constant sign a.e. on} \quad (-\infty, -X) \quad \text{and} \quad (X,+\infty)
\end{equation}
for certain $X>0$ large enough.
Indeed, assume additionally $r\in L^1_{\loc} (\R)$, $r \neq 0$ a.e. on $\R$, and \eqref{e rinf}.
\cite[Proposition 2.5]{CL89} implies that the maximal operator $\A_r := -\frac 1r \frac {d^2}{dx^2}$
is definitizable. Now it is easy to see that statements (1) and (2) of Theorem \ref{t cp0} are valid with the same proof for $\A_r = -\frac 1r \frac {d^2}{dx^2}$.
\end{rem}

\section{Discussion \label{s disc}}

From another point of view algebraic multiplicities of eigenvalues of definitizable operators was considered in \cite[Proposition II.2.1]{Lan82} and  \cite[Section 1.3]{CL89} in terms of definitizing polynomials.
For operators $\frac {\sgn x}{|r(x)|} (-\frac{d}{dx}p\frac{d}{dx} +q)$, Theorem \ref{t s p GC} solves the same problem in terms of Titchmarsh-Weyl m-coefficients. Combining both approaches, it is possible
to get quite precise results both on eigenvalues and on definitizing polynomials. Such analysis was done in \cite[Section 4.3]{KarKosM09}
for operators $(\sgn x) (-d^2/dx^2+q)$ with potentials $q \in L^1 (\R; (1+|x|)dx) $; in particular, the minimal
definitizing polynomial was described in terms of Titchmarsh-Weyl m-coefficients (recently, \cite{K78} was used in \cite{BKT09pr} to extended some results of \cite[Section 4.3]{KarKosM09} on a slightly more general class of potentials).

A. Kostenko and later B. \'Curgus informed the author that Theorem \ref{t s p} and \cite[Section 6.1]{KarKos08} are in disagreement with one of the statements of
\cite[p. 39, 1st paragraph]{CL89}.

Namely, \cite[Section 1.3]{CL89} is concerned with $J$-self-adjoint operators $A$ 
(in a Krein space $K =(H , [\cdot, \cdot]))$ 
such that the form $[A \cdot, \cdot ]$ has a finite number $k_A $ of negative squares. Such operators  are sometimes called quasi-$J$-nonnegative.
It is assumed also that $\rho (A) \neq \emptyset$.
Here, as before, $H$ is a Hilbert space, $J$ is a  fundamental symmetry, and 
$[\cdot, \cdot]:=(J \cdot,\cdot)_{H}$.

According to \cite[p. 39, 1st paragraph]{CL89} (the author changes slightly the appearance):
 \begin{description}
 \item[\textbf{(p1)}] an operator $A$ of the type mentioned above has a definitizing polynomial $p_A$ of the form $p_A (z) = z q_A (z) \overline{q_A (\overline{z})}$, where the polynomial $q_A$ can be chosen monic and of minimal degree.  Under these assumptions, $q_A$ is unique and its degree is less than or equal to $k_A$. 
  \item[\textbf{(p2)}] A real number $\lambda \neq 0$ is a zero of $q_A $ if and only if it is an eigenvalue of $A$ such that $\lambda [f, f] \leq 0$ for some corresponding eigenvector $f$.
 \item[\textbf{(p3)}]
$q_A(0) = 0 $ implies that $0$ is an eigenvalue of $A$ and one of corresponding Jordan chains is of length $\geq 2$.
      \end{description}
      
Note also that in the settings of \cite[p. 39, 1st paragraph]{CL89}, $q_A$ is of minimal degree, but it easy to see the definitizing polynomial $p_A$ may be not a definitizing polynomial of minimal degree.

From the author's point of view,   two following statements in \cite[Section 1.3]{CL89} are incorrect: assertion \textbf{(p3)} and the equality $\dim \L_0 = k_A+ k_A^0 $ in \cite[Proposition 1.5]{CL89}.
Statement \textbf{(p3)} was given 
as a simple consequence of considerations in \cite{Lan82}. The proof of \cite[Proposition 1.5]{CL89} has an unclear point, which is discussed below. As far as the author understand, all other results of \cite{CL89} (as well as results obtained in \cite[Section 4.3]{KarKosM09}) do not depend of these two statements.

Let us explain points of contradiction in more details.
In \cite[Section 6.1]{KarKos08}, the operator $A$, defined by
\begin{equation}\label{e VI_1_01}
A:=JL, \quad (Jf) (x) = (\sgn x) f(x), \quad (Ly)(x)=-y''(x)+6\frac{x^4-6|x|}{(|x|^3+3)^2}y(x),
\end{equation}
is considered in $L^2 (\R )$. The operators $L$ and $A$ are defined on their maximal domains, $L$ is self-adjoint, and $A$ is $J$-self-adjoint.
We will use here notations of Section \ref{ss SLModel}. In particular, $\lexp [\cdot]$ is the differential expression of the operator $L$.

We will need the following properties of the operator 
$A$.

\begin{prop}[cf. Section 6.1 in \cite{KarKos08}] \label{p Adec}
Let $A$ be the $J$-self-adjoint operator defined by (\ref{e VI_1_01}). Then:
\begin{description}
\item[(i)] $A$ is a quasi-$J$-nonnegative operator,
  with $k_A =1$, i.e., the sesquilinear form $[A \cdot, \cdot ]$ has one negative square; 
\item[(ii)] $\sigma(A) \subset \R$; 
\item[(iii)] $\sigma_p (A) = \{0\}$;
\item[(iv)] $0$ is a simple eigenvalue of $A$.
\end{description}
\end{prop}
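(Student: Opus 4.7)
The plan is to assemble the four claims by combining the explicit Titchmarsh--Weyl analysis of \cite[Section 6.1]{KarKos08} with the abstract spectral description from Section \ref{s point s}. Throughout, non-emptiness of $\rho(A)$ (so that $A$ is definitizable and the functional model is meaningful) is supplied by Proposition \ref{p nemp sb}, since $L$ is semi-bounded from below.

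For (i), I would start from the identity $[Af,f]=(JAf,f)_{L^2(\R)}=(Lf,f)_{L^2(\R)}$, valid for every $f\in\dom(A)=\dom(L)$; this reduces $k_A$ to the total multiplicity of the negative spectrum of $L$. Since $q$ is bounded with $q(x)=O(|x|^{-2})$ as $|x|\to\infty$, one has $\sigma_{\ess}(L)=[0,+\infty)$, so the negative part of $\sigma(L)$ is purely discrete. The potential in \eqref{e VI_1_01} is tailored in \cite[Section 6.1]{KarKos08} so that $L$ possesses exactly one simple negative eigenvalue (an explicit ground state is exhibited and Sturm oscillation rules out further negative eigenvalues), which yields $k_A=1$.

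For (ii)--(iv), I would invoke Theorem \ref{t SLModel} to realize $A$ as $\wh A\{\Sigma_{\Sl+},C_{\Sl+},\Sigma_{\Sl-},C_{\Sl-}\}$ and then apply Theorem \ref{t s p}. The explicit m-coefficients $M_{\Sl\pm}$ from \cite[Section 6.1]{KarKos08} show that $\supp d\Sigma_{\Sl\pm}=\pm[0,+\infty)$ with $d\Sigma_{\Sl\pm}$ absolutely continuous of strictly positive density on the interior of its support; in particular $\mathfrak{A}_p(\Sigma_{\Sl\pm})=\emptyset$, so only cases 1) and 3) of Theorem \ref{t s p} are relevant. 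Each $\la\in\R\setminus\{0\}$ belongs to $\mathfrak{A}_0(\Sigma_{\Sl+})\cup\mathfrak{A}_0(\Sigma_{\Sl-})$ (as an interior-of-support point of an absolutely continuous measure with positive density, the integral in the definition of $\mathfrak{A}_0$ diverges), and part 1) excludes such $\la$ from $\sigma_p(A)$. For $\la\in\C\setminus\R$ one has $\la\in\mathfrak{A}_r(\Sigma_{\Sl\pm})$, and a direct substitution in the explicit formulas shows that \eqref{e fr1=fr1} fails; hence $\la\not\in\sigma_p(A)$, and by Proposition \ref{p e and d}(i)--(ii) (whose hypothesis is met because $\supp d\Sigma_{\Sl+}\neq\supp d\Sigma_{\Sl-}$) such $\la$ is not in $\sigma(A)$ either, giving (ii).

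The delicate point is $\la=0$, which is the crux. The small-$\la$ expansion of $M_{\Sl\pm}$, obtained in \cite[Section 6.1]{KarKos08} from explicit solutions of the zero-energy equation $-y''+qy=0$, shows simultaneously that $0\in\mathfrak{A}_r(\Sigma_{\Sl+})\cap\mathfrak{A}_r(\Sigma_{\Sl-})$ (so $\int|t|^{-2}\,d\Sigma_{\Sl\pm}(t)<\infty$, which forces the density $\Sigma'_{\Sl\pm}$ to vanish sufficiently fast at $t=0$) and that \eqref{e fr1=fr1} holds at $\la=0$. Part 3) of Theorem \ref{t s p} then delivers $0\in\sigma_p(A)$ with geometric multiplicity one. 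For (iv), one verifies that \eqref{e frj<inf r} fails at $j=2$, forcing the algebraic multiplicity at $0$ to equal one. The main obstacle is exactly this zero-energy analysis---quantifying the vanishing rate of $\Sigma'_{\Sl\pm}$ near the origin---performed in \cite[Section 6.1]{KarKos08}.
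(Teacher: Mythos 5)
Your overall route (realize $A$ as the model operator via Theorem \ref{t SLModel} and read off the point spectrum from Theorem \ref{t s p}, feeding in the explicit m-coefficients of \cite[Section 6.1]{KarKos08}) is the same as the paper's for parts (iii) and (iv); for (ii) and (iv) the paper in fact just cites \cite[Theorem 6.1]{KarKos08}, so your re-derivation of those from the model is more self-contained but also requires you to actually carry out the computations you only assert (for (ii), that $M_{\Sl+}(\lambda)=M_{\Sl-}(\lambda)$ has no non-real solutions reduces to $(-\lambda)^{1/2}=-\lambda^{1/2}$, which is impossible because both roots have positive real part off the cut --- this should be written out). For (i) the paper argues differently and more concretely: both Neumann m-coefficients equal $m_0(\lambda)=\lambda/(1+\lambda(-\lambda)^{1/2})$, the negative spectrum of $L$ is located via $m_{\Sl+}+m_{\Sl-}=2m_0\neq 0$ on $(-\infty,-1)\cup(-1,0)$ together with the simple pole of $m_0$ at $-1$, and then \cite[Remark 1.2]{CL89} gives $k_A=1$; your appeal to ``an explicit ground state and Sturm oscillation'' is not what \cite{KarKos08} supplies and would have to be substantiated separately.

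The genuine error is your claim that $\supp d\Sigma_{\Sl\pm}=\pm[0,+\infty)$ with $\mathfrak{A}_p(\Sigma_{\Sl\pm})=\emptyset$. Since $m_0$ has a pole at $\lambda=-1$, the measure $d\Sigma_{\Sl+}$ has an atom at $-1$ and $\supp d\Sigma_{\Sl+}=\{-1\}\cup[0,+\infty)$; symmetrically $\supp d\Sigma_{\Sl-}=\{1\}\cup(-\infty,0]$ with an atom at $+1$. This is not a cosmetic slip: it is inconsistent with your own part (i), because the single negative square of $[A\cdot,\cdot]$ is produced exactly by the negative eigenvalue $-1$ of $L$, which appears in the model precisely as these atoms; if both measures were atomless and supported in $\pm[0,+\infty)$ you would get $k_A=0$. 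Your conclusions for (ii)--(iv) nevertheless survive, because $-1\in\mathfrak{A}_0(\Sigma_{\Sl-})$ and $+1\in\mathfrak{A}_0(\Sigma_{\Sl+})$ (the a.c.\ densities $|t|^{5/2}/(\pi(1+|t|^3))$ are bounded away from zero near these points), so Theorem \ref{t s p} (1) still excludes $\pm 1$ from $\sigma_p(A)$ --- which is exactly how the paper handles all of $\R\setminus\{0\}$, namely by showing $(0,+\infty)\subset\mathfrak{A}_0(\Sigma_{\Sl+})$ and using evenness of $q$ for the negative half-line. You should replace the false support claim by this argument. Your treatment of $\lambda=0$ (finiteness of $\int t^{-2}\,d\Sigma_{\Sl\pm}$, validity of \eqref{e fr1=fr1} at $0$ since $m_0(0)=0$, and failure of \eqref{e frj<inf r} at $j=2$ because $\int_0^1 t^{-4}\,t^{5/2}\,dt=\infty$) is correct and matches how \cite{KarKos08} obtains (iv).
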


Statements (ii) and (iv) of Proposition \ref{p Adec} were proved in \cite[Theorem 6.1]{KarKos08}. Statement (i) was given in \cite[Remark 6.3]{KarKos08} with a very shortened proof.
For the sake of completeness, we give below the 
proofs of statements (i) and (iii).

\begin{proof}
Statement \textbf{(i)} follows from \cite[Remark 1.2]{CL89} and the fact that   the negative part of the spectrum of the operator $L$ consists of one simple eigenvalue at $\lambda_0 = -1$.

Let us prove that $\sigma(L) \cap (-\infty,0) = \{ -1\}$.
Consider the operators $L_{0}^\pm$ associated with the differential expression $\lexp [\cdot] $ and the Neumann problems $y'(\pm 0)=0 $ on $\R_\pm$,
and let $m_{\Sl\pm}$ be the corresponding Titchmarsh-Weyl m-coefficients, see e.g. \cite[formula (2.7)]{KarKosM09}. 

It follows from \cite[Lemma 6.2]{KarKos08} that both the Titchmarsh-Weyl m-coefficient $m_{\Sl\pm}$ are equal to the function 
$m_0 $ defined by 
\begin{equation}\label{VI_I_02}
m_0(\lambda)=\frac{\lambda}{1+\lambda(-\lambda)^{1/2}}, \qquad
\lambda \in \C \setminus \left( \{-1\} \cup [0,+\infty) \right),
\end{equation}
where $z^{1/2}$ denotes the branch of the complex root with a cut along the negative semi-axis $\R_-$ such that $(-1+i0)^{1/2}=i$. 
 
$\lambda_0 =-1$ is a pole of $m_0$, and therefore, an eigenvalue of both the operators $L_0^+$, $L_0^-$, and in turn an eigenvalue of $L$. 

The support of the spectral measure of $m_0$ is equal to $\{-1\} \cup [0,+\infty)$, see part (ii) of the proof of \cite[Theorem 6.1]{KarKos08}.
It is easy to see from the
standard  definition of Titchmarsh-Weyl m-coefficients
(or from \cite[Proposition 2.1]{DM91}) that 
$\lambda \in (-\infty,-1) \cup (-1,0)$ belongs to the spectrum of $L$ if and only if $m_{\Sl+} (\lambda) + m_{\Sl-} (\lambda) = 0$. But \cite[formula (6.2)]{KarKos08} implies that
$m_{\Sl+} (\lambda) + m_{\Sl-} (\lambda) = 2 m_0 (\lambda) \neq 0$ for all $\lambda \in (-\infty,-1) \cup (-1,0)$. 

Thus, the eigenvalue $\lambda_0 =-1$ is the only point of the spectrum of $L$ in $(-\infty,0)$.
$\lambda_0 =-1$ is a simple eigenvalue since $\lexp [\cdot ]$ is in the limit point case at $\pm \infty$
(see \cite[Theorem 5.3]{Weid87}).

\textbf{(iii)} It is proved in \cite[Theorem 6.1]{KarKos08} that $0$ is an eigenvalue of $A$
and that $\sigma(A) \subset \R$. Here we have to show   that $A$ has no eigenvalues in
$\R \setminus \{ 0 \}$.

The proof of \cite[Theorem 6.1 (ii)]{KarKos08} states that the Titchmarsh-Weyl m-coefficient
$M_{\Sl+}$ is equal to $m_0$, that its spectral measure $\Sigma_{\Sl+}$ is absolutely continuous on intervals  $[0,X]$, $X>0$, and that for $t>0$ we have
\[
\Sigma_{\Sl+}^\prime (t):=\frac{t^{5/2}}{\pi(1+t^3)}.
\]
Combining this with Theorem \ref{t s p} (1), we see that $(0,+\infty) \subset \mathfrak{A}_0 (\Sigma_{\Sl+})$,
and therefore $\sigma_p (A) \cap (0,+\infty) = \emptyset$. Since the potential of $L$ is even,
 we see that $\sigma_p (A) \cap (-\infty,0) = \emptyset$. This concludes the proof.
 \end{proof}
 
\begin{rem}
Actually, $\sigma(A)=\R$. This follows from Proposition \ref{p e and d} (i) and 
the fact that $M_{\Sl\pm} (\cdot) = \pm m_{\Sl\pm} (\pm \cdot)$. 
The fact that $A$ has no eigenvalues in $\R \setminus \{ 0 \}$ can also be easily obtained from  \cite[Section 5.7]{Tit58} or from \cite[Problem IX.4]{CodLev58Rus}.
\end{rem}

Combining Proposition \ref{p Adec} with \textbf{(p1)}
and \textbf{(p2)}, we will show that $q_A (z) = z $,
and that $q_A (z) = z $ contradicts  \textbf{(p3)}.
Indeed, since $L$ is not nonnegative, the polynomial $z$ is not a definitizing polynomial of the operator $A$.  So $p_A (z) \not \equiv z$, and therefore $q_A $ is nontrivial. $q_A $ has the degree equal to $k_A =1$ due to \textbf{(p1)}.
Since the polynomial $q_A$ is of minimal degree,
 Proposition \ref{p Adec} (ii) implies that $q_A$ has no non-real zeros, see \cite[p.11, the second paragraph]{Lan82} or \cite[p. 38, the last paragraph]{CL89}. (Note also that in our case $p_A$ is a definitizing polynomial of minimal degree since $0$ is a critical point of $A$.)
By Proposition \ref{p Adec} (iii), $A$ has no eigenvalues in
$\R \setminus \{ 0 \}$. Therefore, statement \textbf{(p2)} implies that $q_A$ has no zeros in
$\R \setminus \{ 0 \}$. 
Summarizing, we see that $q_A (z) = z$ and $p_A (z) = z^3$. Proposition \ref{p Adec} (iv) states that $0$ is a simple eigenvalue. This fact contradicts \textbf{(p3)}.

The equality $\dim \L_0 = k_A+ k_A^0 $ from \cite[Proposition 1.5]{CL89} is not valid for the operator $A$ defined by
\eqref{e VI_1_01}. 

Namely, \cite[Proposition 1.5]{CL89} states that there exists an invariant under $A$ subspace $\L_0$ of dimension  $\dim \L_0 = k_A+ k_A^0$, where $k_A^0$ is the dimension of the isotropic part of the root subspace $\Iroot_0 (A)$ with respect to the sesquilinear form $[ A \cdot, \cdot]$. For the operator $A$ defined by
\eqref{e VI_1_01}, 
statements (i) and (iv) of Proposition \ref{p Adec} imply that $k_A =1$ and $k_A^0 =1 $, respectively. 
So $\L_0$ is a two-dimensional invariant
subspace of $A$. All the root subspaces of the restriction $A\uph
\L_0 $ are root subspaces of $A$, and therefore
Proposition \ref{p Adec} (iii)-(iv) implies 
$\dim \L_0 \leq 1$. This contradicts $\dim \L_0 = k_A+ k_A^0 = 2$.

\begin{rem}
From the author's point of view, the statement 'the
inner product $[A \cdot , \cdot ] $ has $k_A^{\prime \prime}$ negative squares on $\Iroot_0 (A)$' in the proof of \cite[Proposition 1.5]{CL89} is not valid for the operator $A$ defined by \eqref{e VI_1_01}, since in this case $\Iroot_0 (A) = \ker A$, but $k_A^{\prime \prime} =1$.
\end{rem}

\appendix

\renewcommand{\theequation}{A-\arabic{equation}}
  \setcounter{equation}{0}  

\section{Appendix: Boundary triplets for symmetric operators}\label{s a}

In this section we recall necessary definitions and facts from the theory of  boundary triplets and abstract
Weyl functions following \cite{Koch75,GG91,DM91,DM95}.

Let $H$, $\H$, $\H_1$, and $\H_2$ be complex Hilbert spaces. By $[\H_1, \H_2]$ we denote   
the set of bounded linear operators acting from the space $\H_1$ to the space $\H_2$ and defined on all the space $\H_1$. If $\H_1=\H_2$, we write 
$[\H_1]$ instead of $[\H_1, \H_1]$.

Let $S$ be a
closed densely defined symmetric operator in $H$ with equal deficiency
indices \ $n_+ (S) = n_- (S)=n $ \ (by definition,
$n_{\pm}(S):=\dim \mathfrak{N}_{\pm i}(S)$, where
$\mathfrak{N}_\lambda(S):=\ker(S^* -\lambda I)$).

\begin{defn}\label{dI.3}
A triplet $\Pi = \{\H, \Gamma_0,\Gamma_1\}$ consisting of an
auxiliary Hilbert space $\H$ and linear mappings $\Gamma_j:
\dom(S^*) \longrightarrow \H$, \ $(j=0,\ 1)$, \ is called a
\emph{boundary triplet for} $S^*$ if the following two conditions
are satisfied:
\begin{description}
\item[(i)] \ $(S^*f,g)_H-(f,S^*g)_H = (\Gamma _1 f,\Gamma
_0g)_\H -        (\Gamma_0 f,\Gamma _1 g)_\H$, \qquad  $f,\ g
\in \dom(S^*)$;
\item[(ii)] the linear mapping $ \Gamma =
\{\Gamma_0 f,\Gamma_1 f\} \ : \dom(S^*) \longrightarrow \H \oplus
\H $ is surjective.
\end{description}
\end{defn}

In the rest of this section we assume that the Hilbert space $H$ is separable.
Then the existence of a boundary triplet for $S^*$
is equivalent to $n_+ (S) = n_- (S) $.

The mappings $\Gamma_0$ and $\Gamma_1$ naturally induce two
extensions $S_0 $ and $S_1$ of $S$ given
by
\[ S_j := S^* \, \upharpoonright \, \dom (S_j), \qquad \dom (S_j)
= \ker \Gamma_j, \quad j=0,1.
\]
It turns out that $S_0 $ and $S_1$ are self-adjoint operators in
$H$,\ \ $S_j^*=S_j, \ j=0,1$.

The \emph{$\gamma$-field} of the operator $S$ corresponding to the
boundary triplet $\Pi$ is the operator function $\gamma (\cdot) :
\rho(S_0) \to [\H, \mathfrak{N}_\lambda(S)]$ defined
by $\gamma (\lambda) := (\Gamma_0 \upharpoonright
\mathfrak{N}_\lambda(S))^{-1}$. The function $\gamma $ is
well-defined and holomorphic on $\rho (S_0)$.
\begin{defn}[\cite{DM91,DM95}]\label{dI.6}
Let $\Pi=\{\H, \Gamma_0,\Gamma_1\}$ be a boundary triplet for the
operator $S^*$. The operator-valued function $M(\cdot) : \rho
(S_0) \to [\H]$ defined by
$$
M(\lambda) := \Gamma_1 \gamma(\lambda),\qquad \lambda\in\rho
(S_0),
$$
is called \emph{the Weyl function} of $S$ corresponding to the
boundary triplet $\Pi$.
\end{defn}
Note that the Weyl function $M$ is holomorphic on $\rho (S_0)$ and is an (operator-valued) $(R)$-function obeying $0\in \rho(\im(M(i)))$.

\quad \\

\textit{The author expresses his gratitude to Paul Binding, Branko \'Curgus, Aleksey Kostenko, and
Cornelis van der Mee for useful discussions. The author would like to thank the anonymous referees
for careful reading of the paper and for numerous  suggestions on improving it. The author would like to thank the organizers of the conference IWOTA 2008 for 
the hospitality of the College of William and Mary.
This work was partly supported by the PIMS Postdoctoral Fellowship at the University of Calgary.}

\quad\\
I. M. Karabash,
\\Department of Mathematics and Statistics, University of Calgary, 2500 University Drive NW Calgary T2N 1N4, Alberta, Canada \\
and \\
Department of PDE, Institute of Applied Mathematics and Mechanics, R. Luxemburg str. 74, Donetsk 83114, Ukraine\\
e-mail: karabashi@yahoo.com, karabashi@mail.ru, karabash@math.ucalgary.ca
\end{document}